%% file: paper.tex
\documentclass[final]{siamart251216}

\usepackage{amssymb,amsmath,amsfonts,amscd,verbatim}
\usepackage{color,graphicx}
\usepackage{algorithm,algorithmic}
\usepackage{upgreek,comment,url}
\usepackage{float}
\usepackage{rotating,fancyhdr}
\usepackage{booktabs}

\usepackage{mystylefile}

\title{Revisiting column subset selection\\ through the lens of submodularity\thanks{The work was funded, in part,
by the National Science Foundation through awards DMS-2411198 and CCF-2209510, and
by the Department of Energy through awards DE-SC0023188 and DE-SC0026310.}}

\author{Ilse C.F.\ Ipsen\thanks{Department of Mathematics, North Carolina State University, Raleigh, NC 27695, USA. \email{ipsen@ncsu.edu}, \email{asaibab@ncsu.edu}} \and Arvind K.\ Saibaba\footnotemark[2] }

\begin{document}
\maketitle

\begin{abstract}
The problem is to select $k$ columns with maximal volume from a real $m\times n$ matrix~$\mx$. We show
that the logarithm of the volume is
a set submodular function on columns of $\mx$, and
for full column-rank matrices $\mx$ with sufficiently large singular values, it is a non-negative non-decreasing function. As a consequence, 
traditional Businger-Golub QR with column pivoting is a greedy algorithm, with a relative
error of at most 37 percent. In contrast, Gu-Eisenstat strong rank-revealing
QR is a 1-interchange algorithm, with a relative
error of at most 50 percent.
The higher accuracy, under this metric, of the simple QR with column pivoting  
confirms its well known effectiveness in practice.
For general, possibly rank-deficient matrices, we derive probabilistic bounds for the absolute error based on a smoothed analysis.
The above analyses are extended to finding $k\times k$ submatrices of maximal volume in symmetric positive-definite
matrices.
\end{abstract}

\begin{keywords}
Set submodular functions, greedy algorithm, interchange algorithm, volume, pivoted QR factorization, pivoted Cholesky factorization, smoothed analysis
\end{keywords}

\begin{MSCcodes}
15A12, 15A15, 15A18, 15A23, 65F20, 90B80, 90C59
\end{MSCcodes}

\input{sec1}
\input{sec2}

\input{sec3}

\input{sec4}

\input{sec5}

\input{sec6}

\input{sec7}
\subsection*{Acknowledgments}
The authors would like Daniel Kressner for raising an issue with an earlier version of this manuscript. AKS would also like to thank Hugo D\'iaz for useful conversations.
\input{sec8}
\bibliography{ssbib}
\bibliographystyle{siam}

\end{document}

%% file: sec1.tex

\section{Introduction}
Many problems in 
combinatorial optimization \cite{Nemhauser1978}, machine learning \cite{DasK2018,Mirza2013}, 
management science \cite{Corne1977}, determinantal point processes \cite{ChenZhang2018,Gillen12},
matroids \cite{Cali11}, sensor placement \cite{Hash2020,Sham2010}, and D-optimal design \cite{Rob89}
can be expressed as the maximization 
of a submodular function under cardinality constraints. 
Such maximization problems can be NP-complete,
such as the account location problem \cite[page 791]{Corne1977}, and
heuristics have been developed, including greedy 
and interchange algorithms that come with
relative error bounds \cite{Nemhauser1978}.

To take advantage of these error bounds in the
context of selecting $k$ 'good' columns from a
matrix $\mx\in\rmn$, we
express column subset selection in terms of volume, and show that the log-volume (logarithm of the volume) is 
a set submodular function on column submatrices
of $\mx$.

For matrices with full column-rank and sufficiently large singular values, 
we show that traditional Businger-Golub QR with column pivoting~\cite{BusGo65,GoVL13} is a greedy algorithm 
for log-volume maximization, with a relative
error of at most 37 percent.
In contrast, Gu-Eisenstat strong rank-revealing
QR~\cite{GuEis96} is a 1-interchange algorithm
for log-volume maximization, with a relative
error of at most 50 percent---however, only in theory,
when run without a relaxation factor. 
The higher accuracy, under this metric, of the simple QR with column pivoting  
confirms its well known empirical effectiveness,
and competitiveness with more sophisticated 
pivoting strategies.

For general, possibly rank-deficient matrices, however
the log-volume can be negative and decreasing, hence the error
bounds in \cite{Nemhauser1978} do not apply.
As an alternative, we derive absolute error bounds via a smoothed analysis, in which the input matrix undergoes a small perturbation so that the perturbed matrix has full column-rank with high probability. Although the algorithms are applied to the perturbed matrix, our analysis presents error bounds for
the original matrix.

\subsection{Contributions} 
We express existing algorithms for column subset selection and submatrix selection
as greedy and 1-interchange algorithms for submodular
functions, and derive error bounds on their accuracy.
\begin{enumerate}
\item We introduce variants of 1-interchange algorithms for 
set submodular functions with additive and multiplicative relaxation factors,
and derive error bounds for their accuracy (Theorem~\ref{t_53}).
\item We show that the log-volume of a general matrix is a 
set submodular function (Theorem~\ref{t_logvol}).
\item We show that Businger-Golub QR with column 
pivoting \cite{BusGo65,GoVL13}
is a greedy algorithm for log-volume maximization
for matrices that have full column-rank (Theorem~\ref{t_2} and Corollary~\ref{c_2}), 
with a relative error of at most 37 percent. 
\item We show that Gu-Eisenstat strong 
rank-revealing QR 
\cite{GuEis96} is a 1-interchange algorithm for
log-volume maximization for matrices that have full column rank (Theorem~\ref{t_3},
Corollary~\ref{c_3} and Theorem~\ref{c_54}), with a relative error of at most 50 percent. 

According to this metric, the traditional 
Businger-Golub QR with column pivoting can be more
accurate than the strong rank-revealing Gu-Eisenstat
QR factorization. This explains the accuracy
of QR with column pivoting frequently observed in practice.

\item For general matrices, which may be rank-deficient, we derive absolute error bounds for Businger-Golub QR with column pivoting (Theorem~\ref{thm:smoothed}) and Gu-Eisenstat 
strong rank-revealing QR (Theorem~\ref{thm:smoothed2}) via a smoothed analysis.  Although the algorithms are applied to the perturbed matrix, our analysis presents error bounds for
the original matrix. 

\item For symmetric positive definite matrices, we show that pivoted Cholesky factorizations 
\cite{GoVL13,gu2004strong,Higham2002} and strong rank-revealing Cholesky factorization~\cite{gu2004strong}
are, respectively, greedy and 1-interchange algorithms for log-volume maximization of principal submatrices,
and derive relative (Theorem~\ref{thm:pivcholesky} and Corollary~\ref{c_100}) and absolute 
(Theorem~\ref{thm:srrcholesky}) error bounds. 
A smoothed analysis extends the error bounds to positive semidefinite matrices 
(Theorems~\ref{thm:smoothed_psd} and~\ref{thm:smoothed_psd2}).
\end{enumerate}

\subsection{Overview}
After reviewing submodular functions and their algorithms
(Section~\ref{s_submod}), we discuss submodular
functions for column subset selection (Section~\ref{s_3}). 
Then we express and analyze the Businger-Golub QR factorization as a greedy algorithm (Section~\ref{s_4}); the Gu-Eisenstat QR factorization as a 1-interchange algorithm (Section~\ref{sec:geqr}); and pivoted Cholesky factorizations 
as greedy and 1-interchange algorithms for submatrix
selection (Section~\ref{s_6}); followed by 
possible directions of future work
(Section~\ref{s_conc}). An appendix presents the proofs for the smoothed analysis.

\subsection{Notation}
Let $\mx\in\rmn$ be  matrix with 
singular values 
$\sigma_1(\mx)\geq \cdots\geq\sigma_{\min\{m,n\}}(\mx)\geq 0$.
For some $0<k\leq \rank(\mx)$,
let $\mc\in\real^{m\times k}$ 
be a submatrix of $k$ columns of $\mx$, with singular values 
\begin{equation}\label{e_so}
\sigma_1(\mc)\geq \cdots\geq\sigma_k(\mc)\geq 0.
\end{equation}
Singular value interlacing \cite[Corollary 7.3.6 and 7.3.P44]{HoJoI}
implies
\begin{equation}\label{e_inter}
\sigma_{\min\{m,n\}-k+j}(\ma)\leq \sigma_j(\mc)\leq
    \sigma_j(\ma),\qquad 1\leq j\leq k.
\end{equation}
For matrices $\mc, \md \in \real^{m\times n}$, Weyl's inequality~\cite[(7.3.15)]{HoJoI} for singular values implies
\begin{equation}\label{eqn:weyl}
\max_{1 \le j \le \min\{m,n\}}|\sigma_j(\mc) - \sigma_j(\md)| \le \|\mc-\md\|_2. 
\end{equation}

The two-norm condition number with respect to left inversion of 
$\mx\in\rmn$ with $\rank(\mx)=n$ is
$\kappa_2(\mx) \equiv  \sigma_1(\mx)/\sigma_n(\mx)$.

The eigenvalues of a symmetric positive semi-definite matrix
$\ma\in\rnn$ are $\lambda_1(\ma)\geq \cdots\geq \lambda_n(\ma)\geq 0$.
For symmetric positive semi-definite matrices $\ma, \mb\in\rmm$, the Loewner inequality $\ma\preceq \mb$ says that
$\mb-\ma$ is positive semi-definite.

For symmetric matrices $\ma,\me \in \real^{n\times n}$, Weyl's monotonicity theorem for eigenvalues~\cite[Corollary 4.3.15]{HoJoI} 
implies
\begin{equation}\label{eqn:weyl2}
    \lambda_i(\ma) + \lambda_n(\me) \le \lambda_i(\ma + \me) \le \lambda_i(\ma) + \lambda_1(\me), \qquad 1 \le i \le n.
\end{equation}

We denote by $\mi_n\equiv\begin{bmatrix} \ve_1& \cdots &\ve_n\end{bmatrix}\in\rnn$ the identity matrix, and by $\ve_j\in\rn$ its columns for $1 \le j \le n$. The norm $\|\cdot\|$ is the Euclidean two-norm.

\begin{definition}\label{d_set}
Let $\mx\in\rmn$ and $\mathcal{S}=\{s_1,\ldots,s_k\}
\subset\{1,\ldots, n\}$ for some $1\leq k\leq n$. Then
$\mx_{\mathcal{S}}=\begin{bmatrix}\vx_{s_1} &\ldots & \vx_{s_k}\end{bmatrix}\in\real^{m\times k}$ is a 
submatrix of $\mx$ with columns indexed by  $\mathcal{S}$.

If $\ma\in\rnn$, then $\ma[\mathcal{S}]\in\real^{k\times k}$ is the principal submatrix of $\ma$ indexed by
rows and columns in $\mathcal{S}$.
\end{definition}

%% file: sec2.tex
\section{Submodular functions}\label{s_submod}
After presenting the problem of maximization under cardinality constraints, and the definition of set submodular functions (Definition~\ref{d_1}), we review greedy algorithms and interchange algorithms
(Section~\ref{s_subalg}). We introduce and analyze a relaxed
version of the interchange algorithms (Section~\ref{s_relaxed}),
and review existing work (Section~\ref{s_2rel}).

\paragraph{Maximization under cardinality constraints}
Given a finite set $\mathcal{X}$, a real valued function $f$ on the subsets of $\mathcal{X}$,
and an integer $0<k<|\mathcal{X}|$, we want to maximize $f$ over all subsets $\mathcal{S}\subset\mathcal{X}$ of cardinality $k$,
that is,
\begin{equation*}
\max_{\mathcal{S}\subset\mathcal{X}}\{f(\mathcal{S}):\ |\mathcal{S}|=k\}.
\end{equation*}

The notion of \emph{set submodularity}, defined below, is `in some sense a combinatorial analogue
of concavity' \cite[Section 1]{Nemhauser1978}.

\begin{definition}[(1.4), (1.5) and Proposition 2.1 in \cite{Nemhauser1978}]\label{d_1}
Given a finite set~$\mathcal{X}$, a real-valued function $f$ on the  subsets of $\mathcal{X}$
is a \emph{set submodular function} if
\begin{equation*}
f(\mathcal{T}\cup \{x\}) - f(\mathcal{T})\leq f(\mathcal{S}\cup\{x\})-f(\mathcal{S})\quad
\text{for all} \quad \mathcal{S}\subset\mathcal{T}\subset\mathcal{X},\quad
x\in\mathcal{X}-\mathcal{T}.
\end{equation*}
Furthermore, a set submodular function $f$ is \emph{non-decreasing} if
\begin{equation*}
f(\mathcal{T}\cup \{x\}) -f(\mathcal{T})\geq 0\quad \text{for all}\quad \mathcal{T}\subset\mathcal{X}, \quad x\in\mathcal{X}-\mathcal{T}. 
\end{equation*}
\end{definition}

That is, adding a new element $x$ to a smaller set $\mathcal{S}$ is more beneficial than adding~$x$ to a larger set $\mathcal{T}$; and adding a new element $x$ can possibly increase the function, but cannot decrease it. 
For instance, linear functions are submodular.

\subsection{Algorithms for set submodularity}\label{s_subalg}
We review two maximization algorithms: 
a greedy algorithm (Algorithm~\ref{alg_proto}) and its accuracy (Theorem~\ref{t_ng}); 
and a 1-interchange algorithm (Algorithm~\ref{alg_proto2}) and its accuracy (Theorem~\ref{t_ng2}).

The greedy algorithm in \cite[Section 4]{Nemhauser1978} is more general than Algorithm~\ref{alg_proto} 
because it applies to general submodular functions $f$ that are not necessarily non-decreasing. However, here we need a non-negative
non-decreasing function $f$ so that we can set $f(\emptyset)=0$.

\begin{algorithm}[!ht]
\caption{Greedy algorithm for non-negative non-decreasing set submodular functions~$f$ 
\cite[Algorithm 1]{DasK2018}, \cite[Section 4]{Nemhauser1978}}\label{alg_proto}
\begin{algorithmic}[1]
\REQUIRE Finite set $\mathcal{X}$, integer $0<k<|\mathcal{X}|$,\\
$\qquad$ Non-negative non-decreasing set submodular function $f$ with $f(\emptyset)=0$

\STATE Initialize $\mathcal{S}_0\equiv\emptyset$
\FOR{$i=0:k-1$}
\STATE Find $x_{i+1}\in\mathcal{X}-\mathcal{S}_{i}$ that maximizes $f(\mathcal{S}_i\cup\{x_{i+1}\}) $
\STATE Set $\mathcal{S}_{i+1}\equiv \mathcal{S}_i\cup\{x_{i+1}\}$
\ENDFOR
\RETURN $\mathcal{S}_k$
\end{algorithmic}
\end{algorithm}

\begin{theorem}[Theorem 1 in \cite{Corne1977}, Theorem 5 in \cite{DasK2018}, Proposition 4.3 in \cite{Nemhauser1978}]\label{t_ng}
Let $f$ be a non-negative non-decreasing set submodular function on a finite set
$\mathcal{X}$ with $f(\emptyset)=0$;  $0<k<|\mathcal{X}|$ an integer; and~$\mathcal{S}_k^*$ a set that maximizes
$f(\mathcal{S})$ over all subsets $\mathcal{S}\subset\mathcal{X}$ with $|\mathcal{S}|=k$. 

Then the set $\mathcal{S}_k$ returned by Algorithm~\ref{alg_proto} satisfies 
\begin{equation*}
0\leq f(\mathcal{S}_k^*)-f(\mathcal{S}_k)
\leq (1-\tfrac{1}{k})^k\,f(\mathcal{S}_k^*),
\end{equation*}
where $(1-\tfrac{1}{k})^k\leq \tfrac{1}{e} <.37$.
\end{theorem}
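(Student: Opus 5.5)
The plan is the classical Nemhauser--Wolsey--Fisher argument: bound the optimal value by the greedy value plus $k$ times the next greedy gain, then unroll the resulting recursion. Write $\mathcal{S}^*\equiv\mathcal{S}_k^*$ and $\delta_{i+1}\equiv f(\mathcal{S}_{i+1})-f(\mathcal{S}_i)$ for $0\leq i\leq k-1$. The lower bound $0\leq f(\mathcal{S}_k^*)-f(\mathcal{S}_k)$ is immediate, since $\mathcal{S}_k$ has cardinality $k$ and $\mathcal{S}_k^*$ is a maximizer.

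\textbf{Key inequality.} For each $0\leq i\leq k-1$ we show
\begin{equation*}
f(\mathcal{S}^*)\leq f(\mathcal{S}_i)+k\,\delta_{i+1}.
\end{equation*}

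\emph{Monotonicity step.} Since $f$ is non-decreasing,
$f(\mathcal{S}^*)\leq f(\mathcal{S}^*\cup\mathcal{S}_i)$.

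\emph{Telescoping step.} Enumerate $\mathcal{S}^*-\mathcal{S}_i=\{y_1,\ldots,y_r\}$ with $r\leq k$, and add the $y_j$ to $\mathcal{S}_i$ one at a time. This telescopes
$f(\mathcal{S}^*\cup\mathcal{S}_i)-f(\mathcal{S}_i)$ into $r$ marginal gains. Each gain is taken at a superset of $\mathcal{S}_i$, so submodularity (Definition~\ref{d_1}) bounds it above by $f(\mathcal{S}_i\cup\{y_j\})-f(\mathcal{S}_i)$.

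\emph{Greedy step.} By the greedy choice in Algorithm~\ref{alg_proto}, each such single-element gain is at most $\delta_{i+1}$. Summing the $r\leq k$ terms gives the key inequality. We use $r\leq k$ together with $\delta_{i+1}\geq 0$, which holds by monotonicity.

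\textbf{Recursion.} Set $\Delta_i\equiv f(\mathcal{S}^*)-f(\mathcal{S}_i)$. The key inequality reads $\Delta_i\leq k(\Delta_i-\Delta_{i+1})$, that is,
\begin{equation*}
\Delta_{i+1}\leq (1-\tfrac1k)\Delta_i.
\end{equation*}
Iterating from $i=0$ to $i=k-1$ gives $\Delta_k\leq(1-\tfrac1k)^k\Delta_0$. Because $f(\emptyset)=0$, we have $\Delta_0=f(\mathcal{S}_k^*)$, which is the claimed upper bound.

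\textbf{Final estimate.} The elementary inequality $1-x\leq e^{-x}$ gives $(1-\tfrac1k)^k\leq e^{-1}\approx 0.3679<.37$.

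The main obstacle is the submodularity step inside the telescoping sum. Definition~\ref{d_1} is stated for single-element additions to nested sets, so we need the intermediate sets $\mathcal{S}_i\cup\{y_1,\ldots,y_{j-1}\}$ to be supersets of $\mathcal{S}_i$ not containing $y_j$. This holds because the $y_j$ are distinct and lie outside $\mathcal{S}_i$.

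The greedy bound also needs each $y_j\in\mathcal{X}-\mathcal{S}_i$, which is true since $y_j\in\mathcal{S}^*-\mathcal{S}_i$. If $\mathcal{S}^*\subset\mathcal{S}_i$, the sum is empty and the key inequality holds trivially.
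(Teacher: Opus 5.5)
Your proof is correct and is the standard Nemhauser--Wolsey--Fisher argument. The paper gives no proof of its own here and imports the result from \cite{Nemhauser1978,Corne1977,DasK2018}, whose proof is exactly your key inequality $f(\mathcal{S}_k^*)\leq f(\mathcal{S}_i)+k\,\delta_{i+1}$ followed by unrolling the recursion; the paper uses the same inequality, in the form \eqref{eqn:decomp2}, in its proof of Theorem~\ref{t_53}.
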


The upper bound in Theorem~\ref{t_ng} is tight for the uncapacitated location problem \cite[Theorem 3]{Corne1977}, 
\cite[Theorem 4.1(c)]{Nemhauser1978}. For certain classes of problems,~\cite[Theorem 4.2]{nemhauser1978best} shows that the upper bound cannot be improved by an algorithm with a polynomial number of function evaluations.


\begin{algorithm}[!ht]
\caption{$1$-interchange algorithm \cite[Section 5]{Nemhauser1978}}\label{alg_proto2}
\begin{algorithmic}[1]
\REQUIRE Finite set $\mathcal{X}$, subset $\mathcal{S}^{(0)}\subset\mathcal{X}$ with $|\mathcal{S}^{(0)}|=k$\\
$\qquad$ Non-negative non-decreasing set submodular function $f$ with $f(\emptyset)=0$
\STATE $i=0$\\
\WHILE{exist $x_{i+1}\in\mathcal{S}^{(i)}$ and $y_{i+1}\in\mathcal{X}-\mathcal{S}^{(i)}$\\ with
$f((\mathcal{S}^{(i)}-\{x_{i+1}\})\cup\{y_{i+1}\})>f(\mathcal{S}^{(i)})$}
\STATE $\mathcal{S}^{(i+1)}=(\mathcal{S}^{(i)}-\{x_{i+1}\})\cup\{y_{i+1}\}$
\STATE $i= i+1$
\ENDWHILE
\STATE $\mathcal{S}_k\equiv \mathcal{S}^{(i-1)}$
\RETURN $\mathcal{S}_k$
\end{algorithmic}
\end{algorithm}

\begin{theorem}[Theorem 5 in \cite{Corne1977}, Theorem 5.1 in \cite{Nemhauser1978}]\label{t_ng2}
Let $f$ be a non-negative non-decreasing set submodular function on a finite set
$\mathcal{X}$ with $f(\emptyset)=0$;  $0<k<|\mathcal{X}|$ an integer; 
and~$\mathcal{S}_k^*$ a set that maximizes
$f(\mathcal{S})$ over all subsets $\mathcal{S}\subset\mathcal{X}$ with $|\mathcal{S}|=k$.

Then the set $\mathcal{S}_k$ returned by Algorithm~\ref{alg_proto2} satisfies 
\begin{equation*}
0\leq f(\mathcal{S}_k^*)-f(\mathcal{S}_k)
\leq \frac{k-1}{2k-1}\,f(\mathcal{S}_k^*),
\end{equation*}
where $\frac{k-1}{2k-1}<\frac{1}{2}$.
\end{theorem}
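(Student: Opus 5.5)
The plan is to prove Theorem~\ref{t_ng2} by the classical local-optimality argument of Nemhauser, Wolsey and Fisher. The lower bound $0\le f(\mathcal{S}_k^*)-f(\mathcal{S}_k)$ is immediate, since $\mathcal{S}_k^*$ maximizes $f$ over all sets of cardinality $k$ and $|\mathcal{S}_k|=k$. The algorithm terminates because $f$ strictly increases at every step and there are finitely many $k$-subsets. At termination, $\mathcal{S}\equiv\mathcal{S}_k$ is a local maximum: for every $x\in\mathcal{S}$ and $y\in\mathcal{X}-\mathcal{S}$,
\begin{equation*}
f((\mathcal{S}-\{x\})\cup\{y\})\le f(\mathcal{S}).
\end{equation*}
Write $\mathcal{S}^*=\mathcal{S}_k^*$, $\mathcal{A}=\mathcal{S}-\mathcal{S}^*=\{a_1,\dots,a_r\}$ and $\mathcal{B}=\mathcal{S}^*-\mathcal{S}=\{b_1,\dots,b_r\}$, so that $|\mathcal{A}|=|\mathcal{B}|=r\le k$. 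If $r=0$ there is nothing to prove, so assume $r\ge 1$ and pair $a_i$ with $b_i$.

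The first step is an upper bound on $f(\mathcal{S}^*)$. By monotonicity, $f(\mathcal{S}^*)\le f(\mathcal{S}\cup\mathcal{B})$. Adding the elements of $\mathcal{B}$ one at a time and applying submodularity to each increment gives
\begin{equation*}
f(\mathcal{S}^*)\le f(\mathcal{S})+\sum_{i=1}^r\bigl[f(\mathcal{S}\cup\{b_i\})-f(\mathcal{S})\bigr].
\end{equation*}

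The second step converts local optimality into a bound on each increment. By submodularity, with $\mathcal{S}-\{a_i\}\subset\mathcal{S}$,
\begin{equation*}
f(\mathcal{S}\cup\{b_i\})-f(\mathcal{S})\le f((\mathcal{S}-\{a_i\})\cup\{b_i\})-f(\mathcal{S}-\{a_i\})\le f(\mathcal{S})-f(\mathcal{S}-\{a_i\}).
\end{equation*}
The second inequality is local optimality. Summing over $i$, it remains to bound $\sum_i[f(\mathcal{S})-f(\mathcal{S}-\{a_i\})]$.

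The third step bounds that sum. Order $\mathcal{S}$ so that the elements of $\mathcal{S}\cap\mathcal{S}^*$ come first and $a_1,\dots,a_r$ come last. Each $a_i$'s marginal contribution to the full set $\mathcal{S}$ is at most its marginal contribution when it is added in this order, by submodularity. The sum therefore telescopes to at most $f(\mathcal{S})-f(\mathcal{S}\cap\mathcal{S}^*)\le f(\mathcal{S})$, using non-negativity. Combining the three steps yields only $f(\mathcal{S}^*)\le 2f(\mathcal{S})$, which is the bound $\tfrac12$ and not yet $\frac{k-1}{2k-1}$.

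The main obstacle is sharpening this to $\frac{k-1}{2k-1}$, which requires a more careful accounting of the common part $\mathcal{C}=\mathcal{S}\cap\mathcal{S}^*$ and of $f(\emptyset)=0$. My plan is to keep the term $f(\mathcal{C})$ rather than discard it, and to also apply the argument of the first step with the marginal gains of $\mathcal{B}$ taken relative to $\mathcal{C}$. Averaging the exchange inequalities over all $r^2$ pairs $(a_i,b_j)$, not just the matched pairs, should give an inequality of the form
\begin{equation*}
r\,f(\mathcal{S}^*)\le (2r-1)\,f(\mathcal{S})+(\text{terms in } f(\mathcal{C})\text{ that are nonpositive}).
\end{equation*}
From $f(\mathcal{S}^*)\le\frac{2r-1}{r}f(\mathcal{S})$ it follows that $f(\mathcal{S}^*)-f(\mathcal{S})\le\frac{r-1}{2r-1}f(\mathcal{S}^*)$, and this quantity is increasing in $r\le k$. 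If the averaging does not produce exactly $2r-1$, I would fall back on the argument of \cite[Theorem 5.1]{Nemhauser1978}, which may be cited directly since the statement is quoted from there. Finally, $\frac{k-1}{2k-1}<\frac12$ is immediate.
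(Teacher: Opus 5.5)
There is a genuine gap: the sharpening from $\tfrac12$ to $\frac{k-1}{2k-1}$, which is the actual content of the theorem, is never carried out. Your first three steps are correct, but they give only $f(\mathcal{S}^*)\le 2f(\mathcal{S})-f(\mathcal{C})$. The final step is left as a hope, with a fallback to citing \cite{Nemhauser1978}, and citing the result is not a proof of it.

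The mechanism you propose would also fail as stated. Write $\gamma(b)\equiv f(\mathcal{S}\cup\{b\})-f(\mathcal{S})$ and $\delta(a)\equiv f(\mathcal{S})-f(\mathcal{S}-\{a\})$. Your exchange inequalities from the second step say $\gamma(b_j)\le\delta(a_i)$. Averaging them over all $r^2$ pairs gives $\sum_j\gamma(b_j)\le\sum_i\delta(a_i)$, which is exactly the matched-pair inequality, since both sides separate. The factor is lost earlier, in your first step. There you take the marginal gains of $\mathcal{B}$ relative to the base set $\mathcal{S}$ instead of $\mathcal{S}-\{a\}$, and that costs exactly one extra $\delta(a)$.

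The missing idea is to telescope from the smaller base set $\mathcal{S}-\{x\}$. This is what the paper does, in its proof of Theorem~\ref{t_53} specialized to $\theta_a=0$, following \cite[Theorem 5.1]{Nemhauser1978}. The paper orders $\mathcal{S}$ greedily so that the increments satisfy $\zeta_1\ge\cdots\ge\zeta_k$, takes $x=s_k$, and gets $\zeta_k\le f(\mathcal{S})/k$ from $f(\emptyset)=0$. Local optimality bounds every increment $f((\mathcal{S}-\{x\})\cup\{y\})-f(\mathcal{S}-\{x\})$, for $y\in\mathcal{S}^*-(\mathcal{S}-\{x\})$, by $\zeta_k$. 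There are at most $k$ such increments, so $f(\mathcal{S}^*)\le f(\mathcal{S})-\zeta_k+k\zeta_k\le\frac{2k-1}{k}f(\mathcal{S})$.

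Your own setup can be repaired the same way. For each $a\in\mathcal{A}$ you have $\mathcal{S}^*\subset(\mathcal{S}-\{a\})\cup\mathcal{B}$. Monotonicity, submodularity and local optimality then give $f(\mathcal{S}^*)\le f(\mathcal{S}-\{a\})+r\,\delta(a)=f(\mathcal{S})+(r-1)\,\delta(a)$. Now average over $a\in\mathcal{A}$; this is where an average over all pairs genuinely helps. Combined with your third step, $\sum_{a\in\mathcal{A}}\delta(a)\le f(\mathcal{S})-f(\mathcal{C})$, this yields $r\,f(\mathcal{S}^*)\le(2r-1)f(\mathcal{S})-(r-1)f(\mathcal{C})$. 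That is exactly the inequality you conjectured. Your observation that $\frac{r-1}{2r-1}$ is increasing in $r\le k$ then finishes the proof, with a slightly sharper $r$-dependent constant than the paper's.
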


The upper bound is tight for the uncapacitated location problem \cite[Theorem 5.1(b)]{Nemhauser1978}.

\subsection{Algorithm~\ref{alg_proto2} with relaxation
factors}\label{s_relaxed}
We adapt the analysis of~\cite[Section 5]{Nemhauser1978} to the case where interchanges take place only if the objective function improves by an additive relaxation
factor $\theta_a \ge 0$, or by a multiplicative relaxation factor $\theta_m \ge 1$. 
For an additive factor, the  condition in the while loop 
in line~2 of Algorithm~\ref{alg_proto2} becomes 
\begin{equation}\label{e_add}
f( (\mathcal{S}^{(i)} - \{x_i\}) \cup \{y_j\} )  > f(\mathcal{S}^{(i)}) + \theta_a. 
\end{equation}
The output $\mathcal{S}_k$ of Algorithm~\ref{alg_proto2} is locally optimal because for every $x \in \mathcal{S}_k$ and $y \in \{1,\ldots,n\} - \mathcal{S}_k$, 
\[ f( (\mathcal{S}_k - \{ x \}) \cup \{y\})  
\le f(\mathcal{S}_k) + \theta_a. \] 
For a multiplicative factor, the condition in the while loop in line~2 of Algorithm~\ref{alg_proto2} becomes
\begin{equation}\label{e_mult}
f( (\mathcal{S}^{(i)} - \{x_i\}) \cup \{y_j\} )  > \theta_m f(\mathcal{S}^{(i)}).
\end{equation}
Again, the output $\mathcal{S}_k$ from Algorithm~\ref{alg_proto2} is locally optimal.

\begin{theorem}\label{t_53}
Let $f$ be a non-negative non-decreasing set submodular function on a finite set
$\mathcal{X}$ with $f(\emptyset)=0$;  $0<k<|\mathcal{X}|$ an integer; and~$\mathcal{S}_k^*$ a set that maximizes
$f(\mathcal{S})$ over all subsets $\mathcal{S}\subset\mathcal{X}$ with $|\mathcal{S}|=k$.
If $\mathcal{S}_k$ is the output of the 1-interchange Algorithm~\ref{alg_proto2} with additive relaxation
factor $\theta_a \ge 0$, then 
\begin{align*}
0\leq f(\mathcal{S}_k^*)-f(\mathcal{S}_k) \leq\frac{k-1}{2k-1}f(\mathcal{S}_k^*)+\frac{k^2}{2k-1}\theta_a,
\end{align*}
where $\frac{k-1}{2k-1}<\frac{1}{2}$ and
$\frac{k^2}{2k-1}\leq k$.

 If $\mathcal{S}_k$ is the output of the 1-interchange Algorithm~\ref{alg_proto2} with multiplicative 
 relaxation factor $\theta_m\geq 1$, then
\begin{align*}
0\leq f(\mathcal{S}_k^*)-f(\mathcal{S}_k) \leq \frac{k-1 +k^2(\theta_m-1)}{2k-1 +k^2(\theta_m-1)} f(\mathcal{S}_k^*). 
\end{align*}
\end{theorem}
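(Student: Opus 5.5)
The plan is to redo the exchange argument of \cite[Section 5]{Nemhauser1978} while carrying the slack $\theta_a$ through it. The multiplicative case then follows by a substitution.

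Write $\mathcal{S}=\mathcal{S}_k$ and $\mathcal{T}=\mathcal{S}_k^*$. The lower bound $0\le f(\mathcal{T})-f(\mathcal{S})$ is immediate from the maximality of $\mathcal{T}$. For the upper bound, let $r=|\mathcal{S}-\mathcal{T}|=|\mathcal{T}-\mathcal{S}|\le k$. Label $\mathcal{S}-\mathcal{T}=\{x_1,\dots,x_r\}$ and $\mathcal{T}-\mathcal{S}=\{y_1,\dots,y_r\}$, and pair $x_i$ with $y_i$. The local optimality condition coming from \eqref{e_add} gives $f((\mathcal{S}-\{x_i\})\cup\{y_i\})\le f(\mathcal{S})+\theta_a$ for each $i$.

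\textbf{Step 1 (basic inequalities).}
\begin{enumerate}
\item Monotonicity and submodularity (Definition~\ref{d_1}), telescoping over the $y_i$, give
\[
f(\mathcal{T})\le f(\mathcal{S}\cup\mathcal{T})\le f(\mathcal{S})+\sum_i\bigl[f(\mathcal{S}\cup\{y_i\})-f(\mathcal{S})\bigr].
\]
\item Submodularity applied to $\mathcal{S}-\{x_i\}\subset\mathcal{S}$ gives
\[
f(\mathcal{S}\cup\{y_i\})-f(\mathcal{S})\le f((\mathcal{S}-\{x_i\})\cup\{y_i\})-f(\mathcal{S}-\{x_i\})\le f(\mathcal{S})+\theta_a-f(\mathcal{S}-\{x_i\}).
\]
\item Removing $x_1,\dots,x_r$ one at a time from $\mathcal{S}$ and using submodularity gives
\[
\sum_i\bigl[f(\mathcal{S})-f(\mathcal{S}-\{x_i\})\bigr]\le f(\mathcal{S})-f(\mathcal{S}\cap\mathcal{T}).
\]
\end{enumerate}
Combining the three yields
\[
f(\mathcal{T})\le 2f(\mathcal{S})-f(\mathcal{S}\cap\mathcal{T})+r\theta_a .
\]

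\textbf{Step 2 (removing $f(\mathcal{S}\cap\mathcal{T})$).} I expect this to be the main obstacle. The inequality from Step~1 alone only yields the factor $\tfrac12$, so a second inequality is needed that exploits the $k-r$ common elements.

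The first thing to try is a refinement of Step~1 in which each marginal gain $f(\mathcal{S}\cup\{y_i\})-f(\mathcal{S})$ is charged against a loss taken along a chain. The chain should include elements of $\mathcal{S}\cap\mathcal{T}$, so that, after averaging over orderings, each element of $\mathcal{S}$ contributes a $1/k$ share of $f(\mathcal{S})$. In parallel, $f(\mathcal{T})$ can be bounded by $f(\mathcal{S}\cap\mathcal{T})$ plus the marginal gains of the $y_i$ computed over $\mathcal{S}\cap\mathcal{T}$.

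A suitable convex combination of these inequalities, with weights depending on $r$ and $k$, should give
\[
k\,f(\mathcal{T})\le(2k-1)f(\mathcal{S})+c_r\theta_a,\qquad c_r\le k^2 .
\]
The worst case over $r$ should then produce exactly the coefficient $\frac{k-1}{2k-1}$; when $\theta_a=0$ this is the statement of Theorem~\ref{t_ng2}. Rearranging gives the additive bound, and the bound $\frac{k^2}{2k-1}\le k$ holds because $k^2\le 2k^2-k$ for $k\ge1$.

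\textbf{Step 3 (multiplicative factor).} Condition \eqref{e_mult} is the additive condition \eqref{e_add} with the $\mathcal{S}$-dependent slack $\theta_a=(\theta_m-1)f(\mathcal{S})\ge 0$. Since Steps~1--2 use $\theta_a$ only for the final local optimum $\mathcal{S}$, the inequality $k f(\mathcal{T})\le(2k-1)f(\mathcal{S})+k^2\theta_a$ holds with this choice. Setting $c=k^2(\theta_m-1)$, this reads
\[
k\,f(\mathcal{T})\le(2k-1+c)\,f(\mathcal{S}).
\]
Hence $f(\mathcal{T})-f(\mathcal{S})\le\bigl(1-\tfrac{k}{2k-1+c}\bigr)f(\mathcal{T})=\frac{k-1+c}{2k-1+c}f(\mathcal{T})$, which is the claimed multiplicative bound.
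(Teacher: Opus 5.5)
Your Step 1 is correct, and Step 3 is the right reduction; it is exactly what the paper does. The gap is Step 2, which is the only place where the constant $\frac{k-1}{2k-1}$ rather than $\frac12$ could come from, and it is not an argument. No inequality is derived there: the form $kf(\mathcal{T})\le(2k-1)f(\mathcal{S})+c_r\theta_a$ with $c_r\le k^2$ is only asserted.

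The mechanism you sketch also cannot work. When $r=k$, the set $\mathcal{S}\cap\mathcal{T}$ is empty, so chains through it contribute nothing. Moreover, the $r$ matched swap inequalities are genuinely too weak, as a $k=2$ example shows. Take the coverage function $f(\mathcal{A})=|\bigcup_{a\in\mathcal{A}}U_a|$ with $U_{x_1}=\{1\}$, $U_{x_2}=\{2\}$, $U_{y_1}=\{2,3\}$, $U_{y_2}=\{1,4\}$. Then $f(\{x_2,y_1\})=f(\{x_1,y_2\})=2=f(\mathcal{S})$, so both matched swaps are non-improving with $\theta_a=0$. Yet $f(\mathcal{T})=4=2f(\mathcal{S})$, while the theorem requires $f(\mathcal{T})\le\frac32 f(\mathcal{S})$. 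Only an \emph{unmatched} swap, e.g.\ $f(\{x_1,y_1\})=3$, rules this out.

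The missing idea, which is the paper's, has two parts. First, swap a \emph{single} element $x\in\mathcal{S}$ against \emph{every} element of $\mathcal{T}$, choosing $x$ with small loss $\zeta\equiv f(\mathcal{S})-f(\mathcal{S}-\{x\})\le f(\mathcal{S})/k$. Second, expand $f(\mathcal{T})$ around $\mathcal{S}-\{x\}$ rather than around $\mathcal{S}$.

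The paper obtains $x$ as the last element $s_k$ of a greedy ordering of $\mathcal{S}_k$. The increments of that ordering are non-increasing, so $\zeta_k\le f(\mathcal{S}_k)/k$. Equivalently, your item 3 applied to all of $\mathcal{S}$ gives $\sum_{x\in\mathcal{S}}\bigl(f(\mathcal{S})-f(\mathcal{S}-\{x\})\bigr)\le f(\mathcal{S})$, so the minimizing $x$ works.

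Each of the at most $k$ elements $y\in\mathcal{T}-(\mathcal{S}-\{x\})$ is either $x$ itself or an admissible swap partner for $x$. Hence its marginal gain over $\mathcal{S}-\{x\}$ is at most $\zeta+\theta_a$, and
\[
f(\mathcal{T})\le f(\mathcal{S}-\{x\})+k(\zeta+\theta_a)=f(\mathcal{S})+(k-1)\zeta+k\theta_a\le\frac{2k-1}{k}f(\mathcal{S})+k\theta_a
\]
for every $r$. This is your target inequality with $c_r=k^2$, and your Step 3 then goes through unchanged.

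Both ingredients are needed. With a single $x$ but expanding around $\mathcal{S}$ via your item 2, the case $r=k$ still only yields $f(\mathcal{S})+k\zeta+k\theta_a\le 2f(\mathcal{S})+k\theta_a$. Taking $\mathcal{S}-\{x\}$ as the base point saves exactly the one $\zeta$ that turns $2$ into $\frac{2k-1}{k}$.
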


In the special case $\theta_a = 0$ or $\theta_m = 1$, Theorem~\ref{t_53} reduces to Theorem~\ref{t_ng2}.

\begin{proof}
We proceed as in the proof of~\cite[Theorem 5.1]{Nemhauser1978}.
    Denote the elements of the optimal solution by $\mathcal{S}_k^* = \{s_1^*,\dots,s_k^*\}$. 

    Apply Algorithm~\ref{alg_proto} to the output $\mathcal{S}_k$ 
    of Algorithm~\ref{alg_proto2}
    to order the elements $\mathcal{S}_k = \{s_1,\dots,s_k\}$, and define the
    subsets $\mathcal{S}_{j} \equiv \{s_1,\dots,s_j\}$, $1\le j \le k$ and $\mathcal{S}_0 \equiv \emptyset$. 
      
    Define the increments $\zeta_j \equiv f(\mathcal{S}_s) - f(\mathcal{S}_{s-1})$, $1 \le j \le k$. Then
    \begin{align*} \zeta_{j} &= f(\mathcal{S}_j) - f(\mathcal{S}_{j-1}) =
     f(\mathcal{S}_{j-1} \cup \{s_j\}) - f(\mathcal{S}_{j-1}) 
    \le  \>  f(\mathcal{S}_{j-2} \cup \{s_j\}) - f(\mathcal{S}_{j-2}) \\ 
    &\le  \>  f(\mathcal{S}_{j-2} \cup \{s_{j-1}\}) - f(\mathcal{S}_{j-2}) =  f(\mathcal{S}_{j-1}) - f(\mathcal{S}_{j-2}) = \zeta_{j-1}, \qquad 2\leq j\leq k,
    \end{align*} 
    where the first inequality follows from submodularity, and the second from greedy ordering.
     Thus, the increments are non-increasing
    $\zeta_1 \ge \dots \ge \zeta_{k}$. With $f(\emptyset)=0$ this implies
    \begin{equation}\label{eqn:decomp} f(\mathcal{S}_k) = f(\emptyset) + \sum_{j=1}^{k}\zeta_j \ge k \zeta_{k}. \end{equation}
From \cite[Proposition 2.1(iv)]{Nemhauser1978} or~\cite[Proposition 2.4]{Nemhauser1978} follows 
    \begin{equation}\label{eqn:decomp2} f(\mathcal{S}_k^*) \le f(\mathcal{S}_{k-1}) + \sum_{s_j^* \in \mathcal{S}_k^* - \mathcal{S}_{k-1}} (f(\mathcal{S}_{k-1} \cup \{s_j^*\} ) - f(\mathcal{S}_{k-1})).\end{equation}
    We consider additive and multiplicative relaxation
    factors separately.
    
    \paragraph{Additive relaxation} The interchange property (\ref{e_add}) ensures
     \begin{equation*}
    f(\mathcal{S}_{k-1} \cup \{s_j^*\} )  \le  f(\mathcal{S}_k) + \theta_a , \qquad s_j^* \in \mathcal{S}_k^* - \mathcal{S}_{k-1}.
    \end{equation*}
    Subtract $f(\mathcal{S}_{k-1})$ on both sides,
    \[ f(\mathcal{S}_{k-1} \cup \{s_j^*\} ) - f(\mathcal{S}_{k-1}) \le  f(\mathcal{S}_k)  - f(\mathcal{S}_{k-1}) +\theta_a = \zeta_{k} +\theta_a , \qquad s_j^* \in \mathcal{S}_k^* - \mathcal{S}_{k-1}.  \]
Insert this inequality into~\eqref{eqn:decomp2}
    \[ \begin{aligned} f(\mathcal{S}_k^*) \le & \> f(\mathcal{S}_{k-1}) + | \mathcal{S}_k^*-\mathcal{S}_{k-1} |(\zeta_{k} + \theta_a) \\
    \le & \> f(\mathcal{S}_{k-1}) + k\zeta_k +  k\theta_a = f(\mathcal{S}_k) + (k-1)\zeta_{k} + k\theta_a,
    \end{aligned}\]
    where the last inequality follows from~\eqref{eqn:decomp} and $1 \le |\mathcal{S}_k^*-\mathcal{S}_{k-1} |\leq k$. From~\eqref{eqn:decomp} follows $\zeta_k \le kf(\mathcal{S}_k)$, so that
        \[ f(\mathcal{S}_k^*) \le  \frac{2k-1}{k}f(\mathcal{S}_k) +k\theta_a.\]
Dividing by $\frac{k}{2k-1}=1-\frac{k-1}{2k-1}$ gives
\begin{align*}
f(\mathcal{S}_k^*)\leq f(\mathcal{S}_k) +\frac{k-1}{2k-1}f(\mathcal{S}_k^*)+\frac{k^2}{2k-1}\theta_a.
\end{align*}

\paragraph{Multiplicative relaxation} 
    The interchange property (\ref{e_mult}) ensures
    \begin{equation*}
    f(\mathcal{S}_{k-1} \cup \{s_j^*\}) \le \theta_m  f(\mathcal{S}_k), \qquad 1 \le j \le k.
    \end{equation*}
Subtract $f(\mathcal{S}_{k-1})$ on both sides, and use the expression for $\zeta_k$,
    \[ f(\mathcal{S}_{k-1} \cup \{s_j^*\} ) - f(\mathcal{S}_{k-1}) \le  \theta_m f(\mathcal{S}_k)  - f(\mathcal{S}_{k-1}) = \zeta_{k} + (\theta_m-1) f(\mathcal{S}_{k})  , \qquad 1\le j \le k.   \]
    Insert this inequality into~\eqref{eqn:decomp2},
    \begin{align*} f(\mathcal{S}_k^*) &\le   f(\mathcal{S}_{k-1})    +k (\theta_m-1) f(\mathcal{T}_{k}) + k \zeta_{k}   
        \leq f(\mathcal{S}_k)\left( 1 + k(\theta_m-1) + \frac{k-1}{k} \right)\\
        &= \frac{f(\mathcal{S}_k)}{k}\left(2k-1+k^2(\theta_m-1)\right),
    \end{align*}
where the last inequality follows from~\eqref{eqn:decomp}.
    \end{proof}

Theorem~\ref{t_53} implies that the solution from the 1-interchange Algorithm~\ref{alg_proto2} tends to differ more
from the optimal solution, as the  
relaxation factors $\theta_a$ and $\theta_m$ and 
the desired number $k$ of columns increase.

\subsection{Existing Work}\label{s_2rel}
 A survey of set submodular functions, their applications and extensions is given in \cite{Krause2014}.
Distributed greedy algorithms for maximizing set submodular functions are analyzed in \cite{Mirza2013}.
In the context of determinantal point processes, there are greedy algorithms for set log-submodular functions \cite{Gillen12}
and lazy greedy algorithms~\cite{Hemmi22}.
Extensions to non-monotone set submodular functions are analyzed in \cite{Bodek22,Buch25,Buch14,Sakaue20}.
Adaptive submodularity is introduced and surveyed in  \cite{GoK10,GoK11}.
Randomized greedy algorithms for sensor selection are presented in \cite{Hash2020}.

%% file: sec3.tex
\section{Submodular functions in subset selection}\label{s_3}
After defining the volume of a matrix (Definition~\ref{d_vol}), we review work on column subset selection based on
volume maximization (Section~\ref{s_rel3}), and show that the log-volume is a set submodular function (Section~\ref{s_lv}).

Unlike \cite[Definition 1.1]{Ben1992} and \cite[Section 2]{MikOs18},
the definition below is strict in that it allows a positive volume
only for matrices with non-zero singular values.

\begin{definition}\label{d_vol}
The volume of a matrix $\mx\in\real^{m\times n}$ with $\rank(\mx)=n$ equals
\begin{equation*}
\vol(\mx)\equiv\sqrt{\det(\mx^T\mx)}=
\prod_{i=1}^n{\sigma_i(\mx)}.
\end{equation*}
The volume of a matrix $\mx\in\real^{m\times n}$ with $\rank(\mx)=m$ equals
\begin{equation*}
\vol(\mx)\equiv\sqrt{\det(\mx\mx^T)}=\prod_{i=1}^m{\sigma_i(\mx)}.
\end{equation*}
The volume of a nonsingular matrix $\mx\in\rmm$ equals
\begin{equation*}
\vol(\mx)=|\det(\mx)|=\prod_{i=1}^m{\sigma_i(\mx)}.
\end{equation*}
%
\end{definition}

\subsection{Existing work}\label{s_rel3}
Volume maximization is NP-hard.
Specifically, given $\mx\in\rmn$ with $\rank(\mx)\geq k$, finding a $m\times k$ column submatrix of $\mx$
with maximal volume is a NP-hard problem \cite[Theorem 6]{CM09}, \cite{Papa84}.
Moreover \cite[Section 1]{DEFM15}, given $\mx\in\real^{k\times n}$ with $\rank(\mx)=k$,
there exists a universal constant such $c>0$ so that
finding a $k\times k$ maximal-volume submatrix of $\ma$  within a factor $c^k$ is also NP-hard.

The 
Businger-Golub algorithm 
\cite{BusGo65}, \cite[Section 5.4.2]{GoVL13}
computes the pivoted QR factorization of a matrix
$\mx\in\rmn$, and it
corresponds to Khachiyan's algorithm for finding the maximum subdeterminant \cite[Figure 1]{DEFM15}. 
If $\mx_k^*\in\real^{m\times k}$ is a submatrix
of $\mx$ with maximal volume, then the output
$\mx_k\in\real^{m\times k}$ from the Businger-Golub algorithm
satisfies \cite[Theorem 11]{CM09}
\begin{equation*}
\vol(\mx_k)\geq \frac{1}{k!} \vol(\mx_k^*).
\end{equation*}

Bounds for the volume can also be derived from bounds for individual singular values. 

\begin{lemma}\label{lem:busgolub}
    Let $\mx \in \rmn$ and $k$ a fixed integer with $0<k \le \rank(\mx)$. 
   Let $\mx_k^*\in\real^{m\times k}$ be a column submatrix of $\mx$ that maximizes
$\log{\vol(\mb)}$ among all submatrices $\mb\in\real^{m\times k}$ of $\mx$.
Then the output
$\mx_k\in\real^{m\times k}$ from the Businger-Golub algorithm
satisfies 
\[ \frac{\vol(\mx_k^*)}{\prod_{j=1}^k 2^j \sqrt{n-j}} \le \vol(\mx_k) \le \vol(\mx_k^*).\] 
\end{lemma}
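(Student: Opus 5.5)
The upper bound $\vol(\mx_k)\le\vol(\mx_k^*)$ holds by the definition of $\mx_k^*$ as a maximizer over all $m\times k$ column submatrices, since $\mx_k$ is one of them. For the lower bound I will compare singular values one at a time, as the remark before the lemma suggests. The Businger-Golub algorithm produces $\mx\mpi = \mq\begin{bmatrix}\mr_{11} & \mr_{12}\\ \vzero & \mr_{22}\end{bmatrix}$ with $\mr_{11}\in\real^{k\times k}$ upper triangular and $\mx_k=\mq_1\mr_{11}$. Hence $\sigma_j(\mx_k)=\sigma_j(\mr_{11})$ and $\vol(\mx_k)=\prod_{j=1}^k\sigma_j(\mr_{11})$.

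The key input is the classical singular value bound for QR with column pivoting, from Gu--Eisenstat \cite{GuEis96}, which in turn rests on the Faddeev--Kublanovskaya--Faddeeva bound for pivoted triangular factors:
\[
\sigma_j(\mr_{11})\ \ge\ \frac{\sigma_j(\mx)}{\sqrt{n-j}\,2^j}\quad\text{(or an equivalent form)},\qquad 1\le j\le k.
\]
To justify this I would go back to the diagonal dominance of pivoted $\mr$, namely $|r_{ii}|^2\ge\sum_{l=i}^{p}|r_{lp}|^2$. From it one gets $\|\mr_{11}^{-1}\|$-type and $\|\mr_{11}^{-1}\mr_{12}\|$-type bounds with factors $2^j$ and $\sqrt{n-j}$, and these give the stated interlacing-type lower bound for each $j$. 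The exact bookkeeping of the constant, so that it matches $2^j\sqrt{n-j}$, is the step I expect to be the main obstacle. I would cite the known result rather than rederive it, adjusting indices where needed. One also has to be careful that $j\le k\le\rank(\mx)$, so that $n-j\ge 1$ for $j<n$ and the inequality is meaningful.

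Next, by the interlacing \eqref{e_inter} applied to the column submatrix $\mx_k^*$ of $\mx$, $\sigma_j(\mx_k^*)\le\sigma_j(\mx)$ for $1\le j\le k$. Combining the two bounds gives
\[
\sigma_j(\mx_k)\ \ge\ \frac{\sigma_j(\mx_k^*)}{2^j\sqrt{n-j}},\qquad 1\le j\le k.
\]
Both sides are nonnegative, so taking the product over $j=1,\ldots,k$ yields $\vol(\mx_k)\ge\vol(\mx_k^*)/\prod_{j=1}^k2^j\sqrt{n-j}$, which is the lower bound. Since $k\le\rank(\mx)$, $\vol(\mx_k^*)>0$, so the bound is nontrivial and also shows that $\mx_k$ has full column rank.
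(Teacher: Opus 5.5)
Your proposal is correct and follows essentially the same route as the paper. Both take the upper bound from maximality of $\mx_k^*$, and the lower bound from the per-singular-value bound $\sigma_j(\mx_k)\ge\sigma_j(\mx)/(2^j\sqrt{n-j})$ of \cite[Theorem 7.2]{GuEis96} combined with interlacing $\sigma_j(\mx_k^*)\le\sigma_j(\mx)$; the only cosmetic difference is that you apply interlacing before multiplying over $j$, while the paper multiplies first.
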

\begin{proof}
The upper bound follows from the definition of $\mx_k^*$. The bounds for individual singular values from \cite[Theorem 7.2]{GuEis96} imply
\[ \frac{\sigma_j(\mx)}{2^j \sqrt{n-j}} \le \sigma_j(\mx_k) \le \sigma_j(\mx), \qquad 1 \le j \le k. \]  
Multiplication gives
\[ \prod_{j=1}^k  \frac{\sigma_j(\mx)}{2^j\sqrt{n-j}}   \le 
{\prod_{j=1}^k\sigma_j(\mx_k)} = {\vol(\mx_k)} .\]
Singular value interlacing~\eqref{e_inter} implies $\vol
(\mx_k^*) = \prod_{j=1}^k  \sigma_j(\mx_k^*) \le \prod_{j=1}^k  \sigma_j(\mx)$. Insert this
into the previous inequality to obtain the lower bound.
\end{proof}

Additional bounds for the case $ m\le k \le n$ are given in~\cite[Section 2.1]{osinsky2026subsetselectionmatricescolumn}. 

Next are bounds from the Gu-Eisenstat strong rank revealing QR algorithm~\cite{GuEis96}.

\begin{lemma}
    Let $\mx \in \rmn$ and $k$ a fixed integer with $0<k \le \rank(\mx)$.
   Let $\mx_k^*\in\real^{m\times k}$ be a column submatrix of $\mx$ that maximizes
$\log{\vol(\mb)}$ among all submatrices $\mb\in\real^{m\times k}$ of $\mx$.
 Then the output
$\mx_k\in\real^{m\times k}$ from the Gu-Eisenstat~\cite[Algorithm 3]{GuEis96} with relaxation factor $f = \eta$ satisfies 
\[   \frac{\vol(\mx_k^*)}{(1+\eta^2k(n-k))^{k/2}}   \le 
{\vol(\mx_k)} \le \vol(\mx_k^*).\]
\end{lemma}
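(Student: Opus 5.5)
Our plan is to follow the proof of Lemma~\ref{lem:busgolub}: combine a lower bound on the individual singular values of $\mx_k$ with singular value interlacing~\eqref{e_inter}. The upper bound $\vol(\mx_k)\le\vol(\mx_k^*)$ follows at once from the maximality of $\mx_k^*$, so all the work is in the lower bound.

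\textbf{Step 1: a singular value bound.} We invoke the singular value bound of strong rank-revealing QR, \cite[Theorem 3.2]{GuEis96}. Write the output of \cite[Algorithm 3]{GuEis96} as
\[
\mx\Pi=\mq\begin{bmatrix}\mr_{11}&\mr_{12}\\ 0&\mr_{22}\end{bmatrix}, \qquad \mx_k=\mq_1\mr_{11}.
\]
On termination, every entry of $\mr_{11}^{-1}\mr_{12}$ is bounded by $\eta$ in magnitude. The proof of that theorem writes $\mx\Pi$ as $\mx_k\begin{bmatrix}\mi_k & \mr_{11}^{-1}\mr_{12}\end{bmatrix}$ plus a block carried only by the last $n-k$ columns. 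From this it derives
\[
\sigma_j(\mx)\le \sqrt{1+\eta^2k(n-k)}\;\sigma_j(\mr_{11}),\qquad 1\le j\le k .
\]
We will restate this form explicitly and check that the constant is $1+\eta^2 k(n-k)$. It comes from bounding the Frobenius norm $\|\mr_{11}^{-1}\mr_{12}\|_F^2\le k(n-k)\eta^2$. The form with $\sigma_j(\mx)$ in the numerator is the one needed here, rather than the bound on $\sigma_j(\mr_{22})$.

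\textbf{Step 2: multiply.} Since $\mq_1$ has orthonormal columns, $\sigma_j(\mx_k)=\sigma_j(\mr_{11})$. Taking the product over $j=1,\dots,k$ gives
\[
\prod_{j=1}^k\sigma_j(\mx)\le (1+\eta^2k(n-k))^{k/2}\,\vol(\mx_k).
\]

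\textbf{Step 3: interlacing.} The upper inequality in~\eqref{e_inter}, applied to $\mx_k^*$, gives $\vol(\mx_k^*)=\prod_{j=1}^k\sigma_j(\mx_k^*)\le\prod_{j=1}^k\sigma_j(\mx)$. Combining this with Step 2 yields the claimed lower bound.

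\textbf{Main obstacle.} The only delicate point is Step 1: making sure the cited bound from \cite{GuEis96} really gives the inequality in the stated direction, with the stated constant and for all $j\le k$. If it is stated only in the reciprocal form $\sigma_i(\mr_{11})\ge\sigma_i(\mx)/\sqrt{1+\eta^2k(n-k)}$, that is equivalent and fine.

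If we instead had to derive it directly, we would argue as follows. Write $\mx\Pi=\mx_k\mw+\mq_2\begin{bmatrix}0&\mr_{22}\end{bmatrix}$ with $\mw=\begin{bmatrix}\mi_k&\mr_{11}^{-1}\mr_{12}\end{bmatrix}$. Because $\mq_2\perp\mq_1$, the Gram matrix splits as $\mw^T\mr_{11}^T\mr_{11}\mw+\mathrm{diag}(0,\mr_{22}^T\mr_{22})$. Bounding $\sigma_j(\mx_k\mw)\le\sigma_j(\mr_{11})\|\mw\|_2$ with $\|\mw\|_2^2\le 1+\eta^2k(n-k)$ handles the first term. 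The second term may contribute to $\sigma_j(\mx)$, and controlling it is exactly where Gu and Eisenstat's argument is needed, so we would rely on their theorem rather than re-derive it.
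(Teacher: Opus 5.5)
Your proposal is correct and matches the paper's argument. The paper repeats the proof of Lemma~\ref{lem:busgolub} using \cite[Theorem 3.2]{GuEis96}, i.e.\ $\sigma_j(\mx)\le\sqrt{1+\eta^2k(n-k)}\,\sigma_j(\mx_k)$ for $1\le j\le k$, then takes the product over $j$ and applies interlacing to $\mx_k^*$. One correction to your side remark: the constant comes from the full termination criterion of \cite[Algorithm 3]{GuEis96}, not from the entrywise bound on $\mr_{11}^{-1}\mr_{12}$ alone; summed over all $i,j$ it gives $\|\mr_{11}^{-1}\mr_{12}\|_F^2+\|\mr_{11}^{-1}\|_F^2\|\mr_{22}\|_F^2\le\eta^2k(n-k)$, and this is what also controls the $\mr_{22}$ contribution you flagged.
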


\begin{proof}
The proof is similar to that of Lemma~\ref{lem:busgolub} but uses~\cite[Theorem 3.2]{GuEis96} instead. 
\end{proof}

Algorithms for volume maximization are presented in \cite{Osinsky23,osinsky2026subsetselectionmatricescolumn} when $k$ exceeds the number of rows,
with bounds on the norm of the pseudo-inverse.

The notion of submodularity for subset selection has also been explored in the context of regression~\cite{DasK2018},
graph Laplacian column selection~\cite{fornace2024column}, and minimization of the matrix residual
$\min_{\mc}\|\ma - \mc\mc^\dagger\ma\|_F$~\cite{altschuler2016greedy}.
The submodularity of the log-volume is discussed in~\cite{kelmans1983multiplicative,taskar2013determinantal}.

\subsection{Log-volume}\label{s_lv}
After presenting two examples (Examples \ref{ex_1} and~\ref{ex_2}), and expressing set submodular functions
in terms of matrices (Remark~\ref{r_1}),
we show that the log-volume is a submodular function
(Theorem~\ref{t_logvol}) and establish conditions
when it is non-decreasing (Lemma~\ref{l_1}).

\begin{remark}[Set submodular functions in terms of matrices]\label{r_1}
Given a matrix $\mx\in\rmn$. With the notation in
Definition~\ref{d_set}, let
$\mathcal{S}\subset\mathcal{T}\subset\{1,\ldots, n\}$,
so that $\mx_{\mathcal{S}}$ is a submatrix of $\mx_{\mathcal{T}}$
which, in turn, is a submatrix of $\mx$. Let 
$t\in\{1,\ldots,n\}-\mathcal{T}$.

A function $f$ on column subsets of~$\mx$ is 
\textit{set submodular}
if for all submatrices
\begin{equation*}
f(\mx_{\mathcal{S}\cup\{t\}})-f(\mx_{\mathcal{S}})\geq 
f(\mx_{\mathcal{T}\cup\{t\}})-f(\mx_{\mathcal{T}}).
\end{equation*}
That is, adding a column $\mx_{\{t\}}$ to a smaller submatrix $\mx_{\mathcal{S}}$ is more beneficial than adding 
$\mx_{\{t\}}$ to a larger submatrix $\mx_{\mathcal{T}}$.
\end{remark}

\begin{example}\label{ex_1}
The volume itself is \emph{not} a set submodular function on column submatrices.

With the notation in Definition~\ref{d_set}, let
\begin{equation*}
\mx=\begin{bmatrix} 2 & 0 & 0&1\\ 0&3 &0 &0\\ 0&0&4&0\end{bmatrix}\in\real^{3\times 4},
\end{equation*}
$\mathcal{S}=\{1\}$, and $\mathcal{T}=\{1,2\}$.
Then 
\begin{align*}
\vol(\mx_{\mathcal{S}})=2, \quad 
\vol(\mx_{\mathcal{S}\cup\{3\}})=8,\quad 
\vol(\mx_{\mathcal{T}})=6, \quad 
\vol(\mx_{\mathcal{T}\cup\{3\}})=24,
\end{align*}
and
\begin{equation*}
\vol(\mx_{\mathcal{S}\cup\{3\}})-\vol(\mx_{\mathcal{S}})=6\not\geq 18= 
\vol(\mx_{\mathcal{T}\cup\{3\}})-\vol(\mx_{\mathcal{T}}).
\end{equation*}
However, since the matrices are diagonal,
\begin{equation*}
\log{\vol(\mx_{\mathcal{S}\cup\{3\}})}-\log{\vol(\mx_{\mathcal{S}})}=\log(4)=
\log{\vol(\mx_{\mathcal{T}\cup\{3\}})}-
\log{\vol(\mx_{\mathcal{T}})}.
\end{equation*}
\end{example}

The log-volume of diagonal matrices is easy to determine.

\begin{example}\label{ex_2}
Let $\mx\in\rmn$ be a diagonal matrix with $\rank(\mx)=n$. Then $\log\vol(\mx)$
is a \emph{modular} function on column submatrices of $\mx$.

A set submodular function that satisfies the condition in Definition~\ref{d_1} with equality is called \emph{modular} \cite[Section 1.1]{Mirza2013},
\begin{equation*}
f(\mathcal{T}\cup \{x\}) - f(\mathcal{T})= f(\mathcal{S}\cup\{x\})-f(\mathcal{S})
\end{equation*}
for all $\mathcal{S}, \mathcal{T}\subset\mathcal{X}$ and 
$x\in\mathcal{X}-(\mathcal{S}\cup\mathcal{T})$.
\end{example}

Although the submodularity of the log-volume was proved
in
\cite{kelmans1983multiplicative}, \cite[Section 2.4.5]{taskar2013determinantal} and \cite[Appendix]{Sham2010}
we present a clean proof for general matrices below, starting with
a well known result about determinants.

\begin{lemma}\label{l_det}
Let $\mx\in\rmn$, $\mathcal{S}\subset\{1,\ldots,n\}$,
$t\in\{1,\ldots,n\} -\mathcal{S}$, and
$\mP_{\mathcal{S}}\equiv \mx_{\mathcal{S}}(\mx_{\mathcal{S}}^T\mx_{\mathcal{S}})^{\dagger}\mx_{\mathcal{S}}$ the 
orthogonal projector onto $\range(\mx_{\mathcal{S}})$.
Then
\begin{align*}
 \vol(\mx_{\mathcal{S}\cup\{t\}}) = \vol(\mx_{\mathcal{S}})\| (\mi-\mP_{\mathcal{S}} )\mx_{\{t\}}\|.
\end{align*}
\end{lemma}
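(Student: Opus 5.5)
The identity $\vol(\mx_{\mathcal{S}\cup\{t\}})^2=\vol(\mx_{\mathcal{S}})^2\,\|(\mi-\mP_{\mathcal{S}})\mx_{\{t\}}\|^2$ is a Schur complement statement, and I would prove it in squared form. Write $\mc\equiv\mx_{\mathcal{S}}$ and $\vx\equiv\mx_{\{t\}}$. Reorder the columns of $\mx_{\mathcal{S}\cup\{t\}}$ as $\begin{bmatrix}\mc & \vx\end{bmatrix}$; a column permutation does not change the singular values. The Gram matrix then has the block form
\begin{equation*}
\begin{bmatrix}\mc & \vx\end{bmatrix}^T\begin{bmatrix}\mc & \vx\end{bmatrix}
=\begin{bmatrix}\mc^T\mc & \mc^T\vx\\ \vx^T\mc & \vx^T\vx\end{bmatrix}.
\end{equation*}
(I read the projector in the statement as $\mP_{\mathcal{S}}=\mc(\mc^T\mc)^\dagger\mc^T$.)

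First I would treat the main case $\rank(\mc)=|\mathcal{S}|$, so that $\mc^T\mc$ is nonsingular. The Schur complement determinant formula gives
\begin{equation*}
\det\begin{bmatrix}\mc^T\mc & \mc^T\vx\\ \vx^T\mc & \vx^T\vx\end{bmatrix}
=\det(\mc^T\mc)\,\bigl(\vx^T\vx-\vx^T\mc(\mc^T\mc)^{-1}\mc^T\vx\bigr)
=\det(\mc^T\mc)\,\vx^T(\mi-\mP_{\mathcal{S}})\vx .
\end{equation*}
Since $\mi-\mP_{\mathcal{S}}$ is a symmetric idempotent, $\vx^T(\mi-\mP_{\mathcal{S}})\vx=\|(\mi-\mP_{\mathcal{S}})\vx\|^2$. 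Taking square roots and using Definition~\ref{d_vol} yields the claim. As an alternative I could take a thin QR factorization $\mc=\mq\mr$ and write $\begin{bmatrix}\mc&\vx\end{bmatrix}=\begin{bmatrix}\mq & \vq\end{bmatrix}\begin{bmatrix}\mr & \mq^T\vx\\ 0 & \rho\end{bmatrix}$ with $\rho=\|(\mi-\mq\mq^T)\vx\|$; the determinant of the triangular factor is then $\det(\mr)\rho$.

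The main obstacle is the degenerate situation, where the statement has to be interpreted given that the paper's volume is defined strictly. If $\mc$ is rank deficient, I would adopt the natural convention that $\vol(\mc)=0$, equivalently $\sqrt{\det(\mc^T\mc)}$. Then $\mx_{\mathcal{S}\cup\{t\}}$ is also rank deficient, so both sides vanish. If $\mc$ has full column rank but $\vx\in\range(\mc)$, then $\rho=0$ and the augmented matrix is rank deficient, so again both sides are zero. The case $\mathcal{S}=\emptyset$ reduces to $\vol(\vx)=\|\vx\|$, with $\vol(\emptyset)=1$ and $\mP_\emptyset=0$. With these conventions stated, the proof reduces to the Schur complement computation.
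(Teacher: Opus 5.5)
Your proposal is correct and matches the paper's proof: the paper applies Schur's determinant formula (in generalized Schur complement form, with $(\mx_{\mathcal{S}}^T\mx_{\mathcal{S}})^{\dagger}$) to the Gram matrix of $\mx_{\mathcal{S}\cup\{t\}}$, obtaining $\det(\mx_{\mathcal{S}\cup\{t\}}^T\mx_{\mathcal{S}\cup\{t\}})=\det(\mx_{\mathcal{S}}^T\mx_{\mathcal{S}})\,\mx_{\{t\}}^T(\mi-\mP_{\mathcal{S}})\mx_{\{t\}}$, and then takes square roots. Your separate treatment of the rank-deficient and $\mathcal{S}=\emptyset$ cases, and your reading of the projector with the missing transpose, make explicit what the paper leaves implicit in its pseudoinverse formulation and its later ``$0/0$'' convention.
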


\begin{proof}
Applying Schur's determinant formula for the 
generalized Schur complement \cite[Section 1.6]{horn2005basic} to
\begin{align*}
\mx_{\mathcal{S}\cup\{t\}}^T\mx_{\mathcal{S}\cup\{t\}}=
\begin{bmatrix}\mx_{\mathcal{S}}^T\mx_{\mathcal{S}}&
\mx_{\mathcal{S}}^T\mx_{\{t\}}\\
\mx_{\{t\}}^T\mx_{\mathcal{S}} &\mx_{\{t\}}^T\mx_{\{t\}}
\end{bmatrix}
\end{align*}
gives
\begin{align*}
\det\left(\mx_{\mathcal{S}\cup\{t\}}^T\mx_{\mathcal{S}\cup\{t\}}\right)=
\det\left(\mx_{\mathcal{S}}^T\mx_{\mathcal{S}}\right)
\det\left(\mx_{\{t\}}^T(\mi-\mP_{\mathcal{S}})\mx_{\{t\}}\right).
\end{align*}
Hence
$\vol(\mx_{\mathcal{S}\cup\{t\}}) = \vol(\mx_{\mathcal{S}})\| (\mi-\mP_{\mathcal{S}} )\mx_{\{t\}}\|$.
\end{proof}

\begin{theorem}\label{t_logvol}
For  $\mx\in\rmn$, $\log\vol(\mx)$
is a set submodular function on column submatrices of~$\mx$.
\end{theorem}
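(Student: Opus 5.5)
By Lemma~\ref{l_det}, the increment of the log-volume when column $t$ is added is
\begin{equation*}
\log\vol(\mx_{\mathcal{S}\cup\{t\}})-\log\vol(\mx_{\mathcal{S}})=\log\|(\mi-\mP_{\mathcal{S}})\mx_{\{t\}}\|.
\end{equation*}
So the submodularity inequality in Remark~\ref{r_1} reduces to comparing projection residuals. For $\mathcal{S}\subset\mathcal{T}$ and $t\notin\mathcal{T}$ it suffices to show
\begin{equation*}
\|(\mi-\mP_{\mathcal{T}})\mx_{\{t\}}\|\leq\|(\mi-\mP_{\mathcal{S}})\mx_{\{t\}}\|,
\end{equation*}
because $\log$ is increasing.

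The key step is the nesting of subspaces. Since $\mx_{\mathcal{S}}$ is a column submatrix of $\mx_{\mathcal{T}}$, we have $\range(\mx_{\mathcal{S}})\subseteq\range(\mx_{\mathcal{T}})$. Hence $\mP_{\mathcal{T}}\mP_{\mathcal{S}}=\mP_{\mathcal{S}}$, and therefore
\begin{equation*}
(\mi-\mP_{\mathcal{T}})(\mi-\mP_{\mathcal{S}})=\mi-\mP_{\mathcal{T}}.
\end{equation*}
It follows that
\begin{equation*}
\|(\mi-\mP_{\mathcal{T}})\mx_{\{t\}}\|=\|(\mi-\mP_{\mathcal{T}})(\mi-\mP_{\mathcal{S}})\mx_{\{t\}}\|\leq\|(\mi-\mP_{\mathcal{S}})\mx_{\{t\}}\|,
\end{equation*}
using $\|\mi-\mP_{\mathcal{T}}\|_2\leq 1$ for an orthogonal projector. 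Equivalently, the distance from $\mx_{\{t\}}$ to a larger subspace is no larger. Taking logarithms gives the claim. For $\mathcal{S}=\emptyset$ one uses $\mP_{\emptyset}=0$ and the convention $\vol=1$ for the empty matrix, so $\log\vol=0$; the same argument then applies.

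The main obstacle is rank deficiency, where volumes can vanish and logarithms equal $-\infty$. Definition~\ref{d_vol} assigns a positive volume only to full column-rank matrices, so I would adopt the convention $\log 0=-\infty$ and treat the degenerate cases separately.
\begin{itemize}
\item If $\mx_{\mathcal{T}\cup\{t\}}$ is rank deficient, its log-volume is $-\infty$ and the right-hand side of the inequality is $-\infty$ (or of the form $-\infty-(\text{finite})$). The inequality then holds trivially, provided the left side is not undefined.
\item If $\mx_{\mathcal{S}}$ is already rank deficient, then so is $\mx_{\mathcal{T}}$, and both increments are the indeterminate form $-\infty-(-\infty)$. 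These cases have to be excluded or assigned a value by convention.
\end{itemize}
The main computation therefore assumes $\mx_{\mathcal{T}\cup\{t\}}$ has full column rank. Then every submatrix involved has full column rank, all volumes are positive, Lemma~\ref{l_det} applies with ordinary inverses, and the argument above goes through. I would state these conventions explicitly before giving the inequality.
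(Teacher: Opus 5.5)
Your proof is correct and is essentially the paper's: both use Lemma~\ref{l_det} to reduce submodularity to $\|(\mi-\mP_{\mathcal{T}})\mx_{\{t\}}\|\le\|(\mi-\mP_{\mathcal{S}})\mx_{\{t\}}\|$. The paper gets this residual inequality from the Loewner inequality $\mP_{\mathcal{S}}\preceq\mP_{\mathcal{T}}$, while you get it, equivalently, from $(\mi-\mP_{\mathcal{T}})(\mi-\mP_{\mathcal{S}})=\mi-\mP_{\mathcal{T}}$ and contractivity of $\mi-\mP_{\mathcal{T}}$. One small slip is in your degenerate-case bookkeeping: if $\mx_{\mathcal{S}}$ has full column rank but $\mx_{\mathcal{T}}$ does not, the right-hand increment is $-\infty-(-\infty)$ rather than $-\infty$; the paper handles such cases by first establishing $\vol(\mx_{\mathcal{S}\cup\{t\}})\ge\vol(\mx_{\mathcal{S}})\,\vol(\mx_{\mathcal{T}\cup\{t\}})/\vol(\mx_{\mathcal{T}})$, with $0/0$ read as $0$, and only then taking logarithms.
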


\begin{proof}
Since $\log{\vol(\mx)}=\tfrac{1}{2}\log{\det(\mx^T\mx)}$,
it suffices to show the submodularity 
of $\log{\det(\mx^T\mx)}$. 

Let $\mathcal{S}\subset\mathcal{T}\subset\{1,\ldots,n\}$, $t\in\{1,\ldots,n\}-\mathcal{T}$, and define orthogonal projectors
$\mP_{\mathcal{S}}\equiv \mx_{\mathcal{S}}(\mx_{\mathcal{S}}^T\mx_{\mathcal{S}})^{\dagger}\mx_{\mathcal{S}}^T$ and
$\mP_{\mathcal{T}}\equiv \mx_{\mathcal{T}}(\mx_{\mathcal{T}}^T\mx_{\mathcal{T}})^{\dagger}\mx_{\mathcal{T}}^T$. Lemma~\ref{l_det}
implies
\begin{align*}
 \vol(\mx_{\mathcal{S}\cup\{t\}}) &= \vol(\mx_{\mathcal{S}})\| (\mi-\mP_{\mathcal{S}} )\mx_{\{t\}}\|\\
 \vol(\mx_{\mathcal{T}\cup\{t\}}) & = \vol(\mx_{\mathcal{T}})\| (\mi-\mP_{\mathcal{T}} )\mx_{\{t\}}\|.
\end{align*}
The Loewner inequality $\mP_{\mathcal{S}} \preceq \mP_{\mathcal{T}}$ implies 
\begin{equation*}
\| (\mi-\mP_{\mathcal{S}} )\mx_{\{t\}}\|^2=
\mx_{\{t\}}^T (\mi - \mP_{\mathcal{S}})\mx_{\{t\} }
\ge\mx_{\{t\}}^T (\mi - \mP_{\mathcal{T}})\mx_{\{t\}}=
\| (\mi-\mP_{\mathcal{T}} )\mx_{\{t\}}\|^2.
\end{equation*}
Combining everything gives
\begin{align*}
 \vol(\mx_{\mathcal{S}\cup\{t\}}) \geq\vol(\mx_{\mathcal{S}})\| (\mi-\mP_{\mathcal{T}})
 \mx_{\{t\}}\|
 =\vol(\mx_{\mathcal{S}})\vol(\mx_{\mathcal{T}\cup\{t\}})/
 \vol(\mx_{\mathcal{T}}).
\end{align*}
At  last, take logarithms and interpret $0/0$ as $0$.
\end{proof}

For full column-rank matrices with all singular values greater or equal to one, log-volume is a non-negative non-decreasing function.

\begin{lemma}\label{l_1} 
Let $\mx\in\real^{m\times n}$ have $\rank(\mx)=n$
and singular values $\sigma_1(\mx)\geq \cdots \geq \sigma_n(\mx)\geq 1$. Then  
$\log\vol(\mx)\geq 0$.

{Furthermore, if $\mathcal{S} \subset \mathcal{T}\subset\{1,\ldots,n\}$, then $0 \le \log\vol(\mx_{\mathcal{S}}) \le \log\vol(\mx_{\mathcal{T}})$.}
\end{lemma}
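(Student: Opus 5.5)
The first claim follows directly from Definition~\ref{d_vol}. Since $\rank(\mx)=n$, we have $\log\vol(\mx)=\sum_{i=1}^n\log\sigma_i(\mx)$. Each term is non-negative because $\sigma_i(\mx)\ge 1$, so the sum is non-negative.

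For the second claim I plan to transfer the lower bound $\sigma_n(\mx)\geq 1$ to every column submatrix, and then use Lemma~\ref{l_det} to show monotonicity one column at a time.

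\emph{Non-negativity for subsets.} Let $\mathcal{S}$ be non-empty with $|\mathcal{S}|=s$. Then $\mx_{\mathcal{S}}$ has full column rank $s$, because its columns are a subset of the linearly independent columns of $\mx$. For the smallest singular value I use the variational characterization rather than the interlacing bound~\eqref{e_inter}, since the index $\min\{m,n\}-k+j$ in~\eqref{e_inter} is awkward here. For any $\vy\in\real^{s}$, pad it with zeros to obtain $\hat\vy\in\rn$ with $\mx_{\mathcal{S}}\vy=\mx\hat\vy$ and $\|\hat\vy\|=\|\vy\|$. Then
\begin{equation*}
\sigma_s(\mx_{\mathcal{S}})=\min_{\|\vy\|=1}\|\mx_{\mathcal{S}}\vy\|\geq\min_{\|\hat\vy\|=1}\|\mx\hat\vy\|=\sigma_n(\mx)\geq 1.
\end{equation*}
Hence all singular values of $\mx_{\mathcal{S}}$ are at least one, and the first part applied to $\mx_{\mathcal{S}}$ gives $\log\vol(\mx_{\mathcal{S}})\ge 0$. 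For $\mathcal{S}=\emptyset$ we use the convention $\log\vol=0$, consistent with $f(\emptyset)=0$.

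\emph{Monotonicity.} It suffices to treat $\mathcal{T}=\mathcal{S}\cup\{t\}$ and then induct by adding the columns of $\mathcal{T}-\mathcal{S}$ one at a time. By Lemma~\ref{l_det},
\begin{equation*}
\log\vol(\mx_{\mathcal{S}\cup\{t\}})-\log\vol(\mx_{\mathcal{S}})=\log\|(\mi-\mP_{\mathcal{S}})\mx_{\{t\}}\|,
\end{equation*}
so we need $\|(\mi-\mP_{\mathcal{S}})\mx_{\{t\}}\|\geq 1$. The vector $(\mi-\mP_{\mathcal{S}})\mx_{\{t\}}$ equals $\mx_{\mathcal{S}\cup\{t\}}\vy$, where the entry of $\vy$ corresponding to column $t$ is $1$ and the entries for $\mathcal{S}$ are minus the least-squares coefficients. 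Thus $\|\vy\|\geq 1$, and
\begin{equation*}
\|(\mi-\mP_{\mathcal{S}})\mx_{\{t\}}\|\geq\sigma_{\min}(\mx_{\mathcal{S}\cup\{t\}})\,\|\vy\|\geq 1,
\end{equation*}
where the last step uses the subset bound above.

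The main obstacle is getting the subset singular-value bound right, since naive interlacing may appear to give only $\sigma_{\min\{m,n\}-s+j}(\mx)$. The zero-padding min–max argument resolves it. The case $\mathcal{S}=\emptyset$ in the monotonicity step is the statement $\|\mx_{\{t\}}\|\ge\sigma_n(\mx)\ge1$, which is the same argument with $s=1$.
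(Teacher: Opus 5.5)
Your proof is correct, and for the monotonicity claim it takes a different route from the paper.

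\textbf{Non-negativity.} The paper relies on singular value interlacing \eqref{e_inter}. Because $\rank(\mx)=n$ forces $m\ge n$, we have $\min\{m,n\}=n$, and the lower interlacing bound reads $\sigma_j(\mx_{\mathcal{S}})\ge\sigma_{n-s+j}(\mx)\ge\sigma_n(\mx)\ge 1$. So the index is not actually awkward. Your zero-padding min--max argument is a direct proof of this special case, and it reaches the same conclusion.

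\textbf{Monotonicity.} The paper compares the two singular value sets in one step, with no induction:
\begin{itemize}
\item interlacing, with $\mx_{\mathcal{S}}$ viewed as a column submatrix of $\mx_{\mathcal{T}}$, gives $\sigma_j(\mx_{\mathcal{S}})\le\sigma_j(\mx_{\mathcal{T}})$ for $j\le|\mathcal{S}|$;
\item the remaining singular values of $\mx_{\mathcal{T}}$ are at least one;
\item hence $\vol(\mx_{\mathcal{S}})\le\vol(\mx_{\mathcal{T}})$.
\end{itemize}
You instead use Lemma~\ref{l_det} to write each one-column increment as $\log\|(\mi-\mP_{\mathcal{S}})\vx_t\|$. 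You then bound this residual below by $\sigma_n(\mx)\ge 1$ and add columns one at a time.

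\textbf{Comparison.} The paper's argument is shorter. Yours has three advantages:
\begin{itemize}
\item it proves the marginal form of ``non-decreasing'' in Definition~\ref{d_1} directly;
\item it reuses the projection identity behind Theorem~\ref{t_logvol};
\item it gives the slightly more informative fact that every projected column norm, and in particular every pivot $r_{jj}=\|(\mi-\mP_{j-1})\vx_t\|$ in Algorithm~\ref{alg_qrcp}, is at least $\sigma_n(\mx)$.
\end{itemize}
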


\begin{proof}
From $\det(\mx^T\mx)=\prod_{i=1}^n{\sigma_i(\mx)^2}\geq 1$ follows $\log{\vol(\mx)}\geq 0$.

Singular value interlacing (\ref{e_so}) implies $1\leq \vol(\mx_{\mathcal{S}})\leq \vol(\mx_{\mathcal{T}})$. Therefore
$0\leq\log{\vol(\mx_{\mathcal{S}})}\leq \log{\vol(\mx_{\mathcal{T}})}$.
\end{proof}

%% file: sec4.tex
\section{A greedy algorithm for column subset selection}\label{s_4}
We show that the Businger-Golub QR factorization 
(Algorithm~\ref{alg_qrcp})
is a greedy algorithm in the sense of Algorithm~\ref{alg_proto}
for maximizing the log-volume of a full column-rank matrix (Section~\ref{s_BG1}),
and extend this to general matrices via a  smoothed analysis (Section~\ref{s_BG3}).

\begin{algorithm}[!t]
\caption{Conceptual version of Businger-Golub QR \cite{BusGo65,GoVL13}}\label{alg_qrcp}

\begin{algorithmic}[1]
\REQUIRE $\mx\in\rmn$, $0<k\leq\rank(\mx)$

\STATE $\mP_0\equiv\vzero_{m\times m}$, $\mathcal{S}_0=\emptyset$,
$\mathcal{T}_0=\{1,\ldots, n\}$
\FOR{$j=1:k$}
\STATE \COMMENT{Find projected column with largest norm}
\STATE Select column $\vx_t$ of $\mx_{\mathcal{T}_{j-1}}$ with 
$\|(\mi-\mP_{j-1})\vx_{t}\|\equiv\max_{i\in\mathcal{T}_{j-1}}{\|(\mi-\mP_{j-1})\vx_i\|}$
\smallskip

\STATE $\mathcal{S}_j=\mathcal{S}_{j-1} \cup \{t\}$, 
$\mathcal{T}_j=\mathcal{T}_{j-1}-\{t\}\qquad$
\COMMENT{$\mathcal{S}_j\cup\mathcal{T}_j=\{1,\ldots,n\}$}
\STATE $\vy_j\equiv (\mi-\mP_{j-1})\vx_t$,
$\mP_j=\mP_{j-1}+\vy_j\vy_j^{\dagger}\quad$ \COMMENT{Accumulate projections}
\ENDFOR
\bigskip

\RETURN $m\times k$ column submatrix $\mx_{\mathcal{S}_k}$ 
\end{algorithmic}
\end{algorithm}

Algorithm~\ref{alg_qrcp} views the Businger-Golub QR factorization
\cite{BusGo65}, \cite[Section 5.4.2]{GoVL13} from
a modified Gram-Schmidt perspective,
with a focus on selecting $k$ columns rather than computing
a QR factorization.

\subsection{Log-volume for full column-rank matrices}\label{s_BG1}
We show that Algorithm~\ref{alg_qrcp}
is a greedy algorithm 
for maximizing the log-volume of a full column-rank matrix with
sufficiently large singular values (Theorem~\ref{t_2}).
For full-column matrices with general singular singular values,
we give bounds for scaled matrices (Corollary~\ref{c_2}), 
and discuss the ramifications of the scaling (Remark~\ref{r_2}).

Theorem~\ref{t_2} below  shows that Algorithm~\ref{alg_qrcp}
is a greedy algorithm and it
approximates the maximal log-volume with an error of at most 37 percent when
applied to full column-rank matrices with sufficiently
large singular values. 

\begin{theorem}\label{t_2}
Let  $\mx\in\rmn$ have $\rank(\mx)=n$ and singular values $\sigma_1(\mx)\geq \cdots \geq\sigma_n(\mx)\geq 1$.
Let  $k$ be a fixed integer with  $0<k<n$. Let $\mx_k^*\in\real^{m\times k}$ be a column submatrix of $\mx$ that maximizes
$\log{\vol(\mb)}$ among all submatrices $\mb\in\real^{m\times k}$ of $\mx$.

Algorithm~\ref{alg_qrcp} is a greedy algorithm in the sense
of Algorithm~\ref{alg_proto} for maximizing the log-volume, and its output matrix 
$\mx_k\equiv\mx_{\mathcal{S}_k}$ satisfies
\begin{equation}\label{e_t2a}
0\leq\log{\vol(\mx_k^*)}-\log{\vol(\mx_k)}\leq (1-\tfrac{1}{k})^k\ \log{\vol(\mx_k^*)}
\end{equation}
where $(1-\tfrac{1}{k})^k\leq \tfrac{1}{e}< .37$.

The upper bound in (\ref{e_t2a}) is tight for matrices $\mx$ with orthonormal columns.
\end{theorem}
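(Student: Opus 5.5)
The plan is to apply Theorem~\ref{t_ng} to the set function $f(\mathcal{S})\equiv\log\vol(\mx_{\mathcal{S}})$ on $\mathcal{X}=\{1,\ldots,n\}$, with the convention $\vol(\mx_\emptyset)=1$, so that $f(\emptyset)=0$. Three things need checking: the hypotheses of Theorem~\ref{t_ng} hold for $f$; Algorithm~\ref{alg_qrcp} makes exactly the choices of Algorithm~\ref{alg_proto}; and the tightness claim.

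\textbf{Hypotheses on $f$.} Submodularity is Theorem~\ref{t_logvol}. Since $\rank(\mx)=n$ forces $m\geq n$, interlacing (\ref{e_inter}) gives, for any $|\mathcal{S}|=s$ and $1\leq j\leq s$,
\[
\sigma_j(\mx_{\mathcal{S}})\geq\sigma_{n-s+j}(\mx)\geq\sigma_n(\mx)\geq 1 .
\]
So every column submatrix has full column rank and singular values at least one. Lemma~\ref{l_1} then gives $0\leq f(\mathcal{S})\leq f(\mathcal{T})$ whenever $\mathcal{S}\subset\mathcal{T}$, which covers non-negativity and monotonicity. The case $\mathcal{S}=\emptyset$ needs a separate check: $f(\{t\})=\log\|\vx_t\|\geq\log\sigma_n(\mx)\geq 0$.

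\textbf{Algorithm~\ref{alg_qrcp} is greedy.} I would prove by induction on $j$ that $\mP_j$ is the orthogonal projector onto $\range(\mx_{\mathcal{S}_j})$.
\begin{itemize}
\item The base case is $\mP_0=\vzero$ with $\mathcal{S}_0=\emptyset$.
\item For the step, $\vy_j=(\mi-\mP_{j-1})\vx_t$ is nonzero because the columns are linearly independent.
\item $\vy_j$ is orthogonal to $\range(\mx_{\mathcal{S}_{j-1}})$, and $\vy_j\vy_j^\dagger$ is the projector onto $\vy_j$.
\item Hence $\mP_j=\mP_{j-1}+\vy_j\vy_j^\dagger$ projects onto $\range(\mx_{\mathcal{S}_{j-1}})\oplus\mathrm{span}\{\vy_j\}=\range(\mx_{\mathcal{S}_j})$; this is Gram--Schmidt.
\end{itemize}
With this established, Lemma~\ref{l_det} gives
\[
f(\mathcal{S}_{j-1}\cup\{i\})=f(\mathcal{S}_{j-1})+\log\|(\mi-\mP_{j-1})\vx_i\| .
\]
Because $\log$ is increasing, maximizing $f(\mathcal{S}_{j-1}\cup\{i\})$ over $i\in\mathcal{T}_{j-1}$ is the same as maximizing the projected column norm in line~4 of Algorithm~\ref{alg_qrcp}. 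Ties can be resolved identically in both algorithms. Thus Algorithm~\ref{alg_qrcp} is an instance of Algorithm~\ref{alg_proto}, and since $0<k<n=|\mathcal{X}|$, Theorem~\ref{t_ng} yields (\ref{e_t2a}).

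\textbf{Tightness.} If $\mx$ has orthonormal columns, then every $\mx_{\mathcal{S}}$ also has orthonormal columns. Hence $\vol(\mx_{\mathcal{S}})=1$ for every $\mathcal{S}$, and both $\log\vol(\mx_k^*)$ and $\log\vol(\mx_k)$ vanish, so both sides of the upper bound in (\ref{e_t2a}) equal zero. These matrices satisfy the hypothesis $\sigma_n(\mx)=1$, so the bound holds with equality.

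The step needing the most care is the inductive identification of $\mP_j$ as the projector onto $\range(\mx_{\mathcal{S}_j})$. Together with full column rank, which keeps $\vy_j\neq 0$ and all logarithms finite, it is what makes Lemma~\ref{l_det} convert the greedy criterion into the pivoting rule. Everything else is a direct citation of earlier results.
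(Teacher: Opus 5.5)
Your proposal is correct and follows the paper's proof. Like the paper, you verify the hypotheses of Theorem~\ref{t_ng} via Theorem~\ref{t_logvol} and Lemma~\ref{l_1}, identify the pivoting rule with the greedy step through $\vol(\mx_{\mathcal{S}_j})=\vol(\mx_{\mathcal{S}_{j-1}})\,\|(\mi-\mP_{j-1})\vx_t\|$, and observe that orthonormal columns make both sides vanish. The paper obtains that identity from the QR recursion $\det(\mr_j)^2=\det(\mr_{j-1})^2 r_{jj}^2$ rather than from Lemma~\ref{l_det}, and your explicit checks of the projector recursion and of the case $\mathcal{S}=\emptyset$ fill in details the paper leaves implicit.
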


\begin{proof}
Theorem~\ref{t_logvol} and Lemma~\ref{l_1} imply that log-volume is a non-negative,
non-decreasing set submodular function on column submatrices of
$\mx\in\rmn$. This justifies that 
we can initialize the volume of the empty matrix
as $\vol(\emptyset)=1$, so that $\log{\vol(\emptyset)}=0$.

Algorithm~\ref{alg_qrcp} selects $k$ columns $\mx_{\mathcal{S}_k}$
of $\mx$. Let $\mx_{\mathcal{S}_k}=\mq\mr$ be a QR decomposition 
where $\mq\in\real^{m\times k}$ has orthonormal columns and $\mr\in\real^{k\times k}$ is nonsingular,
with non-increasing diagonal elements
and $r_{11}\geq \ldots \geq r_{kk}> 0$.

First we show that Algorithm~\ref{alg_qrcp} is a greedy algorithm in the sense of
Algorithm~\ref{alg_proto}. 

\paragraph{Induction basis}
For $j=1$, let  $\mx_{\mathcal{S}_1}\in\real^m$ be a 
column of $\mx$ with QR decomposition is
$\mx_{\mathcal{S}_1}=\mq_1r_{11}$ 
where $\mq_1\in\real^m$ has $\|\mq_1\|=1$ and $r_{11}\geq 0$.
Algorithm~\ref{alg_qrcp} selects the column with largest two-norm, so that among all columns of $\mx$,
\begin{equation*}
r_{11}^2=\|\mx_{\mathcal{S}_1}\|^2=
\det(\mx_{\mathcal{S}_1}^T\mx_{\mathcal{S}_1})=
\vol(\mx_{\mathcal{S}_1})^2
\end{equation*}
is maximal. Let
$\mP_1\equiv \mx_{\mathcal{S}_1}\mx_{\mathcal{S}_1}^{\dagger}=\mq_1\mq_1^T$ be the  orthogonal projector onto
$\range(\mx_{\mathcal{S}_{1}})$.
\smallskip

\paragraph{Induction hypothesis} Assume that for $1\leq j-1<n$, Algorithm~\ref{alg_qrcp} is a greedy algorithm and has selected the $j-1$ linearly 
independent columns 
$\mx_{\mathcal{S}_{j-1}}=\mq_{j-1} \mr_{j-1}$
where $\mq_{j-1}\in\real^{m\times (j-1)}$ 
has orthonormal columns, $\mr_{j-1}\in\real^{(j-1)\times (j-1)}$ is nonsingular upper triangular, and 
\begin{equation*}
r_{11}^2 \cdots r_{j-1,j-1}^2=\det(\mr_{j-1}^T\mr_{j-1})=
\det(\mx_{\mathcal{S}_{j-1}}^T\mx_{\mathcal{S}_{j-1}})=
\vol(\mx_{\mathcal{S}_{j-1}})^2.
\end{equation*}
Let
$\mP_{j-1}\equiv\mx_{\mathcal{S}_{j-1}}\mx_{\mathcal{S}_{j-1}}^{\dagger}=\mq_{j-1}\mq_{j-1}^T$
be the orthogonal projector onto $\range(\mx_{\mathcal{S}_{j-1}})$.
\smallskip

\paragraph{Induction step}
Let $\mx_{\mathcal{S}_j}\equiv\begin{bmatrix} \mx_{\mathcal{S}_{j-1}} & \vx_t\end{bmatrix}=\mq_j\mr_j$ where
\begin{equation*}
\mq_j=\begin{bmatrix} \mq_{j-1} & \vq_j\end{bmatrix}\in\real^{m\times j}, \qquad
\mr_j\equiv \begin{bmatrix} \mr_{j-1} & *\\ \vzero & r_{jj}\end{bmatrix}\in\real^{j\times j},
\end{equation*}
$\mq_j$ has orthonormal columns, and $\vq_jr_{jj}=(\mi-\mP_{j-1})\vx_j$.
Among all columns $i\in\mathcal{T}_{j-1}$, Algorithm~\ref{alg_qrcp} 
selects the column $\vx_t$ with maximal 
$r_{jj}=\|(\mi-\mP_{j-1})\vx_t\|$.
With $\vy_j\equiv (\mi-\mP_{j-1})\vx_t$, set 
\begin{equation*}
\mP_j\equiv \mP_{j-1}+\vy_j\vy_j^{\dagger}=\mq_{j-1}\mq_{j-1}^T
+\vq_j\vq_j^T=\mq_j\mq_j^T=\mx_{\mathcal{S}_j}\mx_{\mathcal{S}_j}^{\dagger}.
\end{equation*}
Because $\mr_j$ is square,
\begin{align*}
\det(\mx_{\mathcal{S}_j}^T\mx_{\mathcal{S}_j})&=\det(\mr_j^T\mr_j)=\det(\mr_j)^2
=\det(\mr_{j-1})^2\>r_{jj}^2
=\det(\mx_{\mathcal{S}_{j-1}}^T\mx_{\mathcal{S}_{j-1}})\> 
r_{jj}^2.
\end{align*}
Since Algorithm~\ref{alg_qrcp} chooses a column $\vx_t$
with largest $r_{jj}$, it is  a greedy algorithm
in the sense of Algorithm~\ref{alg_proto}.
Now apply Theorem~\ref{t_ng} to the log-volume.

The upper bound in~(\ref{e_t2a}) is tight for matrices $\mx$ with orthonormal columns, since $\log{\vol(\mb)}=0$ for any
submatrix $\mb\in\real^{m\times k}$ of $\mx$.
\end{proof}

For $k=1$, $\mx_k=\mx_k^*$ and the bound (\ref{e_t2a}) is zero.

If $\mx$ has singular values less than one,
we multiply by a scalar to increase all singular values sufficiently.
This kind of manipulation is not always recommended \cite[Section 1]{Corne1977} as it makes the relative error not unique. 
However, since Algorithm~\ref{alg_qrcp} is invariant under scalar multiplication, we feel that this is justified.

\begin{corollary}\label{c_2}
Let  $\mx\in\rmn$ have $\rank(\mx)=n$,
and let  $k$ be a fixed integer with  $0<k<n$.
Let $\mx_k^*\in\real^{m\times k}$ be a column submatrix of $\mx$ that maximizes
$\log{\vol(\mb)}$ among all submatrices $\mb\in\real^{m\times k}$ of $\mx$.
The output matrix $\mx_k\equiv\mx_{\mathcal{S}_k}$ from Algorithm~\ref{alg_qrcp} satisfies
\begin{equation*}
0\leq \log{\vol(\mx_k^*)}-\log{\vol(\mx_k)}\leq (1-\tfrac{1}{k})^k\ \log{\vol(\mx_k^*/\sigma_n(\mx))},
\end{equation*}
where $(1-\tfrac{1}{k})^k\leq \tfrac{1}{e}< .37$.
\end{corollary}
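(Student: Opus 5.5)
The plan is to reduce to Theorem~\ref{t_2} by scaling. Set $\tilde{\mx}\equiv \mx/\sigma_n(\mx)$. Since $\rank(\mx)=n$, we have $\sigma_n(\mx)>0$, so $\tilde{\mx}$ is well defined with $\rank(\tilde{\mx})=n$ and singular values $\sigma_i(\tilde{\mx})=\sigma_i(\mx)/\sigma_n(\mx)\geq 1$. Hence $\tilde{\mx}$ satisfies the hypotheses of Theorem~\ref{t_2}.

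The first step is to check that Algorithm~\ref{alg_qrcp} selects the same index set $\mathcal{S}_k$ on $\mx$ and on $\tilde{\mx}$. The projectors $\mP_{j-1}$ depend only on the ranges of the selected columns, so they are unchanged by a positive scalar. The quantities $\|(\mi-\mP_{j-1})\vx_i\|$ are all multiplied by the same factor $1/\sigma_n(\mx)$, so the maximizing index $t$ is the same at every step. If the maximum is not unique, we use the same tie-breaking rule for both matrices. Similarly, every $m\times k$ column submatrix $\mb$ of $\mx$ satisfies $\vol(\mb/\sigma_n(\mx))=\vol(\mb)/\sigma_n(\mx)^k$. Therefore $\mx_k^*/\sigma_n(\mx)$ maximizes the log-volume among $k$-column submatrices of $\tilde{\mx}$, and the output for $\tilde{\mx}$ is $\mx_k/\sigma_n(\mx)$.

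Next, apply (\ref{e_t2a}) to $\tilde{\mx}$:
\begin{equation*}
0\leq \log\vol(\mx_k^*/\sigma_n(\mx))-\log\vol(\mx_k/\sigma_n(\mx))\leq (1-\tfrac{1}{k})^k\log\vol(\mx_k^*/\sigma_n(\mx)).
\end{equation*}
The middle difference equals $\log\vol(\mx_k^*)-\log\vol(\mx_k)$, because the term $-k\log\sigma_n(\mx)$ cancels. This gives the claimed bound, and the constant bound $(1-\tfrac{1}{k})^k\leq 1/e<.37$ is carried over from Theorem~\ref{t_2}.

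There is no substantial obstacle. The only points to state carefully are the invariance of the pivot choices under positive scaling, including the tie-breaking convention, and the fact that the $k\log\sigma_n(\mx)$ shift cancels in the difference but not on the right-hand side.
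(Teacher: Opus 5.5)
Your proposal is correct and follows essentially the same route as the paper. The paper also scales by $\alpha = 1/\sigma_n(\mx)$ so that Lemma~\ref{l_1} and Theorem~\ref{t_2} apply, invokes the invariance of Algorithm~\ref{alg_qrcp} under positive scalar multiplication, and notes that the $k\log\alpha$ shift cancels in the absolute error but not on the right-hand side. Your explicit remarks on the unchanged projectors and the tie-breaking convention are a slightly more careful version of the paper's one-line invariance claim.
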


\begin{proof}
To scale matrices so that the log-volume is a non-negative
non-decreasing function,
let the singular values $\sigma_1(\mx)\geq \cdots \geq \sigma_n(\mx)\geq\tfrac{1}{\alpha}>0$ for some $\alpha>0$.
Since $\sigma_1(\alpha\mx)\geq\cdots\geq \sigma_n(\alpha\mx)\geq 1$, Lemma~\ref{l_1} implies that
$\log\vol(\alpha\mx)\geq 0$ and non-decreasing, that is:
If $\mathcal{S}\subset\mathcal{T}\subset\{1,\ldots, n\}$, then
$0\leq\log{\vol(\alpha\mx_{\mathcal{S}})}\leq \log{\vol(\alpha\mx_{\mathcal{T}})}$.

Furthermore, Algorithm~\ref{alg_qrcp} is invariant under multiplication by a scalar $\alpha>0$
and so is the absolute error,
\begin{align*}
\log{\vol(\alpha\mx_k^*)}-\log{\vol(\alpha\mx_k)}&=(k\log{\alpha}+\log{\vol(\mx_k^*)})-(k\log{\alpha}+\log{\vol(\mx_k)})\\
 &=  \log{\vol(\mx_k^*)}-\log{\vol(\mx_k)}.
\end{align*}
Choose $\alpha=1/\sigma_n(\mx)$. This ensures that $\log{\vol(\mb)}\geq 0$ for any submatrix $\mb\in\real^{m\times k}$ 
of $\mx$. At last apply Theorem~\ref{t_2} to the scaled
matrix $\mx/\sigma_n(\mx)$.
\end{proof}

\begin{remark}\label{r_2}
If all singular values of $\mx$ are the same, i.e. $\sigma_1(\mx)=\sigma_n(\mx)$, then any submatrix $\mb\in\real^{m\times k}$ of $\mx$
maximizes the volume, and satisfies
$\log{\vol(\mx_k^*)}=\log{\vol(\mb)}$.
Thus, Algorithm~\ref{alg_qrcp} terminates with an optimal solution in $k$ steps. The scaling in Corollary~\ref{c_2}
ensures a zero error bound, with $\log{\vol(\mx_k^*/\sigma_n(\mx))}=0$,
and thus the tightness of the bound.

If $\sigma_n(\mx)<1$, the scaling in Corollary~\ref{c_2} guarantees that the log-volume of the scaled matrix is non-negative.
If $\sigma_n(\mx)>1$, then the scaling lowers the bound on the absolute error.
\end{remark}

\subsection{Log-volume for general matrices}\label{s_BG3}
General, possibly rank deficient matrices require a different approach because Theorem~\ref{t_2} and Corollary~\ref{c_2} apply 
only to full-rank matrices.
We apply smoothed analysis, which perturbs the matrix by a small amount with a Gaussian random matrix, to square matrices
(Theorem~\ref{thm:smoothed}) and then discuss extensions to general matrices (Remark~\ref{r_logvol_gen}).

Perturb the square matrix $\mx \in \real^{n\times n}$ by a standard Gaussian random matrix\footnote{The elements of a standard Gaussian random matrix are independent Gaussian random variables with zero mean and variance~1.} $\mg$  to $\mz \equiv \mx + \epsilon\mg$ where $\epsilon > 0$.
Then $\mz$ 
is nonsingular with high probability
\cite{sankar2006smoothed}, so that we can apply Corollary~\ref{c_2} to $\mz$.

Theorem~\ref{thm:smoothed} shows that for sufficiently small perturbation $\epsilon$, the conclusions of Corollary~\ref{c_2} hold with a few modifications. The condition on $\epsilon$ ensures that the selected columns $\mx_k$ have full rank with high probability, since the corresponding columns of $\mz_k$ do. 

\begin{theorem}\label{thm:smoothed}
Let $\mx \in \rnn$ and let $k$ be a fixed integer
with $0 < k \leq \rank(\mx)$. Let $\mx_k^*\in\real^{n\times k}$
be a column submatrix of $\mx$ that maximizes
$\log{\vol(\mb)}$ among all submatrices $\mb\in\real^{n\times k}$ of $\mx$. 

Define the perturbation $\mz\equiv \mx+\epsilon\mg$,
where $\epsilon>0$ and $\mg$ is a standard Gaussian random matrix.   Let $\mz_k^*\in\real^{n\times k}$
be a column submatrix of $\mz$ that maximizes
$\log{\vol(\mb)}$ among all submatrices $\mb\in\real^{n\times k}$ of $\mz$. Let $\mz_k$ be the output from Algorithm~\ref{alg_qrcp}, and $\mx_k$ the corresponding columns of $\mx_k$.

Let $0 < \delta < 1$ represent the failure probability and define
\[ \Delta_1 \equiv \frac{\delta\epsilon}{4.7\sqrt{n}}, \qquad \Delta_2 \equiv 2\sqrt{n}  + \sqrt{2\log(4/\delta)}.  \] 

If
$$0 < \epsilon < \min\left\{\frac{\sigma_{k}(\mx_k^*)}{2\Delta_2}, \frac{\sigma_k(\mx)}{\Delta_2(1 + 2^{k+1}\sqrt{n-k})} \right\}$$ then, with probability at least $1-\delta$, 
\[\begin{aligned} 0 \le \log\vol(\mx_k^*)- \log\vol(\mx_k) \le & \>  \frac1e \log\vol(\mx_k^*/\Delta_1)  + k(2\eta_2 + \eta_1),
\end{aligned}\]
where $\eta_1 \equiv \epsilon\Delta_2/\sigma_{k}(\mz_k) \le 1/2$ and $\eta_2 \equiv \epsilon\Delta_2/\sigma_{k}(\mx_k^*) \le 1/2$. 
\end{theorem}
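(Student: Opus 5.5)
The plan is to move the comparison from $\mx$ to $\mz$, apply Corollary~\ref{c_2} to $\mz$, and then move back, paying an additive price in each transfer.

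\textbf{Probabilistic ingredients.} Two events, each with failure probability at most $\delta/2$, should give everything.

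\emph{Upper tail of $\mg$.} By Gaussian concentration, $\|\mg\|_2\le 2\sqrt n+\sqrt{2\log(4/\delta)}=\Delta_2$. By Weyl's inequality~\eqref{eqn:weyl}, every column submatrix $\mb$ of $\mx$ and the corresponding submatrix $\mb+\epsilon\mg_{\mathcal S}$ of $\mz$ then satisfy $|\sigma_j(\mb+\epsilon\mg_{\mathcal S})-\sigma_j(\mb)|\le\epsilon\Delta_2$.

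\emph{Lower tail.} Sankar--Spielman--Teng~\cite{sankar2006smoothed} gives $\mathbb P(\sigma_n(\mz)\le x)\le 2.35\sqrt n\,x/\epsilon$. Taking $x=\Delta_1=\delta\epsilon/(4.7\sqrt n)$ makes this $\delta/2$, so $\sigma_n(\mz)\ge\Delta_1>0$. Hence $\mz$ is nonsingular and Corollary~\ref{c_2} applies with scaling by at most $1/\Delta_1$.

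\textbf{Chain of inequalities.} Write
\[
\log\vol(\mx_k^*)-\log\vol(\mx_k)=\bigl[\log\vol(\mx_k^*)-\log\vol(\mz_k^*)\bigr]+\bigl[\log\vol(\mz_k^*)-\log\vol(\mz_k)\bigr]+\bigl[\log\vol(\mz_k)-\log\vol(\mx_k)\bigr].
\]

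\emph{First bracket.} Let $\tilde\mz$ be the columns of $\mz$ with the same indices as $\mx_k^*$. Then $\log\vol(\mz_k^*)\ge\log\vol(\tilde\mz)$, and
\[
\sigma_j(\tilde\mz)\ge\sigma_j(\mx_k^*)-\epsilon\Delta_2\ge\sigma_j(\mx_k^*)\bigl(1-\epsilon\Delta_2/\sigma_k(\mx_k^*)\bigr)=\sigma_j(\mx_k^*)(1-\eta_2).
\]
Since $\eta_2\le1/2$ by the first condition on $\epsilon$, we have $-\log(1-\eta_2)\le2\eta_2$. So the first bracket is at most $2k\eta_2$.

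\emph{Third bracket.} Symmetrically, $\sigma_j(\mz_k)\le\sigma_j(\mx_k)+\epsilon\Delta_2$, which gives
\[
\log\sigma_j(\mz_k)-\log\sigma_j(\mx_k)\le-\log\bigl(1-\epsilon\Delta_2/\sigma_j(\mz_k)\bigr).
\]
This needs $\eta_1=\epsilon\Delta_2/\sigma_k(\mz_k)\le1/2$, and it yields the bound $2k\eta_1$. The stated bound has $k\eta_1$, so I would instead compare $\sigma_j(\mx_k)\ge\sigma_j(\mz_k)-\epsilon\Delta_2$ using $\log(1+u)\le u$ in the appropriate direction, or accept the constant. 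Either way this bracket contributes $O(k\eta_1)$.

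\emph{Second bracket.} Corollary~\ref{c_2} gives at most $\frac1e\log\vol(\mz_k^*/\sigma_n(\mz))\le\frac1e\log\vol(\mz_k^*/\Delta_1)$. Then bound $\log\vol(\mz_k^*)$ by $\log\vol(\mx_k^*)$ plus a perturbation term, absorbing that term into the $2k\eta_2$ budget or arguing via optimality.

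\textbf{Main obstacle: showing $\eta_1\le1/2$.} This is where the second condition on $\epsilon$ enters. Lemma~\ref{lem:busgolub} applied to $\mz$ gives
\[
\sigma_k(\mz_k)\ge\frac{\sigma_k(\mz)}{2^k\sqrt{n-k}}\ge\frac{\sigma_k(\mx)-\epsilon\Delta_2}{2^k\sqrt{n-k}}.
\]
Requiring this to be at least $2\epsilon\Delta_2$ rearranges to $\epsilon\Delta_2(1+2^{k+1}\sqrt{n-k})\le\sigma_k(\mx)$, which is exactly the hypothesis. A consequence is that $\mx_k$ has full column rank, since $\sigma_k(\mx_k)\ge\sigma_k(\mz_k)-\epsilon\Delta_2>0$, so the log-volumes are finite.

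I expect the delicate part to be the bookkeeping in the second bracket, since $\log\vol(\mz_k^*)$ must be controlled by $\mx_k^*$ from above rather than below. I would handle it through optimality of $\mz_k^*$ together with the Weyl upper bound $\sigma_j(\mz_k^*)\le\sigma_j(\mx_{\mathcal S})+\epsilon\Delta_2$ for the same index set $\mathcal S$, and fold the resulting $O(k\eta)$ terms into the stated constants.

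Finally, a union bound over the two events gives success probability at least $1-\delta$. The lower bound $0\le\cdot$ is immediate from the optimality of $\mx_k^*$.
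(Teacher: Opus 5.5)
Your outline is the paper's: the same two events, the same three-way split through $\mz_k^*$, Corollary~\ref{c_2} applied to $\mz$, and the Gu--Eisenstat bound $\sigma_k(\mz_k)\ge\sigma_k(\mz)/(2^k\sqrt{n-k})$ to get $\eta_1\le 1/2$ and $\rank(\mx_k)=k$.

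The genuine gap is the second bracket. There you need an \emph{upper} bound $\log\vol(\mz_k^*)\le\log\vol(\mx_k^*)+O(k\eta_2)$, and the route you sketch does not give it. If $\mathcal{S}$ indexes $\mz_k^*$, Weyl gives $\sigma_j(\mz_k^*)\le\sigma_j(\mx_{\mathcal{S}})+\epsilon\Delta_2$. Converting the additive $\epsilon\Delta_2$ into a factor $1+\eta_2$ requires $\sigma_j(\mx_{\mathcal{S}})\ge\sigma_k(\mx_k^*)$, and nothing guarantees that. The matrix $\mx_{\mathcal{S}}$ may have a tiny or even zero $k$th singular value, and optimality of $\mx_k^*$ bounds only $\prod_j\sigma_j(\mx_{\mathcal{S}})$, not $\prod_j(\sigma_j(\mx_{\mathcal{S}})+\epsilon\Delta_2)$. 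The paper handles this step with ``similarly'', and the same objection applies there. Lemma~\ref{lemma:logvolperturb} with $\mz_k^*$ as base matrix yields a term $\epsilon\Delta_2/\sigma_k(\mz_k^*)$, not $\eta_2$, and only under the unverified hypothesis that this is at most $1/2$.

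Your first bracket also already spends the whole $2k\eta_2$ budget, so there is nothing left to fold a second perturbation term into. The repair is to never compare with $\mz_k^*$. Theorem~\ref{t_ng} applied to $f(\mathcal{T})=\log\vol(\mz_{\mathcal{T}}/\sigma_n(\mz))\ge 0$ gives $f(\mathcal{S}_k)\ge(1-\frac1e)f(\mathcal{T})$ for \emph{every} $k$-subset $\mathcal{T}$. Take $\mathcal{T}$ to be the index set of $\mx_k^*$ and set $\mz_k'\equiv\mz_{\mathcal{T}}$. This rearranges to
\[
\log\vol(\mx_k^*)-\log\vol(\mz_k)\le\tfrac1e\log\vol(\mx_k^*/\sigma_n(\mz))+\bigl(1-\tfrac1e\bigr)\bigl[\log\vol(\mx_k^*)-\log\vol(\mz_k')\bigr].
\]
This uses only the lower bound on $\vol(\mz_k')$ you already proved. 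Your first two brackets together then cost $\frac1e\log\vol(\mx_k^*/\Delta_1)+2(1-\frac1e)k\eta_2$, which fits the stated $2k\eta_2$.

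Your hope of recovering $k\eta_1$ in the third bracket ``in the appropriate direction'' also fails. With $\eta_1$ normalized by $\sigma_k(\mz_k)$, Weyl gives at best $-k\log(1-\eta_1)$, which exceeds $k\eta_1$. Normalizing by $\sigma_k(\mx_k)\ge\sigma_k(\mz_k)(1-\eta_1)$ and using $\log(1+u)\le u$ gives $k\eta_1/(1-\eta_1)$, which is no better. The paper's coefficient $1$ comes from the lower bound in Lemma~\ref{lemma:logvolperturb}, whose proof needs $\log(1-\tau)\ge-\tau$. That inequality is false for every $\tau>0$; the correct version is $\log(1-\tau)\ge-\tau/(1-\tau)\ge-2\tau$ for $\tau\le 1/2$. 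So the argument, repaired as above, proves the stated inequality with $k(2\eta_2+\eta_1)$ replaced by $k(2\eta_2+2\eta_1)$. You should claim that bound rather than the paper's constant.
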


\begin{proof}
    See Appendix~\ref{app_a}.
\end{proof}

Compared to Corollary~\ref{c_2}, the absolute error in Theorem~\ref{thm:smoothed} has an additional term bounded by $3k/2$ and the smallest singular value is replaced by $\Delta_1$.
This could
possibly make Theorem~\ref{thm:smoothed} pessimistic in practice.

Theorem~\ref{thm:smoothed} relies on a bound for the smallest singular value of $\mz$, which only holds for square matrices~\cite{sankar2006smoothed}. Remark~\ref{r_logvol_gen}
proposes extensions to rectangular matrices. 

\begin{remark}\label{r_logvol_gen}
Consider the cases $m>n$ and $m<n$.
\begin{itemize}
\item If $m > n$, compute a thin-QR factorization $\mx = \mq\bmat{\mr \\ \boldsymbol{0}}$. Since the columns of $\mr \in \real^{n\times n}$ have the same volume as the corresponding columns of $\mx$, we can apply Theorem~\ref{thm:smoothed} to $\mr$ instead.
\item If $m < n$, apply Theorem~\ref{thm:smoothed} to the padded matrix $\mx_{\rm pad} = \bmat{\mx \\ \boldsymbol{0}} \in \real^{n\times n}$, whose columns have the same volume as the corresponding columns of $\mx$.
\end{itemize}
\end{remark}

%% file: sec5.tex
\section{A 1-interchange algorithm for column subset selection}\label{sec:geqr}
We show that the Gu-Eisenstat QR algorithm 
(Algorithm~\ref{alg_gue}) is a 1-interchange
algorithm in the sense of Algorithm~\ref{alg_proto2} for maximizing
the log-volume of a full column-rank matrix
(Section~\ref{s_GE1}). A smoothed analysis extends this to general matrices (Section~\ref{s_GE_23}).

\begin{algorithm}[!t]
\caption{Conceptual version of Gu-Eisenstat QR \cite{GuEis96}}\label{alg_gue}
 \begin{algorithmic}[1]
\REQUIRE $\mx\in\rmn$, $0<k\leq\rank(\mx)$, relaxation factor
$\eta\geq 1$
\medskip

\STATE \COMMENT{Initialization: Select $k$ linearly independent columns of $\mx$}
\STATE Find submatrix $\mx_{\mathcal{S}}\in\real^{m\times k}$ of $\mx$ with $\rank(\mx_{\mathcal{S}})=k$
\smallskip

\REPEAT
\STATE \COMMENT{Swap column $i\in\mathcal{S}$  with column $j\in\{1,\ldots, n\}-\mathcal{S}$}
\STATE $\mathcal{T}=(\mathcal{S}-\{i\})\cup\{j\}$
\medskip

\STATE \COMMENT{Did the swap increase the volume?}
\IF{$\vol(\mx_{\mathcal{T}})>\eta \vol(\mx_{\mathcal{S}})$}
\STATE $\mathcal{S}=\mathcal{T}\qquad$ \COMMENT{Keep the changes}
\ENDIF
\UNTIL{no more increase in $\vol(\mx_{\mathcal{S}})$}
\bigskip

\RETURN $m\times k$ column submatrix $\mx_{\mathcal{S}}$ 
\end{algorithmic}
\end{algorithm}

Algorithm~\ref{alg_gue} represents a conceptual version of the Gu-Eisenstat QR factorization
\cite[Algorithm 5]{GuEis96}, with a focus
on selecting $k$ columns rather than computing a QR factorization.
Algorithm~\ref{alg_gue}
starts with an initial set of $k$ columns that are linearly independent.
Without this initialization, the first swap could still produce a rank deficient submatrix 
with zero volume, thus the algorithm would make no progress and terminate. 
The relaxation factor $\eta=1$ represents an ideal version, where Algorithm~\ref{alg_gue} is continued until the volume of the 
leading $k$ columns stops increasing.

\subsection{Log-volume for full column-rank matrices}\label{s_GE1}
We show that Algorithm~\ref{alg_gue} 
is a 1-interchange algorithm for maximizing the log-volume of a full column-rank matrix in two ways:
in the sense of Algorithm~\ref{alg_proto2}
if the relaxation factor $\eta=1$
(Theorem~\ref{t_3} and Corollary~\ref{c_3}); and in the 
sense of Theorem~\ref{t_53} if the relaxation factor
$\eta>1$ (Theorem~\ref{c_54})
in which case we can bound the number of swaps 
(Theorem~\ref{t_55}).

Theorem~\ref{t_3} below shows that
Algorithm~\ref{alg_gue} with relaxation factor $\eta=1$
applied to a full column-rank
matrix with sufficiently large singular
values
is an instance of the $R$-interchange algorithm with $R=1$ \cite[Section 5]{Nemhauser1978}, that
approximates the maximal log-volume with an error of at most 50 percent.

\begin{theorem}\label{t_3}
Let  $\mx\in\rmn$ have $\rank(\mx)=n$ with singular
values $\sigma_1(\mx)\geq \cdots \geq \sigma_n(\mx)\geq 1$, and
let  $k$ be a fixed integer with  $0<k< n$.
Let $\mx_k^*\in\real^{m\times k}$ be a column submatrix of $\mx$ that maximizes
$\log{\vol(\mb)}$ among all submatrices $\mb\in\real^{m\times k}$ of $\mx$. 

Algorithm~\ref{alg_gue} with relaxation factor $\eta=1$ 
is a 1-interchange algorithm in the sense of Algorithm~\ref{alg_proto2} for maximizing the log-volume, and its output matrix $\mx_k\equiv \mx_{\mathcal{S}}$
satisfies
\begin{equation}\label{e_t3a}
0\leq \log{\vol(\mx_k^*)}-\log{\vol(\mx_k)}\leq \frac{k-1}{2k-1}\ \log{\vol(\mx_k^*)},
\end{equation}
where $\frac{k-1}{2k-1}<\frac{1}{2}$. 

The upper bound in~(\ref{e_t3a}) is tight for matrices $\mx$ with orthonormal columns. 
\end{theorem}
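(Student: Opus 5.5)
The plan mirrors the proof of Theorem~\ref{t_2}: verify the hypotheses of Theorem~\ref{t_ng2} for $f(\mathcal{S})\equiv\log\vol(\mx_{\mathcal{S}})$, then check that Algorithm~\ref{alg_gue} with $\eta=1$ is literally Algorithm~\ref{alg_proto2} applied to this $f$.

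\textbf{Step 1: $f$ satisfies the hypotheses of Theorem~\ref{t_ng2}.} By Theorem~\ref{t_logvol}, $f$ is set submodular on column submatrices of $\mx$. Since $\rank(\mx)=n$ and $\sigma_n(\mx)\geq 1$, Lemma~\ref{l_1} shows that $f$ is non-negative and non-decreasing. As in Theorem~\ref{t_2}, set $\vol(\emptyset)=1$, so that $f(\emptyset)=0$.

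\textbf{Step 2: identify the algorithm.} Full column rank means every $k$-subset $\mathcal{S}$ gives $\rank(\mx_{\mathcal{S}})=k$. Hence any initialization in line~2 of Algorithm~\ref{alg_gue} is an admissible $\mathcal{S}^{(0)}$, and every volume is positive, so logarithms are finite. Because $\log$ is strictly increasing, the swap test $\vol(\mx_{\mathcal{T}})>\vol(\mx_{\mathcal{S}})$ with $\eta=1$ is equivalent to
\[f((\mathcal{S}-\{i\})\cup\{j\})>f(\mathcal{S}).\]
This is exactly the while-condition in line~2 of Algorithm~\ref{alg_proto2}. Accepted swaps coincide, and both algorithms terminate when no single interchange increases $f$. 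Termination itself is automatic: $f$ strictly increases over the finitely many $k$-subsets.

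\textbf{Step 3: apply Theorem~\ref{t_ng2}.} The output $\mathcal{S}$ is the output of Algorithm~\ref{alg_proto2}, so Theorem~\ref{t_ng2} gives \eqref{e_t3a} directly, with $\log\vol(\mx_k^*)=f(\mathcal{S}_k^*)$ since $\mx_k^*$ maximizes $f$ over $k$-subsets.

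\textbf{Step 4: tightness.} If $\mx$ has orthonormal columns, every $k$-column submatrix $\mb$ has all singular values equal to $1$, so $\log\vol(\mb)=0$. Both sides of \eqref{e_t3a} are then zero, and the upper bound holds with equality.

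\textbf{Main obstacle.} There is no genuinely hard step. The only point needing care is Step~2: stating the equivalence of the termination and acceptance criteria precisely, and noting that the paper's Algorithm~\ref{alg_proto2} returns the final locally optimal set, so that Theorem~\ref{t_ng2}'s local-optimality hypothesis holds for the output of Algorithm~\ref{alg_gue}.
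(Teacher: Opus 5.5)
Your proposal is correct and follows the paper's proof, which simply applies Theorem~\ref{t_ng2} to $f(\mathcal{S})=\log\vol(\mx_{\mathcal{S}})$, with Theorem~\ref{t_logvol} and Lemma~\ref{l_1} supplying submodularity, non-negativity and monotonicity. Your Step~2 shows that the $\eta=1$ swap test is equivalent to the while-condition of Algorithm~\ref{alg_proto2}, and your Step~4 handles the orthonormal-columns tightness case; both just spell out what the paper leaves implicit.
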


\begin{proof}
This follows from Theorem~\ref{t_ng2} applied to the function $f(\mathcal{S}) = \log\vol(\mx_{\mathcal{S}})$ and Lemma~\ref{l_1}.
\end{proof}

For $k=1$, $\mx_k=\mx_k^*$ and the bound (\ref{e_t3a})
is equal to zero.

Corollary~\ref{c_3} presents a bound, based on scaling, for full column-rank matrices 
with general singular values, and the observations from Remark~\ref{r_2} apply here as well.

\begin{corollary}\label{c_3}
Let  $\mx\in\rmn$ with $\rank(\mx)=n$, and
let  $k$ be a fixed integer with  $0<k<n$.
Let $\mx_k^*\in\real^{m\times k}$ be a column submatrix of~$\mx$ that maximizes
$\log{\vol(\mb)}$ among all submatrices $\mb\in\real^{m\times k}$ of $\mx$. The output
matrix $\mx_k\equiv\mx_{\mathcal{S}}$ from Algorithm~\ref{alg_gue} with relaxation
factor $\eta=1$
satisfies
\begin{equation*}
0\leq \log{\vol(\mx_k^*)}-\log{\vol(\mx_k)}\leq 
\frac{k-1}{2k-1}\ \log{\vol(\mx_k^*/\sigma_n(\mx))},
\end{equation*}
where $\frac{k-1}{2k-1}<\frac{1}{2}$.
\end{corollary}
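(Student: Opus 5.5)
The plan is to mirror the proof of Corollary~\ref{c_2}: rescale $\mx$ so that Theorem~\ref{t_3} applies, and then check that neither the algorithm nor the absolute error notices the scaling.

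Set $\alpha\equiv 1/\sigma_n(\mx)>0$, which is well defined because $\rank(\mx)=n$. The scaled matrix $\alpha\mx$ has $\rank(\alpha\mx)=n$ and singular values $\sigma_i(\alpha\mx)=\alpha\sigma_i(\mx)\geq 1$. Lemma~\ref{l_1} and Theorem~\ref{t_logvol} then show that $\log\vol$ is a non-negative, non-decreasing set submodular function on column submatrices of $\alpha\mx$. This is exactly the hypothesis of Theorem~\ref{t_3}.

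Next I would show that Algorithm~\ref{alg_gue} with $\eta=1$ behaves identically on $\mx$ and on $\alpha\mx$.
\begin{itemize}
\item For every $k$-column index set $\mathcal{S}$ we have $\vol(\alpha\mx_{\mathcal{S}})=\alpha^k\vol(\mx_{\mathcal{S}})$.
\item Hence the swap test $\vol(\mx_{\mathcal{T}})>\vol(\mx_{\mathcal{S}})$ holds for $\mx$ if and only if it holds for $\alpha\mx$.
\item Linear independence of the initial columns is also unaffected by scaling.
\end{itemize}
So, for the same initialization and the same order of trial swaps, both runs perform the same swaps and return the same index set $\mathcal{S}$. For the same reason, the maximizer $\mx_k^*$ for $\mx$ scales to a maximizer $\alpha\mx_k^*$ for $\alpha\mx$.

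The absolute error is also scale invariant:
\begin{equation*}
\log\vol(\alpha\mx_k^*)-\log\vol(\alpha\mx_k)=(k\log\alpha+\log\vol(\mx_k^*))-(k\log\alpha+\log\vol(\mx_k))=\log\vol(\mx_k^*)-\log\vol(\mx_k).
\end{equation*}
Applying (\ref{e_t3a}) to $\alpha\mx$ bounds this difference by $\frac{k-1}{2k-1}\log\vol(\alpha\mx_k^*)=\frac{k-1}{2k-1}\log\vol(\mx_k^*/\sigma_n(\mx))$, which is the claimed bound. The lower bound $0$ holds because $\mx_k^*$ is a maximizer. The inequality $\frac{k-1}{2k-1}<\frac12$ is elementary.

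The only step needing care is the invariance of the algorithm's trajectory under scaling, in particular that the termination condition "no more increase" is unchanged. Since every comparison in Algorithm~\ref{alg_gue} is between two volumes of the same column count $k$, and each such volume is multiplied by the same factor $\alpha^k$, this follows directly.
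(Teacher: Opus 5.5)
Your proposal is correct and follows the paper's route: the paper proves this corollary by applying Theorem~\ref{t_3} to the scaled matrix $\mx/\sigma_n(\mx)$, using the same invariance of the algorithm and of the absolute error under scaling that it spells out in the proof of Corollary~\ref{c_2}. Your check that each swap test compares two $k$-column volumes multiplied by the same factor $\alpha^k$ supplies the detail the paper leaves implicit.
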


\begin{proof}
This
follows from Theorem~\ref{t_3} applied to the scaled matrix $\mx/\sigma_n(\mx)$.
\end{proof}

Theorem~\ref{c_54} shows that Algorithm~\ref{alg_gue}
with relaxation
factor $\eta > 1$ is a 1-interchange algorithm for maximizing the log-volume,
and implies that the larger~$\eta$, the
larger the error in the log-volume.

\begin{theorem}\label{c_54}
Let  $\mx\in\rmn$ have $\rank(\mx)=n$,
and let  $k$ be a fixed integer with  $0<k<n$.
Let $\mx_k^*\in\real^{m\times k}$ be a column submatrix of $\mx$ that maximizes
$\log{\vol(\mb)}$ among all submatrices $\mb\in\real^{m\times k}$ of $\mx$. 

Algorithm~\ref{alg_gue} with relaxation factor $\eta>1$
is a 1-interchange algorithm in the sense of Theorem~\ref{t_53}, and its output
matrix~$\mx_k\equiv\mx_{\mathcal{S}}$ satisfies
\begin{equation*}
0\leq \log{\vol(\mx_k^*)}-\log{\vol(\mx_k)}
\leq \frac{k-1}{2k-1}\log\vol(\mx_k^*/\sigma_n(\mx)) + \frac{k^2}{2k-1}\,\log(\eta),
\end{equation*}
where $\frac{k-1}{2k-1}<\frac{1}{2}$ and
$\frac{k^2}{2k-1}\leq k$.
\end{theorem}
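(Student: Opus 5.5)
We reduce to the additive case of Theorem~\ref{t_53} by a scaling argument, as in Corollary~\ref{c_2} and Corollary~\ref{c_3}. Set $\alpha\equiv 1/\sigma_n(\mx)$ and $\widetilde{\mx}\equiv\alpha\mx$. Then $\sigma_n(\widetilde{\mx})=1$, so the singular values of $\widetilde{\mx}$ are all at least one. Define
\[
f(\mathcal{S})\equiv\log\vol(\widetilde{\mx}_{\mathcal{S}}), \qquad f(\emptyset)\equiv 0 .
\]
Theorem~\ref{t_logvol} makes $f$ set submodular, and Lemma~\ref{l_1} makes it non-negative and non-decreasing. So $f$ meets every hypothesis of Theorem~\ref{t_53}.

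Next we show that Algorithm~\ref{alg_gue} with $\eta>1$ is Algorithm~\ref{alg_proto2} with additive relaxation factor $\theta_a=\log\eta$. For any $k$-subsets $\mathcal{S},\mathcal{T}$ we have $\vol(\widetilde{\mx}_{\mathcal{T}})=\alpha^k\vol(\mx_{\mathcal{T}})$, and likewise for $\mathcal{S}$. Hence the swap test
\[
\vol(\mx_{\mathcal{T}})>\eta\,\vol(\mx_{\mathcal{S}})
\quad\Longleftrightarrow\quad
f(\mathcal{T})>f(\mathcal{S})+\log\eta ,
\]
which is exactly condition~(\ref{e_add}). Both sides are positive because the initial $\mx_{\mathcal{S}}$ has rank $k$ and accepted swaps only increase volume, so taking logarithms is legitimate. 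The algorithm also terminates: each accepted swap multiplies the volume by more than $\eta>1$, and there are only finitely many subsets, so no subset is revisited. The output is therefore locally optimal in the sense $f((\mathcal{S}_k-\{x\})\cup\{y\})\le f(\mathcal{S}_k)+\theta_a$. The algorithm is also scale-invariant, so running it on $\mx$ or on $\widetilde{\mx}$ selects the same columns.

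Now apply the additive bound of Theorem~\ref{t_53} to $f$ with $\theta_a=\log\eta$:
\[
0\le f(\mathcal{S}_k^*)-f(\mathcal{S}_k)\le \frac{k-1}{2k-1}\,\log\vol(\mx_k^*/\sigma_n(\mx))+\frac{k^2}{2k-1}\log\eta .
\]
The optimal set $\mathcal{S}_k^*$ is the same for $\mx$ and $\widetilde{\mx}$. The difference $f(\mathcal{S}_k^*)-f(\mathcal{S}_k)$ equals $\log\vol(\mx_k^*)-\log\vol(\mx_k)$, because the $k\log\alpha$ terms cancel, exactly as in the proof of Corollary~\ref{c_2}. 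This gives the stated inequality. The constants $\frac{k-1}{2k-1}<\frac12$ and $\frac{k^2}{2k-1}\le k$ are those of Theorem~\ref{t_53}; the latter holds since $k^2\le k(2k-1)$.

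The main point needing care is the identification of Algorithm~\ref{alg_gue} with the additive-relaxation interchange. In particular, the output must satisfy the local-optimality inequality used in the proof of Theorem~\ref{t_53}, namely $f(\mathcal{S}_{k-1}\cup\{s_j^*\})\le f(\mathcal{S}_k)+\theta_a$. This follows because $\mathcal{S}_{k-1}\cup\{s_j^*\}$ is a single swap of $\mathcal{S}_k$ when $s_j^*\notin\mathcal{S}_k$, and is $\mathcal{S}_k$ itself otherwise. The algorithm's stopping rule "until no more increase" must be read as "until no swap passes the $\eta$-test", which is what guarantees this inequality.
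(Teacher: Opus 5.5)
Your proposal is correct and takes the same route as the paper, whose proof is the one-line reduction to the additive case of Theorem~\ref{t_53} with $\theta_a=\log\eta$. You additionally spell out the steps the paper leaves implicit: the scaling by $1/\sigma_n(\mx)$ as in Corollary~\ref{c_3}, the equivalence of the $\eta$-swap test with condition~(\ref{e_add}), and the local-optimality inequality used inside the proof of Theorem~\ref{t_53}.
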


\begin{proof}
This follows from Theorem~\ref{t_53} with additive
relaxation factor $\theta_a = \log \eta$.
\end{proof}

An initialization of Algorithm~\ref{alg_gue} with Algorithm~\ref{alg_qrcp}
is recommended in \cite[Section 5.2]{damle2025estimating},
to reduce the number of swaps.
However \cite[Section 5]{Nemhauser1978} cautions that
a greedy initialization may not improve worst case behavior.
Nevertheless, Theorem~\ref{t_55} below bounds the number of swaps.

\begin{theorem}\label{t_55}
Let  $\mx\in\rmn$ have $\rank(\mx)=n$,
and let  $k$ be a fixed integer with  $0<k<n$.
Let $\mx_k^*\in\real^{m\times k}$ be a column submatrix of $\mx$ that maximizes
$\log{\vol(\mb)}$ among all submatrices $\mb\in\real^{m\times k}$ of $\mx$.

If line~1 of Algorithm~\ref{alg_gue} with 
relaxation factor $\eta>1$ is initialized with 
the output matrix $\mx_{\mathcal{S}_k}$ from Algorithm~\ref{alg_qrcp}, the maximal number
of swaps is
\begin{equation*}\label{eqn:gueisbound}
q \le \frac{\log(\vol(\mx_k^*/\sigma_n(\mx))}{e\,\log(\eta)} \le 
 \frac{k}{e}\log_\eta(\kappa_2(\mx)).
 \end{equation*}
\end{theorem}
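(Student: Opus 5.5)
The argument is a potential-function count of the swaps. We work with the scaled matrix $\mx/\sigma_n(\mx)$. Algorithm~\ref{alg_gue} and the ratio test in its line~7 are unaffected by scalar multiplication, so the sequence of swaps does not change. For the scaled matrix, Lemma~\ref{l_1} and Theorem~\ref{t_logvol} make $f(\mathcal{S})=\log\vol(\mx_{\mathcal{S}}/\sigma_n(\mx))$ a non-negative, non-decreasing, set submodular function with $f(\emptyset)=0$.

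Each accepted swap satisfies $\vol(\mx_{\mathcal{T}})>\eta\,\vol(\mx_{\mathcal{S}})$. In terms of $f$, this means $f$ increases by more than $\log\eta$ per swap. Let the algorithm start from the Businger-Golub output $\mathcal{S}_k$. After $q$ swaps the current set $\mathcal{S}^{(q)}$ has $k$ elements, so $f(\mathcal{S}^{(q)})\le f(\mathcal{S}_k^*)$. Therefore
\[
q\log\eta< f(\mathcal{S}^{(q)})-f(\mathcal{S}_k)\le f(\mathcal{S}_k^*)-f(\mathcal{S}_k).
\]

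Next we bound $f(\mathcal{S}_k^*)-f(\mathcal{S}_k)$ using the greedy guarantee. By Theorem~\ref{t_2} applied to the scaled matrix (equivalently, Corollary~\ref{c_2}), this gap is at most $(1-\tfrac1k)^k f(\mathcal{S}_k^*)\le \tfrac1e\log\vol(\mx_k^*/\sigma_n(\mx))$. Dividing by $\log\eta>0$ gives the first inequality of the theorem.

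For the second inequality, singular value interlacing~\eqref{e_inter} gives $\sigma_j(\mx_k^*)\le\sigma_j(\mx)\le\sigma_1(\mx)$ for each $j$. Hence $\log\vol(\mx_k^*/\sigma_n(\mx))=\sum_{j=1}^k\log(\sigma_j(\mx_k^*)/\sigma_n(\mx))\le k\log\kappa_2(\mx)$. Dividing by $e\log\eta$ converts this to $\frac{k}{e}\log_\eta\kappa_2(\mx)$.

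The step needing the most care is the counting in the swap bound, where the inequality is strict. It yields $q<\cdots$, which is stronger than the $q\le\cdots$ required, so it causes no difficulty. We also must check that every intermediate set has positive volume, so that $f$ is finite along the way. This holds because the initial Businger-Golub set has full rank, since $\rank(\mx)=n$, and each accepted swap increases the volume.
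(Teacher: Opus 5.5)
Your proposal is correct and is essentially the paper's argument: bound the initial gap with the greedy guarantee of Corollary~\ref{c_2}, note that each accepted swap raises the log-volume by more than $\log\eta$ without exceeding the optimum, and divide by $\log\eta$. The paper writes this multiplicatively as $\eta^q\vol(\mx_k)\le\vol(\mx_k^*)$ and leaves the $\kappa_2$ step implicit, which your interlacing argument supplies; your only slip is that the strict inequality $q\log\eta<\cdots$ fails for $q=0$, where the claimed bound still holds trivially because the right-hand side is non-negative.
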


\begin{proof}
Let $\mx_k$ be the output
from Algorithm~\ref{alg_qrcp}. Corollary~\ref{c_2}
implies
\begin{align*}
\log\left(\vol(\mx_k^*)/\vol(\mx_k)\right)\leq
\log\left(\vol(\mx_k^*)^{1/e}/\sigma_n(\mx)^{k/e}\right)
\end{align*}
Since the logarithm is a monotone function,
\[ {\vol(\mx_k^*)^\gamma}{\sigma_n(\mx)^{k/e}} \le \vol(\mx_k) \le \vol(\mx_k^*), \qquad \gamma \equiv 1-1/e. 
\]
Each swap of Algorithm~\ref{alg_gue} improves the volume by a factor $\eta > 1$. After $q$ swaps, 
the improvement is $\eta^q$, hence
\[\eta^q \, {\vol(\mx_k^*)^\gamma}{\sigma_n(\mx)^{k/e}}
\le \eta^q\,\vol(\mx_k)\leq \vol(\mx_k^*). \]
Rearranging the lower and upper bounds gives
\[ \eta^q \le \vol(\mx_k^*)^{1/e} /\sigma_n(\mx)^{k/e} = \vol(\mx_k^*/\sigma_n(\mx))^{1/e}.   \] 
 Thus, the maximal number of swaps is 
 \begin{equation*}
 q \le \frac{1}{e}\frac{\log(\vol(\mx_k^*/\sigma_n(\mx))}{\log(\eta)} \le 
 \frac{k}{e}\log_\eta(\kappa_2(\mx)).
 \end{equation*}$\qquad$
\end{proof}

Theorem~\ref{t_55} implies that the number of swaps 
decreases as the relaxation factor increases.
 In contrast to 
the bound $q \le k \log_\eta(\sqrt{n})$ from~\cite[Section 4.4]{GuEis96} and~\cite[Section 5.2]{damle2025estimating},
the bound in Theorem~\ref{t_55} is independent of $n$ and tighter if $ \kappa_2(\mx)^{1/e} < \sqrt{n}$; that is, if the matrix is well-conditioned. Combining the bounds 
shows that the number of swaps cannot exceed
\begin{equation}
q \le k \min\{ \log_\eta(\sqrt{n}), \log_\eta(\kappa_2(\mx))/e\}.
\end{equation}

\subsection{Log-volume for general matrices}\label{s_GE_23}
As in Section~\ref{s_BG3}, we apply Algorithm~\ref{alg_gue}
to the perturbed matrix $\mz = \mx + \epsilon \mg$ where $\mg$ is a standard Gaussian random matrix and $\epsilon>0$. As a consequence, Corollary~\ref{c_3} applies to general square matrices 
(Theorem~\ref{thm:smoothed2}).

\begin{theorem}\label{thm:smoothed2}
Let $\mx \in \rnn$ and let $k$ be a fixed integer
with $0 < k \leq \rank(\mx)$. Let $\mx_k^*\in\real^{n\times k}$
be a column submatrix of $\mx$ that maximizes
$\log{\vol(\mb)}$ among all submatrices $\mb\in\real^{n\times k}$ of $\mx$. 

Define the perturbation $\mz\equiv \mx+\epsilon\mg$,
where $\epsilon>0$ and $\mg$ is a standard Gaussian random matrix.   Let $\mz_k^*\in\real^{n\times k}$
be a column submatrix of $\mz$ that maximizes
$\log{\vol(\mb)}$ among all submatrices $\mb\in\real^{n\times k}$ of $\mz$. Let $\mz_k$ be the output from Algorithm~\ref{alg_gue}, and $\mx_k$ the corresponding columns of $\mx_k$.

Let $0 < \delta < 1$ represent the failure probability and define
\[ \Delta_1 \equiv \frac{\delta\epsilon}{4.7\sqrt{n}}, \qquad \Delta_2 \equiv 2\sqrt{n}  + \sqrt{2\log(4/\delta)}.  \] 
If
$$0 < \epsilon < \min\left\{\frac{\sigma_{k}(\mx_k^*)}{2\Delta_2}, \frac{\sigma_k(\mx)}{\Delta_2(1 + 2\sqrt{1 + k(n-k)})} \right\},$$ then, with probability at least $1-\delta$, 
\[\begin{aligned} 0 \le \log\vol(\mx_k^*)- \log\vol(\mx_k) \le & \>  \frac12 \log\vol(\mx_k^*/\Delta_1)  + k(2\eta_2 + \eta_1),
\end{aligned}\]
where $\eta_1 \equiv\epsilon\Delta_2/\sigma_{k}(\mz_k) \le 1/2$ and $\eta_2 \equiv \epsilon\Delta_2/\sigma_{k}(\mx_k^*) \le 1/2$. 
\end{theorem}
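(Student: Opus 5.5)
The plan mirrors the argument behind Theorem~\ref{thm:smoothed}, with Corollary~\ref{c_3} taking the place of Corollary~\ref{c_2} and the Gu--Eisenstat singular-value bounds taking the place of the Businger--Golub ones. The proof splits the error into three pieces:
\begin{align*}
\log\vol(\mx_k^*)-\log\vol(\mx_k) = {}& \bigl[\log\vol(\mx_k^*)-\log\vol(\mz_{\mathcal{S}^*})\bigr] + \bigl[\log\vol(\mz_{\mathcal{S}^*})-\log\vol(\mz_k)\bigr]\\
&+ \bigl[\log\vol(\mz_k)-\log\vol(\mx_k)\bigr].
\end{align*}
Here $\mz_{\mathcal{S}^*}$ denotes the columns of $\mz$ indexed by the optimal set for $\mx$. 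The lower bound $0$ is immediate from optimality of $\mx_k^*$, once we know $\mx_k$ has rank $k$.

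\textbf{Probabilistic step.} We intersect two events, each of probability at least $1-\delta/2$.
\begin{enumerate}
\item The bound $\sigma_n(\mz)\geq \Delta_1$ follows from the smallest singular value bound of Sankar--Spielman--Teng \cite{sankar2006smoothed}, since $\mathbb{P}(\sigma_n(\mz)<t)\le 2.35\,t\sqrt n/\epsilon$.
\item The bound $\|\mg\|_2\le \Delta_2$ follows from Gaussian concentration, using $\mathbb{E}\|\mg\|_2\le 2\sqrt n$ plus the Lipschitz tail $\sqrt{2\log(4/\delta)}$.
\end{enumerate}
On this event $\mz$ is nonsingular. By Weyl~\eqref{eqn:weyl}, every column submatrix satisfies $|\sigma_j(\mz_{\mathcal{S}})-\sigma_j(\mx_{\mathcal{S}})|\le \epsilon\Delta_2$.

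\textbf{Middle term.} Apply Corollary~\ref{c_3} to $\mz$. Optimality of $\mz_k^*$ gives $\log\vol(\mz_{\mathcal{S}^*})\le\log\vol(\mz_k^*)$, so the middle term is at most $\frac{k-1}{2k-1}\log\vol(\mz_k^*/\sigma_n(\mz))$, and $\frac{k-1}{2k-1}<\frac12$. This quantity is nonnegative, which justifies using the coefficient $\tfrac12$. We then need $\log\vol(\mz_k^*/\Delta_1)$ bounded by $\log\vol(\mx_k^*/\Delta_1)$ plus a perturbation term.

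I would rather avoid that comparison. Instead I would route the middle term through the ratio to $\mx_k^*$, using $\sigma_j(\mz_k^*)\le\sigma_j(\mx_{\mathcal{T}})+\epsilon\Delta_2$ with $\mathcal{T}$ the index set of $\mz_k^*$. By optimality of $\mx_k^*$ in volume, $\vol(\mx_{\mathcal{T}})\le\vol(\mx_k^*)$. Bounding $\prod_j(1+\epsilon\Delta_2/\sigma_j)$ termwise via the index set's singular values is delicate, so the cleanest bookkeeping will be fixed only after matching the target constants $k(2\eta_2+\eta_1)$.

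\textbf{First and third terms.} For the first term, $\sigma_j(\mz_{\mathcal{S}^*})\ge\sigma_j(\mx_k^*)-\epsilon\Delta_2\geq \sigma_j(\mx_k^*)(1-\eta_2)$. Since $\eta_2\le 1/2$, we have $-\log(1-\eta_2)\le 2\eta_2$, so this term contributes at most $2k\eta_2$. For the third term, $\sigma_j(\mx_k)\ge\sigma_j(\mz_k)-\epsilon\Delta_2\ge\sigma_j(\mz_k)(1-\eta_1)$, which gives $\log(\vol(\mz_k)/\vol(\mx_k))\le -k\log(1-\eta_1)$. The stated term is $k\eta_1$, not $2k\eta_1$, so I expect the third term needs to be paired with the middle one. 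Concretely, use $\log\vol(\mz_k)$ rather than $\mz_{\mathcal{S}^*}$ on the reference side, and bound $\vol(\mz_k)\le\vol(\mx_k)\prod_j(1+\eta_1)$ using $\sigma_j(\mz_k)\le\sigma_j(\mx_k)+\epsilon\Delta_2$, together with $\log(1+\eta_1)\le\eta_1$.

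\textbf{Main obstacle.} The main obstacle is guaranteeing $\eta_1\le 1/2$, that is, $\sigma_k(\mz_k)\ge 2\epsilon\Delta_2$. Here the hypothesis $\epsilon<\sigma_k(\mx)/(\Delta_2(1+2\sqrt{1+k(n-k)}))$ enters. Gu--Eisenstat \cite[Theorem 3.2]{GuEis96} with $\eta$ gives $\sigma_k(\mz_k)\ge\sigma_k(\mz)/\sqrt{1+\eta^2k(n-k)}$, and with $\eta=1$ this is $\sigma_k(\mz)/\sqrt{1+k(n-k)}$. Weyl gives $\sigma_k(\mz)\ge\sigma_k(\mx)-\epsilon\Delta_2$. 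Combining these, the hypothesis yields $\sigma_k(\mz_k)>2\epsilon\Delta_2$, so $\eta_1\le1/2$ and $\mx_k$ has full rank. Similarly, $\eta_2\le1/2$ follows from the first bound on $\epsilon$.
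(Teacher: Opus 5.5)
Your skeleton is the paper's: Steps 1--5 of the proof of Theorem~\ref{thm:smoothed}, with Corollary~\ref{c_3} and \cite[Theorem 3.2]{GuEis96} substituted. The event $\{\sigma_n(\mz)\ge\Delta_1\}\cap\{\|\mg\|_2\le\Delta_2\}$, the application of Corollary~\ref{c_3} to $\mz$, the split through $\mz_{\mathcal{S}^*}$ (the paper's $\mz_k'$), and Gu--Eisenstat plus Weyl for $\eta_1\le\tfrac12$ and $\rank(\mx_k)=k$ are all fine. Your first-term bound $-k\log(1-\eta_2)\le 2k\eta_2$ is more careful than the paper's.

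But the proposal does not close. The missing step is an upper bound on $\log\vol(\mz_k^*)$ in terms of $\log\vol(\mx_k^*)$. It is indispensable, because Corollary~\ref{c_3} only controls $\log\vol(\mz_k^*/\sigma_n(\mz))$, and you leave it open. The paper supplies it in Step~3 by asserting, ``similarly'' from Lemma~\ref{lemma:logvolperturb}, that $\log\vol(\mz_k^*)\le\log\vol(\mx_k^*)+k\eta_2$. The factor $\tfrac12$ then gives $\tfrac12k\eta_2+k\eta_2+k\eta_1\le k(2\eta_2+\eta_1)$.

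Your hesitation is justified, though. For the index set $\mathcal{T}$ of $\mz_k^*$, the lemma would need $\epsilon\Delta_2\le\sigma_k(\mx_{\mathcal{T}})/2$, which nothing controls, and the inequality can fail:
take $k=2$ and $\mx\in\real^{3\times 3}$ with columns $\sqrt v\,\ve_1$, $\sqrt v\,\ve_2$, $\sqrt v(\ve_1+\ve_2)$. Every pair has volume $v$, so $\mx_k^*$ may be the first two columns, with $\sigma_2(\mx_k^*)=\sqrt v$. Meanwhile $\mx_{\{1,3\}}$ has singular values $\phi^{\pm1}\sqrt v$, with $\phi$ the golden ratio. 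A perturbation of norm $\epsilon\Delta_2$ that adds $\epsilon\Delta_2$ to both singular values gives $\vol(\mz_{\{1,3\}})=v(1+\sqrt5\,\eta_2+\eta_2^2)$, which exceeds $v\,e^{2\eta_2}$ for small $\eta_2$.

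A sound repair is to center the lemma at $\mz_k^*$, just as $\eta_1$ is centered at $\mz_k$. A global maximizer is also 1-swap optimal, so \cite[Theorem 3.2]{GuEis96} and the second hypothesis on $\epsilon$ give $\sigma_k(\mz_k^*)>2\epsilon\Delta_2$. Hence $\log\vol(\mz_k^*)\le\log\vol(\mx_k^*)-k\log\bigl(1-\epsilon\Delta_2/\sigma_k(\mz_k^*)\bigr)$, but this introduces a term that is not in the stated bound.

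Your repair of the third term is circular. The bound $\sigma_j(\mz_k)\le\sigma_j(\mx_k)+\epsilon\Delta_2\le(1+\eta_1)\sigma_j(\mx_k)$ requires $\sigma_k(\mx_k)\ge\sigma_k(\mz_k)$, which you do not have. Weyl only gives $\sigma_k(\mx_k)\ge(1-\eta_1)\sigma_k(\mz_k)$, which returns you to $-k\log(1-\eta_1)>k\eta_1$.

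The paper's $k\eta_1$ in Step~4 uses the same unjustified comparison. In addition, the lower half of Lemma~\ref{lemma:logvolperturb} rests on $\log(1-\tau)\ge-\tau$, which is false for $\tau>0$; the correct estimate is $k\log(1-\tau)\ge-2k\tau$ for $\tau\le\tfrac12$.

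So ``matching the target constants'' is not a viable plan. The honest version of your argument proves a bound of the same shape with larger constants, for example $\tfrac12\log\vol(\mx_k^*/\Delta_1)+k\bigl(2\eta_2+2\eta_1+\epsilon\Delta_2/\sigma_k(\mz_k^*)\bigr)$, rather than $k(2\eta_2+\eta_1)$.
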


\begin{proof}
See Appendix~\ref{app_c}.
\end{proof}

Theorem~\ref{thm:smoothed2} can be extended to rectangular matrices as shown in Remark~\ref{r_logvol_gen}.

%% file: sec6.tex
\section{Greedy and 1-interchange algorithms for submatrix selection}\label{s_6}
We analyze algorithms for selecting principal submatrices 
of maximal volume from a symmetric positive-definite matrix.

After reviewing existing work (Section~\ref{s_rel6}), we show that 
pivoted Cholesky (Algorithm~\ref{alg_pchol}) 
is a greedy algorithm 
in the sense of Algorithm~\ref{alg_proto}
for maximizing the log-volume of 
positive definite matrices (Section~\ref{ssec:greedychol})
and of positive semidefinite matrices (Section~\ref{ssec:logvolspsd}).
Then we show
 that strong rank-revealing Cholesky
(Algorithm~\ref{alg_srrchol})
 is a 1-interchange algorithm in the sense of Algorithm~\ref{alg_proto2} for maximizing the log-volume of positive semidefinite matrices  (Section~\ref{s_63}).

\subsection{Existing work}\label{s_rel6}
Finding a $k\times k$ submatrix with maximal volume
$\ma\in\rnn$ is NP-hard \cite[Theorem 6]{CM09}, \cite[Section 1]{Massei2022}.

For matrices $\ma\in\rnn$ with $\rank(\ma)\geq k$, a $k\times k$ submatrix with largest volume 
is attained by a principal submatrix if $\ma$ is diagonally dominant \cite[Theorem 4]{CKM2020}, or if
$\ma$ is symmetric positive semi-definite \cite[Theorem 1]{CKM2020}.
Low-rank approximations have been implemented by pivoted Cholesky factorizations~\cite{Harb2012}, and randomized pivoted Cholesky factorizations~\cite{chen2025randomly,epperly2025embrace}.

The pivoted Cholesky algorithm, in Algorithm~\ref{alg_pchol}, coincides  with adaptive cross approximation \cite[Algorithms 1 and 2]{Beb2000} when applied to symmetric positive semi-definite matrices \cite[Section 1]{Harb2012}.
ACA computes a low-rank approximation, but 
can be viewed as a greedy method for volume maximization, and amounts to Gaussian elimination process with rook
pivoting~\cite[Section 3]{Beb2000}, \cite[Section 1]{Massei2022}.
It seems to be the same
as the fast Greedy MAP inference \cite[Algorithm 1]{ChenZhang2018}. Other algorithms for volume maximization have been proposed in~\cite{goreinov2010find,damle2025estimating}. 

Log-volume maximization via pivoted Cholesky is implemented as fast Greedy MAP inference \cite[Section 3]{ChenZhang2018}.

\begin{algorithm}[!t]
\caption{Conceptual version of pivoted Cholesky \cite{GoVL13,Higham2002}}\label{alg_pchol}
\begin{algorithmic}[1]
\REQUIRE Symmetric positive semi-definite $\ma\in\rnn$, $0<k\leq\rank(\ma)$

\STATE $\ma_0\equiv \ma$, $\mathcal{S}_0=\emptyset$,
$\mathcal{T}_0\equiv \{1,\ldots, n\}$
\FOR{$j=1:k$}
\STATE \COMMENT{Select largest diagonal element}
\STATE Choose column $\va_t$ of $\ma_{j-1}$ with
$a_{tt}=\max_{i\in\mathcal{T}_{j-1}}{a_{ii}}$
\smallskip

\STATE $\ma_j=\ma_{j-1} -\va_t a_{tt}^{-1}\va_t^T\qquad$
\COMMENT{Downdate}
\smallskip

\STATE $\mathcal{S}_j=\mathcal{S}_{j-1}\cup\{t\}$,
$\mathcal{T}_j=\mathcal{T}_{j-1}-\{t\}\qquad$
\COMMENT{$\mathcal{S}_j\cup\mathcal{T}_j=\{1,\ldots,n\}$}
\ENDFOR
\bigskip

\RETURN $k\times k$ principal submatrix $\ma[\mathcal{S}_k]$
\end{algorithmic}
\end{algorithm}

\subsection{Greedy algorithmfor positive definite matrices}\label{ssec:greedychol}
After relating Algorithm~\ref{alg_pchol} to Algorithm~\ref{alg_qrcp} (Theorem~\ref{thm:pivcholqrcp}),
we show that Algorithm~\ref{alg_pchol} is a greedy algorithm for maximizing the log-volume
of symmetric positive definite matrices
(Theorem~\ref{thm:pivcholesky}).

Algorithm~\ref{alg_pchol} presents the conceptual
version of the pivoted Cholesky 
algorithm \cite[Section 4.2.7]{GoVL13}, \cite[Section 10.3]{Higham2002}, with a focus on selecting a $k\times k$ principal
submatrix rather than computing a Cholesky factorization.

Theorem~\ref{thm:pivcholqrcp} below shows that Algorithm~\ref{alg_qrcp} applied to a Cholesky factor $\mx$ of~$\ma$ implements Algorithm~\ref{alg_pchol}.
The Cholesky factor $\mx$ does not have to be a triangular matrix.

\begin{theorem}\label{thm:pivcholqrcp}
Let $\ma=\mx^T\mx$ where $\mx\in\rnn$, and let $k$
be a fixed integer with $0<k\leq \rank(\mx)$.
Let $\mx_{\mathcal{S}_k}\in\real^{n\times k}$ be the 
output matrix of Algorithm~\ref{alg_qrcp}. Then the output
matrix
of Algorithm~\ref{alg_pchol} satisfies
\begin{equation*}
\ma[\mathcal{S}_k]=\mx_{\mathcal{S}_k}^T\mx_{\mathcal{S}_k}.
\end{equation*}
\end{theorem}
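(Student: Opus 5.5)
The plan is to show by induction on $j$ that both algorithms pick the same index at every step, and that the downdated matrix $\ma_j$ in Algorithm~\ref{alg_pchol} is the Gram matrix of the projected columns in Algorithm~\ref{alg_qrcp}. Concretely, I would prove the invariant
\begin{equation*}
\ma_j=\mx^T(\mi-\mP_j)\mx,\qquad 0\le j\le k,
\end{equation*}
where $\mP_j$ is the projector accumulated by Algorithm~\ref{alg_qrcp}. Both algorithms must use the same tie-breaking rule among maximizers; otherwise the statement is understood up to ties.

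The base case $j=0$ is immediate, since $\mP_0=\vzero$ gives $\ma_0=\mx^T\mx=\ma$.

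For the inductive step, assume the invariant holds for $j-1$. The diagonal entries are $(\ma_{j-1})_{ii}=\vx_i^T(\mi-\mP_{j-1})\vx_i=\|(\mi-\mP_{j-1})\vx_i\|^2$, because $\mi-\mP_{j-1}$ is a symmetric idempotent. Hence maximizing $a_{ii}$ over $\mathcal{T}_{j-1}$ in line~4 of Algorithm~\ref{alg_pchol} is the same as maximizing the projected column norm in line~4 of Algorithm~\ref{alg_qrcp}, so both select the same $t$.

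For the downdate, set $\vy_j=(\mi-\mP_{j-1})\vx_t$. Then $\va_t=\mx^T\vy_j$ and $a_{tt}=\|\vy_j\|^2>0$; positivity holds since $j\le k\le\rank(\mx)$ and $\mP_{j-1}$ has rank $j-1$, so some column has a nonzero projection and the maximizer does too. Using $\vy_j\vy_j^{\dagger}=\vy_j\vy_j^T/\|\vy_j\|^2$,
\begin{equation*}
\ma_j=\mx^T(\mi-\mP_{j-1})\mx-\mx^T\vy_j\vy_j^T\mx/\|\vy_j\|^2=\mx^T(\mi-\mP_{j-1}-\vy_j\vy_j^{\dagger})\mx=\mx^T(\mi-\mP_j)\mx,
\end{equation*}
which closes the induction.

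With identical index sets $\mathcal{S}_k$ from both algorithms, the conclusion follows: the principal submatrix $\ma[\mathcal{S}_k]$ consists of the entries $\vx_p^T\vx_q$ for $p,q\in\mathcal{S}_k$, which is exactly $\mx_{\mathcal{S}_k}^T\mx_{\mathcal{S}_k}$.

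The only delicate step is identifying the downdate with the rank-one projector update; this requires $\vy_j\neq 0$, which the rank argument above supplies.
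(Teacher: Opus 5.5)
Your proposal is correct and follows essentially the same route as the paper: an induction establishing $\ma_j=\mx^T(\mi-\mP_j)\mx$, so that the diagonal of $\ma_{j-1}$ consists of the squared projected column norms, and the Cholesky downdate coincides with the rank-one projector update $\mP_j=\mP_{j-1}+\vy_j\vy_j^{\dagger}$, giving identical selections up to ties. Your coordinate-free presentation (base case $j=0$, no block partitioning of $\mx$) and your explicit check that $a_{tt}=\|\vy_j\|^2>0$ only streamline the paper's argument.
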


\begin{proof}
With an induction proof over the 
dimension of the principal submatrices, we show that Algorithm~\ref{alg_qrcp} selects the same indices,
apart from ties,
as Algorithm~\ref{alg_pchol}.

\paragraph{Induction basis}
For $j=1$, Algorithm~\ref{alg_qrcp} selects a column
$\mx_{\mathcal{S}_1}$ of $\mx$ 
with largest two-norm, so that among all columns of $\mx$,
\begin{equation*}
\|\mx_{\mathcal{S}_1}\|^2=
\det(\mx_{\mathcal{S}_1}^T\mx_{\mathcal{S}_1})=
\vol(\ma[\mathcal{S}_1])
\end{equation*}
is maximal. The corresponding column and diagonal
element of $\ma=\mx^T\mx$ 
are $\va_t\equiv \mx^T\mx_{\mathcal{S}_1}$,
and
$a_{tt}=\mx_{\mathcal{S}_1}^T\mx_{\mathcal{S}_1}$,
respectively.
Then
\begin{align*}
\mx^T(\mi-\mP_1)\mx=
\ma-\mx^T\mx_{\mathcal{S}_1}(\mx_{\mathcal{S}_1}^T\mx_{\mathcal{S}_1})^{-1}\mx_{\mathcal{S}_1}^T\mx
=\ma-\va_t a_{tt}^{-1} \va_t^T=\ma_1,
\end{align*}
where $\mP_1\equiv \mx_{\mathcal{S}_1}\mx_{\mathcal{S}_1}^{\dagger}$ is the orthogonal projector onto 
$\range(\mx_{\mathcal{S}_1})$.
With $\mx=\begin{bmatrix}\mx_{\mathcal{S}_1}&\mx_{\mathcal{T}_1}\end{bmatrix}$ this gives
\begin{equation*}
\ma_1=\begin{bmatrix} 0 & \vzero \\ \vzero &
\mx_{\mathcal{T}_1}^T(\mi-\mP_1)\mx_{\mathcal{T}_1}.
\end{bmatrix}
\end{equation*}

\paragraph{Induction hypothesis} Assume that for $1\leq j-1<n$, Algorithm~\ref{alg_qrcp} has selected the $j-1$ linearly 
independent columns $\mx_{\mathcal{S}_{j-1}}$, and
\begin{equation*}
\det(\mx_{\mathcal{S}_{j-1}}^T\mx_{\mathcal{S}_{j-1}})=
\vol(\ma[\mathcal{S}_{j-1}]).
\end{equation*}
Let
$\mP_{j-1}\equiv\mx_{\mathcal{S}_{j-1}}\mx_{\mathcal{S}_{j-1}}^{\dagger}=\mq_{j-1}\mq_{j-1}^T$
be the orthogonal projector onto $\range(\mx_{\mathcal{S}_{j-1}})$. Then
\begin{align*}
\mx^T(\mi-\mP_{j-1})\mx&=
\ma-\mx^T\mx_{\mathcal{S}_{j-1}}(\mx_{\mathcal{S}_{j-1}}^T\mx_{\mathcal{S}_{j-1}})^{-1}\mx_{\mathcal{S}_{j-1}}^T\mx\\
&=\begin{bmatrix} \vzero_{(j-1)\times (j-1)} & \vzero \\ \vzero &
\mx_{\mathcal{T}_{j-1}}^T(\mi-\mP_{j-1})\mx_{\mathcal{T}_{j-1}}\end{bmatrix}=\ma_{j-1}.
\end{align*}
\smallskip

\paragraph{Induction step}
Let $\mx_{\mathcal{S}_j}\equiv\begin{bmatrix} \mx_{\mathcal{S}_{j-1}} & \vx_t\end{bmatrix}$, 
where 
Algorithm~\ref{alg_qrcp} 
selects a column $\vx_t\in\mx_{\mathcal{T}_{j-1}}$ with maximal 
$\|(\mi-\mP_{j-1})\vx_t\|$.
With $\vy_j\equiv (\mi-\mP_{j-1})\vx_t$, 
the corresponding column $\va_t$ and diagonal element
$a_{tt}$ of $\ma_{j-1}$ are respectively
\begin{equation*}
\va_t=\begin{bmatrix}\vzero_{(j-1)\times 1}\\ \mx_{\mathcal{T}_{j-1}}^T(\mi-\mP_{j-1})\vy_j\end{bmatrix}\quad \text{and}
\quad a_{tt}=\vy_j^T\vy_j.
\end{equation*}
Update $\mx_{\mathcal{T}_{j-1}}\equiv\begin{bmatrix} \vx_t& \mx_{\mathcal{T}_{j}} \end{bmatrix}$
and abbreviate
\begin{equation*}
(\mi-\mP_{j-1})\mx_{\mathcal{T}_{j-1}}=
\begin{bmatrix} \vy_j & \my_j\end{bmatrix}\qquad\text{where}\quad
\my_j\equiv (\mi-\mP_{j-1})\mx_{\mathcal{T}_j}.
\end{equation*}
Then the downdate is
\begin{equation*}
\ma_j=\ma_{j-1}-\va_t a_{tt}^{-1}\va^T=
\begin{bmatrix} \vzero_{j\times j} & \vzero \\ \vzero & \mz_j\end{bmatrix}
\end{equation*}
where
\begin{align*}
\mz_j&\equiv \my_j^T\my_j-\my_j^T\vy_j(\vy_j^T\vy_j)^{-1}\vy_j^T\my_j=
\my_j^T\left(\mi-\vy_j\vy_j^{\dagger}\right)\my_j
=\mx_{\mathcal{T}_j}^T(\mi-\mP_j)\mx_{\mathcal{T}_j}
\end{align*}
and
$\mP_j\equiv \mP_{j-1}+\vy_j\vy_j^{\dagger}
=\mx_{\mathcal{S}_j}\mx_{\mathcal{S}_j}^{\dagger}$
is the orthogonal projector onto $\range(\mx_{\mathcal{S}_j})$.
Thus,
\begin{equation*}
\ma_j=\begin{bmatrix} \vzero_{j\times j} & \vzero \\ \vzero &
\mx_{\mathcal{T}_{j}}^T(\mi-\mP_{j})\mx_{\mathcal{T}_{j}}\end{bmatrix}.
\end{equation*}
The proof of Theorem~\ref{t_2} implies
\begin{align*}
\vol(\ma[\mathcal{S}_j])=\det(\mx_{\mathcal{S}_j}^T\mx_{\mathcal{S}_j})&
=\det(\mx_{\mathcal{S}_{j-1}}^T\mx_{\mathcal{S}_{j-1}})\> 
\|\vy_j\|^2=\vol[(\ma[\mathcal{S}_{j-1}])\|\vy_j\|^2.
\end{align*}
\end{proof}

Theorem~\ref{thm:pivcholesky} below
shows that Algorithm~\ref{alg_pchol} 
is a greedy algorithm for maximizing the log-volume
of symmetric positive definite matrices.

\begin{theorem}\label{thm:pivcholesky}
Let $\ma\in\rnn$ be symmetric positive definite,
and let $k$ be a fixed integer with $0<k<n$.
Let $\ma_k^*\in\real^{k\times k}$ be a principal submatrix of~$\ma$ that maximizes $\log\vol(\mb)$ among all principal submatrices 
$\mb\in\real^{k\times k}$ of $\ma$. 

Then Algorithm~\ref{alg_pchol} is a greedy algorithm in the
sense of Algorithm~\ref{alg_proto} for maximizing 
the log-volume and its output matrix
$\ma_k\equiv\ma[\mathcal{S}_k]$ satisfies
\begin{equation}\label{e_piva}
0 \le {\log\vol(\ma_k^*)-\log\vol(\ma_k)} \le \left(1-\frac1k\right)^k \log\vol(\ma_k^*/\lambda_n(\ma)),
    \end{equation}
    where $(1-\tfrac{1}{k})^k\leq \tfrac{1}{e} <.37$.

The upper bound in (\ref{e_piva}) is tight for matrices $\ma$ that are non-zero multiples of the identity matrix. 
\end{theorem}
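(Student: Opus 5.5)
We reduce Theorem~\ref{thm:pivcholesky} to Corollary~\ref{c_2} through a square root of $\ma$. Factor $\ma=\mx^T\mx$ with $\mx\in\rnn$, for example the Cholesky factor or $\mx=\ma^{1/2}$. Since $\ma$ is positive definite, $\rank(\mx)=n$ and $\sigma_i(\mx)^2=\lambda_i(\ma)$, so $\sigma_n(\mx)=\lambda_n(\ma)^{1/2}$. For any index set $\mathcal{S}$ with $|\mathcal{S}|=k$, the principal submatrix is $\ma[\mathcal{S}]=\mx_{\mathcal{S}}^T\mx_{\mathcal{S}}$. Definition~\ref{d_vol} (nonsingular square case) then gives
\begin{equation*}
\vol(\ma[\mathcal{S}])=\det(\mx_{\mathcal{S}}^T\mx_{\mathcal{S}})=\vol(\mx_{\mathcal{S}})^2,
\qquad\text{so}\qquad
\log\vol(\ma[\mathcal{S}])=2\log\vol(\mx_{\mathcal{S}}).
\end{equation*}
Two consequences follow. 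First, the log-volume on principal submatrices of $\ma$ is twice the column log-volume of $\mx$, so by Theorem~\ref{t_logvol} it is set submodular. Second, maximizing over principal submatrices is the same problem as maximizing over column submatrices. Hence $\ma_k^*=(\mx_k^*)^T\mx_k^*$ for a maximal-volume column submatrix $\mx_k^*$ of $\mx$.

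Next, Theorem~\ref{thm:pivcholqrcp} shows that Algorithm~\ref{alg_pchol} selects the same index set $\mathcal{S}_k$ as Algorithm~\ref{alg_qrcp} applied to $\mx$, up to ties. Its proof also gives $\vol(\ma[\mathcal{S}_j])=\vol(\ma[\mathcal{S}_{j-1}])\,\|\vy_j\|^2$, and pivoted Cholesky maximizes $a_{tt}=\|\vy_j\|^2$. So each step of Algorithm~\ref{alg_pchol} maximizes $\log\vol(\ma[\mathcal{S}_{j-1}\cup\{t\}])$, and the algorithm is greedy in the sense of Algorithm~\ref{alg_proto}. This holds once we scale so that the function is non-negative and non-decreasing with value $0$ at $\emptyset$.

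For the error bound, apply Corollary~\ref{c_2} to $\mx$:
\begin{equation*}
0\le \log\vol(\mx_k^*)-\log\vol(\mx_{\mathcal{S}_k})\le (1-\tfrac1k)^k\log\vol(\mx_k^*/\sigma_n(\mx)).
\end{equation*}
Multiply by $2$. On the left, $2\log\vol(\mx_{\mathcal{S}})=\log\vol(\ma[\mathcal{S}])$ turns the difference into $\log\vol(\ma_k^*)-\log\vol(\ma_k)$. On the right, homogeneity gives
\begin{equation*}
2\log\vol(\mx_k^*/\sigma_n(\mx))=2\log\vol(\mx_k^*)-2k\log\sigma_n(\mx)=\log\vol(\ma_k^*)-k\log\lambda_n(\ma)=\log\vol(\ma_k^*/\lambda_n(\ma)).
\end{equation*}
The last equality holds because a $k\times k$ matrix scaled by $1/\lambda_n(\ma)$ has its determinant scaled by $\lambda_n(\ma)^{-k}$. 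This yields (\ref{e_piva}).

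For tightness, let $\ma=c\,\mi_n$ with $c>0$. Every principal $k\times k$ submatrix is $c\,\mi_k$ and $\lambda_n(\ma)=c$, so both sides of (\ref{e_piva}) are zero; as in Remark~\ref{r_2}, the bound holds with equality. The main obstacle is bookkeeping rather than any new inequality. Two points need care: the volume of a square principal submatrix is a determinant, not its square root, which produces the factor $2$ and the switch from $\sigma_n$ to $\lambda_n$; and the greedy identification via Theorem~\ref{thm:pivcholqrcp} must respect ties.
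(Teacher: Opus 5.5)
Your proposal is correct and takes the same route as the paper, whose proof simply cites Theorem~\ref{thm:pivcholqrcp} and Corollary~\ref{c_2}. You also write out the step the paper leaves implicit: the identities $\log\vol(\ma[\mathcal{S}])=2\log\vol(\mx_{\mathcal{S}})$ and $\sigma_n(\mx)^2=\lambda_n(\ma)$ are exactly what turn the column-selection bound for $\mx$ into (\ref{e_piva}).
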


\begin{proof}
This follows from Theorem~\ref{thm:pivcholqrcp} and Corollary~\ref{c_2}.
\end{proof}

Symmetric positive definite matrices $\ma$ arise  in applications, such as  sensor placement for Gaussian processes,
where $\ma = \mi + \sigma_n^{-2}\mk$, with $\mk$ being a positive semidefinite kernel matrix and $\sigma_n$ the noise standard deviation~\cite{chen2026optimal,mirzasoleiman2015lazier,Mirza2013}. In such applications,  Theorem~\ref{thm:pivcholesky} applies with $\lambda_n(\ma) \ge 1$.

\subsection{Greedy algorithm for positive semidefinite matrices}\label{ssec:logvolspsd}
As in Section~\ref{s_BG3}, we extend the error bounds for
the greedy algorithm to positive semidefinite matrices (Theorem~\ref{thm:smoothed_psd})

To this end, apply
Algorithm~\ref{alg_pchol} to the perturbed matrix $\mh \equiv \ma + \epsilon\mg\mg^T$, where $\epsilon>0$ and
$\mg \in \real^{n\times n}$ is a standard Gaussian matrix, since
$\mh$ is positive definite with high probability. Although Algorithm~\ref{alg_pchol} outputs the principal submatrix $\mh_k$,
the following theorem derives bounds on the log-volume of the corresponding submatrix~$\ma_k$ of $\ma$.

\begin{theorem}\label{thm:smoothed_psd}
Let $\ma\in\rnn$ be symmetric positive semidefinite,
and let $k$ be a fixed integer with $0<k\le \rank(\ma)$.
Let $\ma_k^*\in\real^{k\times k}$ be a principal submatrix of~$\ma$ that maximizes $\log\vol(\mb)$ among all principal submatrices 
$\mb\in\real^{k\times k}$ of $\ma$. 

Define the perturbation $\mh \equiv \ma + \epsilon \mg\mg^T$, where $\mg \in \real^{n\times n}$ is a standard Gaussian random matrix and $\epsilon>0$. Let $\mh_k = \mh[\mathcal{S}_k]$ be 
a submatrix of $\mh$ selected by Algorithm~\ref{alg_pchol} and let $\ma_k\equiv \ma[\mathcal{S}_k]$ be the corresponding submatrix of $\ma$.

Let $0 < \delta < 1$ represent the failure probability. Define 
\[ \Delta_1 \equiv \epsilon\left(\frac{\delta}{2n}\right)^2 , \qquad \Delta_2 = (2\sqrt{n} + \sqrt{2\log(4/\delta)})^2. \]
If
$$0 < \epsilon < \min\left\{\frac{\lambda_{k}(\ma_k^*)}{2\Delta_2}, \frac{\lambda_k(\ma)}{\Delta_2(1 + 2^{k+2}{n-k})} \right\},$$ 
then, with probability at least $1-\delta$, 
\[\begin{aligned} 0 \le \log\vol(\ma_k^*)- \log\vol(\ma_k) \le & \>  \frac1e \log\vol(\ma_k^*/\Delta_1)  + k(2\eta_2 + \eta_1),
\end{aligned}\]
where $\eta_1 \equiv \epsilon\Delta_2/\lambda_{k}(\mh_k) \le 1/2$ and $\eta_2 \equiv \epsilon\Delta_2/\lambda_{k}(\ma_k^*) \le 1/2$. 
\end{theorem}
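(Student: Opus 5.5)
We reduce to the column setting by factoring the perturbed matrix, and then follow the same three-term decomposition used for Theorem~\ref{thm:smoothed}. Write $\ma=\mx^T\mx$ with $\mx\in\rnn$, for instance the symmetric square root. Then $\mh=\ma+\epsilon\mg\mg^T=\mz^T\mz$ with $\mz\equiv\begin{bmatrix}\mx\\ \sqrt{\epsilon}\mg^T\end{bmatrix}$.

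Principal submatrices satisfy $\mh[\mathcal{S}]=\ma[\mathcal{S}]+\epsilon\,\mg_{\mathcal{S}}^T\mg_{\mathcal{S}}$, where $\mg_{\mathcal{S}}^T$ is the row submatrix of $\mg^T$ indexed by $\mathcal{S}$. Weyl's monotonicity theorem~\eqref{eqn:weyl2} therefore gives
\[
\lambda_i(\ma[\mathcal{S}])\le\lambda_i(\mh[\mathcal{S}])\le\lambda_i(\ma[\mathcal{S}])+\epsilon\|\mg\|_2^2 .
\]
Standard Gaussian concentration shows that $\|\mg\|_2\le 2\sqrt n+\sqrt{2\log(4/\delta)}$, so $\epsilon\|\mg\|_2^2\le\epsilon\Delta_2$, with probability at least $1-\delta/2$. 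We also need a lower bound on $\lambda_n(\mh)$. Since $\lambda_n(\mh)\ge\epsilon\,\sigma_n(\mg)^2$, the Sankar--Spielman--Teng bound (or Edelman's bound $\Pr[\sigma_n(\mg)\le t/\sqrt n]\le t$) gives $\sigma_n(\mg)\ge \delta/(2n)$ with probability at least $1-\delta/2$, hence $\lambda_n(\mh)\ge\Delta_1$. A union bound puts both events in force with probability at least $1-\delta$. On this event $\mh$ is positive definite, so Theorem~\ref{thm:pivcholesky} applies to $\mh$:
\[
\log\vol(\mh_k^*)-\log\vol(\mh_k)\le \tfrac1e\log\vol(\mh_k^*/\lambda_n(\mh))\le \tfrac1e\log\vol(\mh_k^*/\Delta_1).
\]

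Next we split the error as
\[
\bigl[\log\vol(\ma_k^*)-\log\vol(\mh_k^*)\bigr]+\bigl[\log\vol(\mh_k^*)-\log\vol(\mh_k)\bigr]+\bigl[\log\vol(\mh_k)-\log\vol(\ma_k)\bigr].
\]
\begin{itemize}
\item \emph{First term.} Since $\mh[\mathcal{S}^*]\succeq\ma_k^*$ and $\mh_k^*$ maximizes over principal submatrices of $\mh$, we get $\vol(\mh_k^*)\ge\vol(\mh[\mathcal{S}^*])\ge\vol(\ma_k^*)$, so this term is nonpositive.
\item \emph{Third term.} Eigenvalue by eigenvalue, $\log\lambda_i(\mh_k)-\log\lambda_i(\ma_k)\le-\log(1-\epsilon\Delta_2/\lambda_i(\mh_k))\le 2\eta_1$, using $-\log(1-x)\le 2x$ for $x\le1/2$. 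This contributes at most $2k\eta_1$; the claimed coefficient on $\eta_1$ is $1$, so we expect to refine this with $-\log(1-x)\le x/(1-x)$ and careful bookkeeping.
\item \emph{Middle term.} We must replace $\vol(\mh_k^*)$ by $\vol(\ma_k^*)$ inside the $\tfrac1e\log(\cdot/\Delta_1)$ term. Let $\mathcal{S}'$ be the index set of $\mh_k^*$. Then
\[
\log\vol(\mh_k^*)\le\log\vol(\ma[\mathcal{S}'])+\sum_i\log\bigl(1+\epsilon\Delta_2/\lambda_i(\ma[\mathcal{S}'])\bigr)\le\log\vol(\ma_k^*)+(\text{correction}).
\]
This correction must be bounded by the $\eta_2$ terms. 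That requires $\lambda_k(\ma[\mathcal{S}'])$ to be comparable to $\lambda_k(\ma_k^*)$, which is where the hypothesis $\epsilon<\lambda_k(\ma_k^*)/(2\Delta_2)$ enters.
\end{itemize}

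The main obstacle is ensuring that the selected $\ma_k=\ma[\mathcal{S}_k]$ is nonsingular with $\lambda_k(\ma_k)$ not much smaller than $\lambda_k(\mh_k)$, i.e.\ $\eta_1\le1/2$. For this we use the pivoted Cholesky / QRCP singular value bound from \cite[Theorem 7.2]{GuEis96} via Theorem~\ref{thm:pivcholqrcp}. Squaring the column bound $\sigma_k(\mz_k)\ge\sigma_k(\mz)/(2^k\sqrt{n-k})$ gives $\lambda_k(\mh_k)\ge\lambda_k(\mh)/(2^{2k}(n-k))$, and $\lambda_k(\mh)\ge\lambda_k(\ma)$. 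The second condition on $\epsilon$ then yields $\epsilon\Delta_2\le\lambda_k(\mh_k)/2$, so $\eta_1\le 1/2$ and $\lambda_k(\ma_k)\ge\lambda_k(\mh_k)-\epsilon\Delta_2>0$. I expect the exact constant $2^{k+2}(n-k)$ to require care here, possibly with a slightly different interlacing argument. Finally, the tolerance for the $\eta_2$ correction may force us to bound the middle term by comparing $\ma[\mathcal{S}']$ directly with $\ma_k^*$ rather than with $\mh_k^*$.
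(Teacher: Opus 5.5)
Your outline follows the paper's proof: the same events $\{\lambda_n(\mh)\ge\Delta_1\}$ and $\{\|\mg\mg^T\|_2\le\Delta_2\}$, Theorem~\ref{thm:pivcholesky} applied to $\mh$, the three-way split, and \cite[Theorem 7.2]{GuEis96} to control $\eta_1$. Your first term is sharper than the paper's, since $\mh[\mathcal{S}^*]\succeq\ma_k^*$ makes it nonpositive.

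\textbf{The gap is the middle term.} You never prove $\log\vol(\mh_k^*)\le\log\vol(\ma_k^*)+(\text{small})$, and the hypothesis you point to cannot supply it. The hypothesis $\epsilon<\lambda_k(\ma_k^*)/(2\Delta_2)$ concerns only $\mathcal{S}^*$. The set $\mathcal{S}'$, however, is chosen by maximizing volume over $\mh$, and nothing in your argument relates $\lambda_k(\ma[\mathcal{S}'])$ to $\lambda_k(\ma_k^*)$. So the correction $\sum_i\log\bigl(1+\epsilon\Delta_2/\lambda_i(\ma[\mathcal{S}'])\bigr)$ is uncontrolled. The paper handles this step with ``similarly'' in Step~3 of the proof of Theorem~\ref{thm:smoothed}. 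The lower-bound argument there does not transfer, though: Lemma~\ref{lemma:logvolperturb} needs a lower bound on the smallest eigenvalue at $\mathcal{S}'$, not at $\mathcal{S}^*$.

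\textbf{What is missing is the maximum-volume property of $\mathcal{S}'$ itself.} Write $\mh=\mz^T\mz$. The columns $\mz_{\mathcal{S}'}$ have maximal volume, so no single exchange increases it, and the Gu--Eisenstat exchange condition holds with $f=1$. Then \cite[Theorem 3.2]{GuEis96} gives $\lambda_k(\mh_k^*)\ge\lambda_k(\mh)/(1+k(n-k))\ge\lambda_k(\ma)/(1+k(n-k))$. The \emph{second} hypothesis on $\epsilon$ now yields $\eta_3\equiv\epsilon\Delta_2/\lambda_k(\mh_k^*)\le1/2$. Weyl's inequality gives $\lambda_i(\ma[\mathcal{S}'])\ge\lambda_i(\mh_k^*)(1-\eta_3)$. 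Together with optimality of $\ma_k^*$, this gives $\log\vol(\mh_k^*)\le\log\vol(\ma[\mathcal{S}'])+2k\eta_3\le\log\vol(\ma_k^*)+2k\eta_3$. This closes the argument, but the correction is in terms of $\lambda_k(\mh_k^*)$, not $\eta_2$.

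\textbf{Two constants you flagged need changing, not just care.}

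First, squaring \cite[Theorem 7.2]{GuEis96} gives $\lambda_k(\mh_k)\ge\lambda_k(\ma)/(4^k(n-k))$. So $\eta_1\le1/2$ requires $\epsilon\Delta_2\le\lambda_k(\ma)/(2^{2k+1}(n-k))$. The printed $1+2^{k+2}(n-k)$ does not imply this for $k\ge2$, so your claim that the second condition yields $\epsilon\Delta_2\le\lambda_k(\mh_k)/2$ is false as stated. The paper's own route (Step~4 of Theorem~\ref{thm:smoothed}, squared) produces the same $4^k(n-k)$, so the printed constant appears to be a slip. State the hypothesis with $2^{2k+1}(n-k)$ instead.

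Second, do not try to improve $2k\eta_1$ to $k\eta_1$. Since $-\log(1-x)>x$ on $(0,1)$, and $x/(1-x)\le 2x$ gains nothing, the eigenvalue-wise estimate cannot give coefficient $1$. The paper's coefficient $1$ comes from Lemma~\ref{lemma:logvolperturb}, whose lower bound would need $\log(1-\tau)\ge-\tau$. That fails: take $\mc=\mi$, $\me=-\tau\mi$. The correct factor on $[0,1/2]$ is $2\log 2$.

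With these repairs your argument proves
\[ 0\le\log\vol(\ma_k^*)-\log\vol(\ma_k)\le\tfrac1e\log\vol(\ma_k^*/\Delta_1)+\tfrac{2k}{e}\,\eta_3+2k\,\eta_1, \]
which has the shape of the statement but not its exact $\eta$-terms.
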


\begin{proof}
    See Appendix~\ref{app_b}.
\end{proof}

\subsection{Interchange algorithm for positive semidefinite matrices}\label{s_63}
After relating Algorithm~\ref{alg_srrchol} to Algorithm~\ref{alg_gue} (Theorem~\ref{t_100}),
we show that Algorithm~\ref{alg_srrchol} is a 1-interchange algorithm for maximizing the log-volume
of a symmetric positive definite matrix in two ways: in the sense of Algorithm~\ref{alg_proto2}
if the relaxation factor $\eta=1$
(Corollary~\ref{c_100});
and in the sense of Theorem~\ref{t_53}
if the relaxation factor $\eta>1$
(Theorem~\ref{thm:srrcholesky}). Then we extend the latter
to positive semidefinite matrices
(Theorem~\ref{thm:smoothed_psd2}).

A conceptual version of \cite[Algorithm 1]{gu2004strong} is presented in Algorithm~\ref{alg_srrchol}, with a
focus on selecting a $k\times k$ principal submatrix rather than
computing a Cholesky factorization.

Theorem~\ref{t_100} below shows that Algorithm~\ref{alg_gue} applied to a Cholesky factor $\mx$ of~$\ma$ implements Algorithm~\ref{alg_srrchol}.
The Cholesky factor $\mx$ does not have to be a triangular matrix.

\begin{theorem}\label{t_100}
Let $\ma=\mx^T\mx$ where $\mx\in\rnn$, and let $k$
be a fixed integer with $0<k\leq \rank(\mx)$.
Let $\mx_{\mathcal{S}_k}\in\real^{n\times k}$ be the 
output matrix of Algorithm~\ref{alg_gue}. Then the output
matrix
of Algorithm~\ref{alg_srrchol} satisfies
\begin{equation*}
\ma[\mathcal{S}_k]=\mx_{\mathcal{S}_k}^T\mx_{\mathcal{S}_k}.
\end{equation*}
\end{theorem}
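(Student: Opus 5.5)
The plan is to show that the two algorithms make identical decisions at every step. Run Algorithm~\ref{alg_srrchol} on $\ma$ and Algorithm~\ref{alg_gue} on $\mx$ from the same initial index set and with the same relaxation factor $\eta$. Since Algorithm~\ref{alg_srrchol} is not displayed above, I take it to be the conceptual version of \cite[Algorithm 1]{gu2004strong}: start from a nonsingular $k\times k$ principal submatrix, propose swaps $\mathcal{T}=(\mathcal{S}-\{i\})\cup\{j\}$, and accept a swap when $\vol(\ma[\mathcal{T}])$ exceeds $\eta^2\vol(\ma[\mathcal{S}])$ (or $\eta$, depending on the normalization in Algorithm~\ref{alg_srrchol}). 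The central identity I need is that for every index set $\mathcal{S}$ with $|\mathcal{S}|=k$,
\begin{equation*}
\ma[\mathcal{S}]=\mx_{\mathcal{S}}^T\mx_{\mathcal{S}},\qquad
\vol(\ma[\mathcal{S}])=\det(\mx_{\mathcal{S}}^T\mx_{\mathcal{S}})=\vol(\mx_{\mathcal{S}})^2 .
\end{equation*}
The first equation holds entrywise, because $a_{ij}=\vx_i^T\vx_j$. The second follows from Definition~\ref{d_vol}, since $\ma[\mathcal{S}]$ is symmetric positive semidefinite and its determinant is nonnegative. The same computation appears in the induction basis of Theorem~\ref{thm:pivcholqrcp}.

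First I check the initialization. A principal submatrix $\ma[\mathcal{S}]$ is nonsingular exactly when $\rank(\mx_{\mathcal{S}})=k$, so the initial sets admissible for line~2 of Algorithm~\ref{alg_gue} and for Algorithm~\ref{alg_srrchol} coincide, and I can take the same $\mathcal{S}$ for both. Next I induct on the number of swaps. Suppose both algorithms currently hold the same set $\mathcal{S}$ and consider a candidate swap $(i,j)$. Because $t\mapsto t^2$ is strictly increasing on $[0,\infty)$, the acceptance test $\vol(\mx_{\mathcal{T}})>\eta\vol(\mx_{\mathcal{S}})$ holds if and only if $\vol(\ma[\mathcal{T}])>\eta^2\vol(\ma[\mathcal{S}])$. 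Hence both algorithms accept or reject the same candidate, provided they examine candidates in the same order, with ties broken identically. The termination criterion "no more increase" is also equivalent under squaring. It follows that both algorithms terminate with the same $\mathcal{S}_k$, and $\ma[\mathcal{S}_k]=\mx_{\mathcal{S}_k}^T\mx_{\mathcal{S}_k}$ holds by the entrywise identity above.

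I expect the main obstacle to be matching the relaxation factor. If \cite{gu2004strong} states its test directly on determinants of $\ma$ with factor $\eta$, rather than $\eta^2$, then Algorithm~\ref{alg_gue} must be run with factor $\sqrt{\eta}$, and the statement should be read that way. A second subtlety is that the practical form of \cite{gu2004strong} tests swaps through the entries of $\ma_{11}^{-1}$ and of the Schur complement rather than through volumes. If Algorithm~\ref{alg_srrchol} is written in that form, I would add one step showing the two tests agree. By the block determinant (Schur) formula used in Lemma~\ref{l_det}, the volume ratio after swapping $i\in\mathcal{S}$ with $j\notin\mathcal{S}$ equals $(\ma[\mathcal{S}]^{-1})_{ii}\,s_{jj}+(\ma[\mathcal{S}]^{-1}\ma_{\mathcal{S},j})_i^2$, where $s_{jj}$ is the diagonal entry of the Schur complement. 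In terms of $\mx$ this expression is $\|(\mx_{\mathcal{S}}^{\dagger})^T\ve_i\|^2\,\|(\mi-\mP_{\mathcal{S}})\vx_j\|^2+((\mx_{\mathcal{S}}^{\dagger}\vx_j)_i)^2$, which is exactly the Gu--Eisenstat swap quantity $\vol(\mx_{\mathcal{T}})^2/\vol(\mx_{\mathcal{S}})^2$.
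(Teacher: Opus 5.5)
Your proposal is correct and follows essentially the paper's route. The paper's proof only says it is analogous to that of Theorem~\ref{thm:pivcholqrcp}: one checks that both algorithms make the same selections because $\ma[\mathcal{S}]=\mx_{\mathcal{S}}^T\mx_{\mathcal{S}}$ and $\vol(\ma[\mathcal{S}])=\vol(\mx_{\mathcal{S}})^2$, which is even simpler here than in the pivoted Cholesky case since both algorithms compare volumes directly and no downdating is needed.

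One precision on the relaxation factor: Algorithm~\ref{alg_srrchol} as displayed tests $\vol(\ma[\mathcal{T}])>\sqrt{\eta}\,\vol(\ma[\mathcal{S}])$. By your squaring argument the matching Gu--Eisenstat factor is therefore $\eta^{1/4}$, which is also what the paper's later use in Theorem~\ref{thm:srrcholesky} implicitly requires; with a common $\eta$ the two tests coincide only when $\eta=1$.
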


\begin{proof}
This is analogous to the proof of Theorem~\ref{thm:pivcholqrcp}.
\end{proof}

\begin{algorithm}[!t]
\caption{Conceptual version of strong rank-revealing Cholesky \cite{gu2004strong}} \label{alg_srrchol}
 \begin{algorithmic}[1]
\REQUIRE Symmetric positive semi-definite matrix $\ma\in\rnn$, $0<k\leq\rank(\ma)$, \\
$\qquad$ Relaxation factor $\eta \geq 1$
\medskip

\STATE \COMMENT{Initialization}
\STATE Find nonsingular principal submatrix
$\ma[\mathcal{S}]\in\real^{k\times k}$ of $\ma$
\smallskip

\REPEAT
\STATE \COMMENT{Swap column $i\in\mathcal{S}$
with column $j\in\{1,\ldots,n\}-\mathcal{S}$}
\STATE $\mathcal{T}=\mathcal{S}-\{i\}\cup\{j\}$
\medskip

\STATE \COMMENT{Did swap increase the volume?}
\smallskip

\IF{$\vol(\ma[\mathcal{T}])> \sqrt{\eta} \,\vol(\ma[\mathcal{S}])$}
\STATE $\mathcal{S}=\mathcal{T}\qquad$ \COMMENT{Keep the changes}
\ENDIF
\UNTIL{no more increase in $\vol(\ma[\mathcal{S}])$}
\bigskip

\RETURN $k\times k$ principal submatrix $\ma[\mathcal{S}]$
\end{algorithmic}
\end{algorithm}

Corollary~\ref{c_100} below shows that Algorithm~\ref{alg_srrchol} with relaxation factor $\eta=1$ is a 1-interchange algorithm for maximizing the log-volume.

\begin{corollary}\label{c_100}
Let  $\ma\in\rnn$ be symmetric positive definite, and
let  $k$ be a fixed integer with  $0<k<n$.
Let $\ma_k^*\in\real^{m\times k}$ be a 
principal submatrix of~$\ma$ that maximizes
$\log{\vol(\mb)}$ among all principal submatrices $\mb\in\real^{k\times k}$ of $\ma$. 

Then Algorithm~\ref{alg_srrchol} with relaxation
factor $\eta=1$ is a 1-interchange algorithm
in the sense of Algorithm~\ref{alg_proto2} for maximizing
the log-volume, and its output
matrix $\ma_k\equiv\ma[\mathcal{S}]$
satisfies
\begin{equation*}
0\leq \log{\vol(\ma_k^*)}-\log{\vol(\ma_k)}\leq 
\frac{k-1}{2k-1}\ \log{\vol(\ma_k^*/\lambda_n(\ma))},
\end{equation*}
where $\frac{k-1}{2k-1}<\frac{1}{2}$.
\end{corollary}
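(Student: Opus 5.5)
The plan is to reduce Corollary~\ref{c_100} to Corollary~\ref{c_3} through Theorem~\ref{t_100}, exactly as Theorem~\ref{thm:pivcholesky} was reduced to Corollary~\ref{c_2} through Theorem~\ref{thm:pivcholqrcp}.

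\emph{Step 1: factor and relate volumes.} Since $\ma$ is symmetric positive definite, write $\ma=\mx^T\mx$ with $\mx\in\rnn$ nonsingular, for instance $\mx=\ma^{1/2}$ or a Cholesky factor. Then $\rank(\mx)=n$ and $\sigma_i(\mx)^2=\lambda_i(\ma)$; in particular $\sigma_n(\mx)=\lambda_n(\ma)^{1/2}$. For every index set $\mathcal{S}$ with $|\mathcal{S}|=k$ we have $\ma[\mathcal{S}]=\mx_{\mathcal{S}}^T\mx_{\mathcal{S}}$. Hence
\[
\vol(\ma[\mathcal{S}])=\det(\mx_{\mathcal{S}}^T\mx_{\mathcal{S}})=\vol(\mx_{\mathcal{S}})^2,
\qquad
\log\vol(\ma[\mathcal{S}])=2\log\vol(\mx_{\mathcal{S}}).
\]
It follows that the optimal principal submatrix $\ma_k^*$ corresponds to an optimal column submatrix $\mx_k^*$, with $\ma_k^*=(\mx_k^*)^T\mx_k^*$.

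\emph{Step 2: identify the algorithms.} The test in Algorithm~\ref{alg_srrchol}, namely $\vol(\ma[\mathcal{T}])>\sqrt{\eta}\,\vol(\ma[\mathcal{S}])$, translates under Step 1 into $\vol(\mx_{\mathcal{T}})>\eta^{1/4}\vol(\mx_{\mathcal{S}})$. For $\eta=1$ this is exactly the test of Algorithm~\ref{alg_gue} with relaxation factor~$1$. Theorem~\ref{t_100} therefore shows that both algorithms produce the same index set $\mathcal{S}$, up to ties. By Theorem~\ref{t_logvol} and the definition of the swaps, Algorithm~\ref{alg_srrchol} is then a 1-interchange algorithm in the sense of Algorithm~\ref{alg_proto2} for the set function $\mathcal{S}\mapsto\log\vol(\ma[\mathcal{S}])$. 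This function is twice a submodular function, and after scaling by $\lambda_n(\ma)$ it is non-negative and non-decreasing by Lemma~\ref{l_1}.

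\emph{Step 3: transfer the bound.} Corollary~\ref{c_3} gives
\[
0\le\log\vol(\mx_k^*)-\log\vol(\mx_{\mathcal{S}})\le\tfrac{k-1}{2k-1}\log\vol\bigl(\mx_k^*/\sigma_n(\mx)\bigr).
\]
Multiply this by $2$ and use $2\log\vol(\mx_k^*/\sigma_n(\mx))=\log\det\bigl((\mx_k^*)^T\mx_k^*/\sigma_n(\mx)^2\bigr)=\log\vol(\ma_k^*/\lambda_n(\ma))$. The claimed inequality follows.

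\emph{Main obstacle.} The step needing the most care is the bookkeeping between the two volume conventions. For the square matrix $\ma[\mathcal{S}]$, Definition~\ref{d_vol} gives the determinant, which is the square of the column volume, and scalar scaling enters with exponent $k$. One has to check that the factor of $2$ appears consistently on both sides, so the constant $\frac{k-1}{2k-1}$ stays unchanged, and that dividing $\ma_k^*$ by $\lambda_n(\ma)$ matches dividing $\mx_k^*$ by $\sigma_n(\mx)$.

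The tie-breaking and initialization also need a check. A nonsingular principal submatrix $\ma[\mathcal{S}]$ is the same as $\mx_{\mathcal{S}}$ having full column rank, so the initializations of the two algorithms agree. Alternatively, one can avoid Theorem~\ref{t_100} and apply Theorem~\ref{t_ng2} directly to $f(\mathcal{S})=\log\vol(\ma[\mathcal{S}]/\lambda_n(\ma))$, which is $2$ times a non-negative non-decreasing submodular function.
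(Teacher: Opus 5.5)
Your proposal is correct and follows the paper's route: the paper's proof is the one-line statement that the corollary follows from Theorem~\ref{t_100} and Corollary~\ref{c_3}. Your Steps 1--3 spell out the bookkeeping the paper leaves implicit, namely $\vol(\ma[\mathcal{S}])=\vol(\mx_{\mathcal{S}})^2$ and $\sigma_n(\mx)^2=\lambda_n(\ma)$, so that doubling the bound of Corollary~\ref{c_3} gives the stated inequality with the same constant $\frac{k-1}{2k-1}$.
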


\begin{proof}
This follows from Theorem \ref{t_100} and
Corollary~\ref{c_3}.
\end{proof}

Theorem~\ref{thm:srrcholesky} below shows that Algorithm~\ref{alg_srrchol}
with relaxation
factor $\eta > 1$ is a 1-interchange algorithm for maximizing the log-volume,
and implies that the larger~$\eta$, the
larger the error in the log-volume.

\begin{theorem}\label{thm:srrcholesky}
Let $\ma\in\rnn$ be symmetric positive definite, and
let $0<k<n$ be a fixed integer.
Let $\ma_k^*\in\real^{k\times k}$ be a principal submatrix of $\ma$ that maximizes $\log\vol(\mb)$ among all principal submatrices $\mb\in\real^{k\times k}$ of $\ma$. 
    
Then Algorithm~\ref{alg_srrchol} with relaxation factor $\eta>1$ is a 1-interchange algorithm in the sense of Theorem~\ref{t_53} for maximizing the log-volume, and its output matrix $\ma_k\equiv\ma[\mathcal{S}]$ satisfies 
    \[ 0 \le \log\vol(\ma_k^*)-\log\vol(\ma_k) \le  \frac{k}{2k-1}\log\vol(\ma_k^*/\lambda_n(\ma)) + \frac{k^2}{2k-1}\log \sqrt{\eta}, \]
where $\frac{k-1}{2k-1}<\frac{1}{2}$ and
$\frac{k^2}{2k-1}\leq k$. 
\end{theorem}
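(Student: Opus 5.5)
The plan is to reduce to the column-selection setting and then invoke the additive-relaxation part of Theorem~\ref{t_53}, in the same way that Theorem~\ref{c_54} was obtained for Algorithm~\ref{alg_gue}.

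First, I write $\ma=\mx^T\mx$ with $\mx\in\rnn$ nonsingular, for instance a Cholesky factor. Then $\sigma_n(\mx)^2=\lambda_n(\ma)$. For any index set $\mathcal{S}$ we have $\vol(\ma[\mathcal{S}])=\det(\mx_{\mathcal{S}}^T\mx_{\mathcal{S}})=\vol(\mx_{\mathcal{S}})^2$, so $\log\vol(\ma[\mathcal{S}])=2\log\vol(\mx_{\mathcal{S}})$. By Theorem~\ref{t_100}, Algorithm~\ref{alg_srrchol} selects the same index sets as Algorithm~\ref{alg_gue} applied to $\mx$. The swap test $\vol(\ma[\mathcal{T}])>\sqrt{\eta}\,\vol(\ma[\mathcal{S}])$ in Algorithm~\ref{alg_srrchol} is read directly on the log scale: a swap is accepted iff $f(\mathcal{T})>f(\mathcal{S})+\log\sqrt{\eta}$, where $f(\mathcal{S})\equiv\log\vol(\ma[\mathcal{S}])$. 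This is exactly condition (\ref{e_add}) with $\theta_a=\log\sqrt\eta$, so Algorithm~\ref{alg_srrchol} is a 1-interchange algorithm in the sense of Theorem~\ref{t_53}.

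Next, to meet the hypotheses of Theorem~\ref{t_53}, I scale. I set $g(\mathcal{S})\equiv\log\vol(\ma[\mathcal{S}]/\lambda_n(\ma))=f(\mathcal{S})-|\mathcal{S}|\log\lambda_n(\ma)$.
\begin{itemize}
\item Submodularity: $g$ is $f$ plus a modular function, and $f$ is twice the log-volume of column submatrices of $\mx$, so $g$ is submodular by Theorem~\ref{t_logvol}.
\item Non-negative and non-decreasing: $g(\mathcal{S})=2\log\vol(\mx_{\mathcal{S}}/\sigma_n(\mx))$, and $\mx/\sigma_n(\mx)$ has all singular values at least one, so Lemma~\ref{l_1} applies. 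Equivalently, Cauchy interlacing gives eigenvalues of $\ma[\mathcal{S}]/\lambda_n(\ma)$ at least $1$.
\item Normalization: $g(\emptyset)=0$.
\end{itemize}
Because all compared sets have cardinality $k$, the scaling shifts $f$ by the same constant $k\log\lambda_n(\ma)$ on every candidate set. Hence the acceptance test, the local optimality, and the difference $f(\mathcal{S}_k^*)-f(\mathcal{S}_k)$ are all unchanged when $f$ is replaced by $g$.

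Applying Theorem~\ref{t_53} to $g$ with $\theta_a=\log\sqrt\eta$ then gives
\[0\le\log\vol(\ma_k^*)-\log\vol(\ma_k)\le\frac{k-1}{2k-1}\log\vol(\ma_k^*/\lambda_n(\ma))+\frac{k^2}{2k-1}\log\sqrt\eta.\]
The coefficient $\frac{k-1}{2k-1}$ is at most $\frac{k}{2k-1}$, and $\log\vol(\ma_k^*/\lambda_n(\ma))\ge0$. So this implies the stated bound with $\frac{k}{2k-1}$, and in fact the sharper one. The lower bound $0$ is immediate from optimality of $\ma_k^*$.

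The main obstacle I expect is the bookkeeping of the factor $2$ between $\log\vol(\ma[\mathcal{S}])$ and $\log\vol(\mx_{\mathcal{S}})$. If I routed the argument through Theorem~\ref{c_54} for $\mx$, where the threshold $\sqrt\eta$ on $\vol(\ma)$ corresponds to $\eta^{1/4}$ on $\vol(\mx)$, I would have to double and rescale the bound carefully. Applying Theorem~\ref{t_53} directly to $f$ on $\ma$ avoids this. The only delicate checks left are that the relaxed local-optimality condition is stated in terms of $f$, and that the scaling by $\lambda_n(\ma)$ preserves it, which holds because the cardinality is fixed.
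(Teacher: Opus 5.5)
Your argument is correct, but it is organized differently from the paper's. The paper's proof is the one-line reduction ``Theorems~\ref{t_100} and~\ref{c_54}'': it transfers Algorithm~\ref{alg_srrchol} to Algorithm~\ref{alg_gue} on a factor $\mx$ with $\ma=\mx^T\mx$, invokes the column-level relaxed bound, and translates back. You instead apply Theorem~\ref{t_53} directly to $g(\mathcal{S})=\log\vol(\ma[\mathcal{S}]/\lambda_n(\ma))$ with $\theta_a=\log\sqrt{\eta}$. You use $\mx$ only to certify submodularity (Theorem~\ref{t_logvol}) and non-negativity and monotonicity (Lemma~\ref{l_1}), so in effect you redo Theorem~\ref{c_54} at the level of $\ma$.

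What this buys is exactly the bookkeeping you flagged. Since $\vol(\ma[\mathcal{S}])=\vol(\mx_{\mathcal{S}})^2$, the test $\vol(\ma[\mathcal{T}])>\sqrt{\eta}\,\vol(\ma[\mathcal{S}])$ is Algorithm~\ref{alg_gue} with relaxation factor $\eta^{1/4}$, not $\eta$. The paper's route therefore yields the $\frac{k^2}{2k-1}\log\sqrt{\eta}$ term only if Theorem~\ref{c_54} is applied with $\eta^{1/4}$ and the result is doubled, a conversion the cited proof leaves implicit. Your direct route makes this automatic, and your appeal to Theorem~\ref{t_100} is then not needed; the identity $\ma[\mathcal{S}]=\mx_{\mathcal{S}}^T\mx_{\mathcal{S}}$ suffices.

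Your route also shows that the coefficient is really $\frac{k-1}{2k-1}$, matching Theorem~\ref{c_54} and the trailing remark ``$\frac{k-1}{2k-1}<\frac12$'' in the statement. The stated $\frac{k}{2k-1}$ is weaker and follows, as you note, from $\log\vol(\ma_k^*/\lambda_n(\ma))\ge 0$. The paper's route, in exchange, is shorter and reuses the column-selection results wholesale.
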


\begin{proof}
This follows from Theorems \ref{t_100} and~\ref{c_54}.
\end{proof}

As in Section~\ref{ssec:logvolspsd}, we can extend Theorem~\ref{thm:srrcholesky} to symmetric positive semidefinite matrices.
\begin{theorem}\label{thm:smoothed_psd2}
Let $\ma\in\rnn$ be symmetric positive semidefinite,
and let $k$ be a fixed integer with $0<k\le \rank(\ma)$.
Let $\ma_k^*\in\real^{k\times k}$ be a principal submatrix of~$\ma$ that maximizes $\log\vol(\mb)$ among all principal submatrices 
$\mb\in\real^{k\times k}$ of $\ma$. 

Define the perturbation $\mh\equiv \ma+\epsilon\mg\mg^T$,
where $\epsilon>0$ and $\mg\in\rnn$ is a standard Gaussian random matrix.   Let $\mh_k \equiv \mh[\mathcal{S}]$ be the output from Algorithm~\ref{alg_srrchol}, and $\ma_k\equiv \ma[\mathcal{S}]$ be the corresponding submatrix.

Let $0 < \delta < 1$ represent the failure probability and define
\[ \Delta_1 \equiv \epsilon\left(\frac{\delta}{2n}\right)^2 , \qquad \Delta_2 = (2\sqrt{n} + \sqrt{2\log(4/\delta)})^2. \]
If
$$0 < \epsilon < \min\left\{\frac{\lambda_{k}(\ma_k^*)}{2\Delta_2}, \frac{\lambda_k(\ma)}{\Delta_2(3+ 2k(n-k))} \right\},$$ 
then, with probability at least $1-\delta$, 
\[\begin{aligned} 0 \le \log\vol(\ma_k^*)- \log\vol(\ma_k) \le & \>  \frac12 \log\vol(\ma_k^*/\Delta_1)  + k(2\eta_2 + \eta_1),
\end{aligned}\]
where $\eta_1 \equiv \epsilon\Delta_2/\lambda_{k}(\mh_k) \le 1/2$ and $\eta_2 \equiv \epsilon\Delta_2/\lambda_{k}(\ma_k^*) \le 1/2$. 
\end{theorem}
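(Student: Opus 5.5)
The plan is to reduce Theorem~\ref{thm:smoothed_psd2} to a comparison of log-volumes between $\ma$ and the perturbed matrix $\mh=\ma+\epsilon\mg\mg^T$. On $\mh$ we apply the positive-definite 1-interchange bound (Theorem~\ref{thm:srrcholesky}, or Corollary~\ref{c_100} when $\eta=1$). Write $\me\equiv\epsilon\mg\mg^T\succeq 0$.

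\textbf{Step 1: a high-probability event.} We work on an event of probability at least $1-\delta$ on which two bounds hold simultaneously:
\begin{itemize}
\item $\lambda_1(\me)\le\epsilon\Delta_2$, from the Gaussian norm bound $\|\mg\|_2\le 2\sqrt n+\sqrt{2\log(4/\delta)}$, which fails with probability at most $\delta/2$;
\item $\lambda_n(\mh)\ge\lambda_n(\me)=\epsilon\sigma_n(\mg)^2\ge\Delta_1$, from the smallest-singular-value estimate for square Gaussian matrices, $\sigma_n(\mg)\ge\delta/(2n)$ with failure probability at most $\delta/2$.
\end{itemize}
A union bound gives the event. On it, $\mh$ is positive definite with $\lambda_n(\mh)\ge\Delta_1$.

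\textbf{Step 2: compare eigenvalues of principal submatrices.} Principal submatrices inherit $\ma[\mathcal{S}]\preceq\mh[\mathcal{S}]\preceq\ma[\mathcal{S}]+\epsilon\Delta_2\mi$. Weyl's inequality~\eqref{eqn:weyl2} then gives $\lambda_j(\ma[\mathcal{S}])\le\lambda_j(\mh[\mathcal{S}])\le\lambda_j(\ma[\mathcal{S}])+\epsilon\Delta_2$. Consequences:
\begin{itemize}
\item Upper bound on $\ma_k^*$: $\log\vol(\mh[\mathcal{S}^*])\le\log\vol(\ma_k^*)+\sum_j\log(1+\epsilon\Delta_2/\lambda_j(\ma_k^*))\le\log\vol(\ma_k^*)+k\eta_2$, using $\log(1+x)\le x$.
\item Lower bound on the output: $\log\vol(\ma_k)\ge\log\vol(\mh_k)+\sum_j\log(1-\epsilon\Delta_2/\lambda_j(\mh_k))\ge\log\vol(\mh_k)-2k\eta_1$. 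This uses $\log(1-x)\ge-2x$ for $x\le1/2$ and requires $\eta_1\le 1/2$, which is where the hypotheses on $\epsilon$ enter.
\item Reverse direction for $\ma_k^*$: since $\mh_k^*$ maximizes over $\mh$, $\log\vol(\mh_k^*)\ge\log\vol(\mh[\mathcal{S}^*])\ge\log\vol(\ma_k^*)$.
\end{itemize}
The bookkeeping must distribute these terms so the total additive error matches $k(2\eta_2+\eta_1)$. If the constants do not line up, I will choose on which side to absorb the factor~2 accordingly.

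\textbf{Step 3: apply the 1-interchange bound.} Algorithm~\ref{alg_srrchol} is applied to $\mh$, so Theorem~\ref{thm:srrcholesky} (with $\lambda_n(\mh)\ge\Delta_1$) gives
\[
\log\vol(\mh_k^*)-\log\vol(\mh_k)\le\tfrac12\log\vol(\mh_k^*/\Delta_1).
\]
This is stated for $\eta=1$; for $\eta>1$ there is an extra term $\frac{k^2}{2k-1}\log\sqrt\eta$, which I would either assume absent or carry along. To get back to $\ma$, bound $\log\vol(\mh_k^*)\le\log\vol(\ma[\mathcal{S}_{\mh}^*])+$ perturbation terms $\le\log\vol(\ma_k^*)+k\eta_2$, by optimality of $\ma_k^*$. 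Combining this with Step 2 yields
\[
\log\vol(\ma_k^*)-\log\vol(\ma_k)\le\tfrac12\log\vol(\ma_k^*/\Delta_1)+k(2\eta_2+\eta_1)
\]
after collecting terms.

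\textbf{Main obstacle.} The hardest step is showing $\eta_1\le1/2$, i.e.\ $\lambda_k(\mh_k)\ge2\epsilon\Delta_2$. This needs a lower bound on the smallest eigenvalue of the \emph{selected} submatrix, which comes from the strong rank-revealing Cholesky guarantee of~\cite{gu2004strong}: $\lambda_k(\mh_k)\ge\lambda_k(\mh)/(1+\eta\,k(n-k))$ or similar. Then $\lambda_k(\mh)\ge\lambda_k(\ma)$ together with the hypothesis $\epsilon<\lambda_k(\ma)/(\Delta_2(3+2k(n-k)))$ gives the required inequality. I would first verify the exact form of the Gu--Eisenstat Cholesky eigenvalue bound so that the constant $3+2k(n-k)$ comes out. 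The condition $\eta_2\le1/2$ is immediate from $\epsilon<\lambda_k(\ma_k^*)/(2\Delta_2)$. Finally, $\eta_1\le 1/2$ ensures $\ma_k$ is nonsingular, so $\log\vol(\ma_k)$ is finite and the left inequality $0\le\cdot$ holds by optimality of $\ma_k^*$.
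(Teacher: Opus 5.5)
Your outline follows the paper's route: the two high-probability events, the 1-interchange bound applied to $\mh$, and a strong rank-revealing lower bound on $\lambda_k(\mh_k)$ that gives $\eta_1\le 1/2$. That last part is fine. However, the following step in your Step~3 does not hold:
\[
\log\vol(\mh_k^*)\le\log\vol(\ma[\mathcal{S}_{\mh}^*])+\sum_{j=1}^k\log\bigl(1+\epsilon\Delta_2/\lambda_j(\ma[\mathcal{S}_{\mh}^*])\bigr)\le\log\vol(\ma_k^*)+k\eta_2 .
\]
Optimality of $\ma_k^*$ bounds $\det\ma[\mathcal{S}_{\mh}^*]$. But the perturbation terms depend on the eigenvalues of $\ma[\mathcal{S}_{\mh}^*]$, which can be much smaller than $\lambda_k(\ma_k^*)$, or even zero. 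Your first bullet in Step~2 bounds $\mh[\mathcal{S}^*]$, which is the wrong index set.

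In fact the inequality cannot be derived from the sandwich $\ma\preceq\mh\preceq\ma+\epsilon\Delta_2\mi$, which is all your argument uses. Take $k=2$ and $\ma=\mx^T\mx$ with $\mx=[\ve_1,\ \ve_2,\ \ve_1+\ve_2,\ \ve_1]\in\real^{2\times 4}$. Every $2\times 2$ principal minor of $\ma$ is at most $1$, so $\ma_k^*=\ma[\{1,2\}]=\mi_2$ is a maximizer, and $\eta_2=\epsilon\Delta_2\equiv c$. Yet $\mh=\ma+c\mi$ has $\det\mh[\{3,4\}]=1+3c+c^2>e^{2c}=e^{k\eta_2}$ for every $0<c\le 1/2$, and the hypotheses on $\epsilon$ hold for small $c$. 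The paper's proof asserts the same inequality without argument (``Similarly, we derive the upper bound'' in Step~3 of Appendix~\ref{app_a}), so following the paper does not close this gap.

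The bookkeeping also cannot be fixed by shifting constants. $\eta_1$ and $\eta_2$ are unrelated quantities, so your (correct) loss $2k\eta_1$, which comes from $\log(1-x)\ge-2x$, cannot be traded for $k\eta_1$ plus multiples of $\eta_2$. The coefficient $1$ on $\eta_1$ in the statement comes from Lemma~\ref{lemma:logvolperturb}, whose lower bound tacitly uses $\log(1-\tau)\ge-\tau$; this is false for every $\tau>0$.

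One way around both problems is never to bound $\mh_k^*$ from above. Set $f(\mathcal{T})\equiv\log\vol(\mh[\mathcal{T}]/\lambda_n(\mh))$. The proof of Theorem~\ref{t_53} with $\theta_a=0$ shows $f(\mathcal{S}_{\mh}^*)\le\frac{2k-1}{k}f(\mathcal{S})$ for the output $\mathcal{S}$. Combine this with two facts you already have:
\begin{itemize}
\item your third bullet, $\log\vol(\ma_k^*)\le\log\vol(\mh_k^*)$, which is exact in the semidefinite case;
\item $\log\vol(\mh_k)\le\log\vol(\ma_k)+2k\eta_1$.
\end{itemize}
Solving for $\log\vol(\ma_k)$ and using $\lambda_n(\mh)\ge\Delta_1$ gives
\[
0\le\log\vol(\ma_k^*)-\log\vol(\ma_k)\le\frac{k-1}{2k-1}\log\vol(\ma_k^*/\Delta_1)+2k\eta_1 ,
\]
with no $\eta_2$ at all. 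This is a correct bound of the same type. To recover the exact stated right-hand side, you would still need to absorb the excess $k\eta_1-2k\eta_2$ into the slack $\frac{1}{2(2k-1)}\log\vol(\ma_k^*/\Delta_1)$, and that requires its own argument.
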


\begin{proof}
See Appendix~\ref{app_d}.   
\end{proof}


%% file: sec7.tex
\section{Possible directions for future work}\label{s_conc}

We have established connections between column subset selection and set submodular functions, and shown that log-volume is a 
set submodular function on the columns of a matrix.

Avenues for future work
include connections to submodular optimization for algorithm development. Since log-volume 
is in general not monotone, one 
could look towards greedy algorithms for non-monotone functions in~\cite{Buch25,Buch14}. 
There are other types of algorithms for set submodular functions such as lazy greedy, and stochastic greedy~\cite{mirzasoleiman2015lazier}. The distributed greedy method in~\cite{Mirza2013} appears to be related to the notion of tournament pivoting in rank-revealing QR factorizations~\cite{demmel2015communication}. There may also be opportunities for randomized QR factorizations such as~\cite{duersch2017randomized} and~\cite{grigori2025randomized}. 
Another option is to relax submodularity and 
and replace it by approximate submodularity \cite{DasK2018}
instead. Extending the notion of set submodularity to other algorithms such as local maximum volume submatrix~\cite[Algorithms 1 and 2]{damle2025estimating} and criteria such as S-opt~\cite{SX2016,LCS2024} are also of interest. 
Finally, the more restrictive notion of \textit{string submodularity} \cite{Street2007,Zhang2016} may be applicable to other greedy
algorithms for column subset selection.

%% file: sec8.tex
\appendix\label{s_app}
\section{Proof of Theorem~\ref{thm:smoothed}}
Before proving Theorem~\ref{thm:smoothed} (Section~\ref{app_a}),
Theorem~\ref{thm:smoothed2} (Section~\ref{app_c}),
Theorem~\ref{thm:smoothed_psd} (Section~\ref{app_b})
and Theorem~\ref{thm:smoothed_psd2} (Section~\ref{app_d}), we
present three auxiliary results for: log volume 
perturbations (Lem\--ma~\ref{lemma:logvolperturb}), 
small singular values 
of matrices perturbed by a Gaussian (Theorem~\ref{thm:gaussthm1}), 
and norms of Gaussians (Theorem~\ref{thm:gaussthm2}).

\begin{lemma}\label{lemma:logvolperturb}
    Let $\mc, \md \in \real^{m\times k}$ with $\md \equiv \mc + \me$  and $\rank(\mc) = k$. If $\tau \equiv \|\me\|_2 / \sigma_k(\mc)\le 1/2$, then $\rank(\md) =k$ and 
    \[    -k\tau + \log\vol(\mc) \le \log\vol(\md)  \le   \log\vol(\mc) +  k\tau  .\]
   \end{lemma}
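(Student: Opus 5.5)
The plan is to compare the singular values of $\md$ and $\mc$ one at a time with Weyl's inequality~\eqref{eqn:weyl}, and then sum the resulting logarithmic bounds.

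\textbf{Rank and Weyl.} Inequality~\eqref{eqn:weyl} gives
\[ |\sigma_j(\md)-\sigma_j(\mc)|\le \|\me\|_2 = \tau\,\sigma_k(\mc), \qquad 1\le j\le k. \]
Since $\sigma_j(\mc)\ge\sigma_k(\mc)$, dividing by $\sigma_j(\mc)$ yields the relative bound
\[ 1-\tau \le \frac{\sigma_j(\md)}{\sigma_j(\mc)} \le 1+\tau . \]
In particular $\sigma_k(\md)\ge (1-\tau)\sigma_k(\mc)\ge \tfrac12\sigma_k(\mc)>0$, so $\rank(\md)=k$. Hence both volumes are products of $k$ positive singular values, as in Definition~\ref{d_vol}.

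\textbf{Summing logarithms.} We have
\[ \log\vol(\md)-\log\vol(\mc)=\sum_{j=1}^k \log\bigl(\sigma_j(\md)/\sigma_j(\mc)\bigr). \]
Each term lies in $[\log(1-\tau),\log(1+\tau)]$.

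\textbf{Scalar inequalities.} The upper bound follows from $\log(1+\tau)\le\tau$. For the lower bound, the elementary estimate $\log(1-\tau)\ge -\tau/(1-\tau)$ gives only $-2\tau$ when $\tau\le 1/2$, which is too weak. The sharper fact we need is $\log(1-\tau)\ge -c\tau$ with $c=2\log 2\approx 1.386$ on $[0,1/2]$, which follows from concavity of $\log(1-\tau)$. That is still not $-\tau$.

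\textbf{Main obstacle: the lower bound $-k\tau$.} With $\tau\le 1/2$, the scalar estimate above does not deliver the constant $1$ claimed in the statement. In the worst case ($\tau=1/2$ and $\sigma_j(\mc)=\sigma_k(\mc)$ for every $j$) the per-term loss $\log(1-\tau)=-\log 2$ really exceeds $\tau=1/2$. So, as written, the lower bound can fail unless a refinement is available.

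I would first try to exploit the fact that only $\sigma_k(\mc)$ sits in the denominator. For $j<k$ the relative perturbation is $\tau\sigma_k(\mc)/\sigma_j(\mc)$, which is smaller. This does not help in the equal-singular-value worst case, however.

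If no refinement works, I would prove the statement with the constant $2\log 2$ replacing $1$, or equivalently with $-2k\tau$. That would propagate only to the constants multiplying $\eta_1,\eta_2$ in Theorem~\ref{thm:smoothed}, where the error term $k(2\eta_2+\eta_1)$ already carries slack.
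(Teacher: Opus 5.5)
Your route is exactly the paper's: Weyl's inequality~\eqref{eqn:weyl}, the relative bounds $1-\tau\le\sigma_j(\md)/\sigma_j(\mc)\le 1+\tau$, and a sum of logarithms. The paper closes with ``$\log(1+x)\le|x|$ for $|x|\le\frac12$.'' That inequality only yields the upper bound. The lower bound would require $\log(1-\tau)\ge-\tau$, which fails for every $\tau\in(0,1)$.

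So the obstruction you hit is an error in the lemma itself, and it is not confined to $\tau=\frac12$. Suppose $\sigma_1(\mc)=\sigma_k(\mc)$ (for instance, $\mc$ has orthonormal columns) and $\me\equiv-\tau\mc$ with $0<\tau\le\frac12$. Then $\|\me\|_2=\tau\,\sigma_k(\mc)$, $\md=(1-\tau)\mc$, and
\[
\log\vol(\md)=\log\vol(\mc)+k\log(1-\tau)<\log\vol(\mc)-k\tau .
\]
Hence no refinement exploiting $\sigma_j(\mc)>\sigma_k(\mc)$ can rescue the constant $1$, and you can drop that search. What your argument proves, and what this example shows to be sharp, is
\[
k\log(1-\tau)\le\log\vol(\md)-\log\vol(\mc)\le k\log(1+\tau)\le k\tau .
\]
By concavity, $k\log(1-\tau)\ge-2\log 2\,k\tau$ on $[0,\frac12]$. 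The bound $-2k\tau$ is a weaker alternative, not an equivalent one.

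One correction to your final remark about slack in Theorem~\ref{thm:smoothed}. In Appendix~\ref{app_a} the lower bound is used in Step~3, producing the $k\eta_2$ terms, and in Step~4, producing $k\eta_1$.

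\begin{itemize}
\item The coefficient $2$ on $\eta_2$ arises from rounding $1+1/e$ up. It survives the correction because $2\log 2\,(1+1/e)<2$.
\item The coefficient $1$ on $\eta_1$ comes from a single application in Step~4 with no slack. The argument then only gives $2\log 2$ there, so the error term should become, for example, $k(2\eta_2+2\log 2\,\eta_1)$.
\end{itemize}

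The analogous adjustment applies to Theorems~\ref{thm:smoothed2}, \ref{thm:smoothed_psd} and~\ref{thm:smoothed_psd2}, whose proofs reuse this argument.
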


\begin{proof}
    Weyl's inequality~\eqref{eqn:weyl} implies
    \[ \sigma_j(\mc) - \|\me\|_2 \le \sigma_j(\md) \le \sigma_j(\mc) + \|\me\|_2, \qquad 1 \le j \le k. \]
From  $\|\me\|_2 \le \sigma_k(\mc)/2$ follows $\sigma_k(\md) \ge  \sigma_k(\mc)/2 > 0$, hence $\rank(\md) =k$. Factoring
    \[ \sigma_j(\mc)\left(1 - \frac{\|\me\|_2}{\sigma_j(\mc)} \right) \le \sigma_j(\md) \le  \sigma_j(\mc)\left(1 +  \frac{\|\me\|_2}{\sigma_j(\mc)} \right), \qquad 1 \le j\le k \] 
gives
    \[ 1 - \tau \le \frac{\sigma_j(\md)}{\sigma_j(\mc)} \le 1 + \tau, \qquad 1 \le j \le k.\] 
After taking logarithms and summing, use the fact that $\log(1+x)\le |x|$ for $|x| \le \frac12$.  
\end{proof}

\begin{theorem}\label{thm:gaussthm1} 
Let $\mx \in \real^{n\times n}$ be a fixed matrix and $\mz \equiv \mx + \epsilon \mg$, where $\mg$ is a standard Gaussian random matrix and $\epsilon>0$. Then, for $0 < \delta < 1$,  
\[ \mathbb{P}\left\{ \sigma_n(\mz) \le \frac{\epsilon \delta}{2.35\sqrt{n}} \right\} \le \delta.   \] 
\end{theorem}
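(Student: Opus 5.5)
This bound is the Sankar--Spielman--Teng smoothed analysis of the smallest singular value of a Gaussian perturbation (\cite{sankar2006smoothed}). The plan is to invoke their result and only check the constants. Their Theorem 3.3 states that for any fixed $\mx\in\rnn$ and $\mz=\mx+\epsilon\mg$ with $\mg$ standard Gaussian,
\[
\mathbb{P}\{\sigma_n(\mz)\le x\}\le 2.35\,\frac{x\sqrt{n}}{\epsilon}\qquad \text{for all } x\ge 0 .
\]
Substituting $x=\epsilon\delta/(2.35\sqrt{n})$ makes the right-hand side equal to $\delta$, which is the claim. So the proof is essentially one substitution, plus a check that their normalization matches ours: their $\sigma$ is our $\epsilon$, the entries of $\mg$ have variance one, and the bound holds uniformly in $\mx$ with no norm assumption. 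The statement here has no norm assumption either, so it fits.

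If one wants a self-contained argument, I would reproduce the standard proof in three steps.

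\textbf{Step 1.} Use $\sigma_n(\mz)=1/\|\mz^{-1}\|_2$ and reduce to the columns of $\mz^{-1}$. Let $\vu$ be a random unit vector, uniform on the sphere. For a fixed matrix, $\|\mz^{-1}\vu\|\ge c\,\|\mz^{-1}\|_2/\sqrt{n}$ holds with constant probability, so it suffices to bound $\mathbb{P}\{\|\mz^{-1}\vu\|\ge t\}$.

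\textbf{Step 2.} Rotate so that $\vu=\ve_n$. The vector $\mz^{-1}\ve_n$ has norm equal to the reciprocal of the distance from the last row of $\mz$ to the span of the other rows. Conditioning on those other rows, this distance is $|\vn^T\vz_n|$ for a fixed unit normal $\vn$. Since $\vz_n$ is a Gaussian vector with covariance $\epsilon^2\mi$, the quantity $\vn^T\vz_n$ is one-dimensional Gaussian with standard deviation $\epsilon$, so
\[
\mathbb{P}\{|\vn^T\vz_n|\le s\}\le \sqrt{2/\pi}\,s/\epsilon .
\]

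\textbf{Step 3.} Combine the two steps, optimizing the constant-probability event from Step~1.

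The main obstacle is recovering the specific constant $2.35$. It comes from optimizing the spherical-cap probability in Step~1 against the Gaussian density bound in Step~2, and redoing that optimization is tedious. I would not rederive it; I would cite \cite[Theorem 3.3]{sankar2006smoothed} directly. Later, in the proof of Theorem~\ref{thm:smoothed}, the paper uses the weaker constant $4.7=2\cdot 2.35$ in $\Delta_1$, presumably to split the failure probability $\delta$ between this event and a Gaussian norm bound. That suggests the present statement is meant purely as a quoted lemma.
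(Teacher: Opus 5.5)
Your proposal is correct and matches the paper's proof: the paper also quotes \cite[Theorem 3.3]{sankar2006smoothed}, in the form $\mathbb{P}\{\|\mz^{-1}\|_2\ge t\}\le 2.35\sqrt{n}/(\epsilon t)$, sets the right-hand side equal to $\delta$, and converts via $\|\mz^{-1}\|_2=1/\sigma_n(\mz)$, which is your substitution $x=1/t=\epsilon\delta/(2.35\sqrt{n})$. Your optional self-contained sketch and your reading of $4.7$ as the $\delta/2$ split in the proof of Theorem~\ref{thm:smoothed} are both accurate, though neither is needed.
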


\begin{proof}
    From~\cite[Theorem 3.3]{sankar2006smoothed} follows for $t>0$,
    \[ \mathbb{P}\left\{ \|\mz^{-1}\|_2 \ge t\right\} \le \frac{2.35\sqrt{n}}{\epsilon t}.  \] 
    Set the right hand side equal to $\delta$, solve for $t$, and write $\|\mz^{-1}\|_2 = 1/\sigma_n(\mz)$. 
\end{proof}

\begin{theorem}\label{thm:gaussthm2} Let $\mg\in \real^{n\times n}$ be a standard Gaussian random matrix. Then, for $0 < \delta < 1$,  
\[ \mathbb{P}\{ \|\mg\|_2 >  2\sqrt{n} + \sqrt{2\log(2/\delta)} \} \le \delta. \]
\end{theorem}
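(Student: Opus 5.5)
The plan is to deduce the bound from Gaussian concentration for Lipschitz functions, combined with a bound on the expected spectral norm. The map $\mg\mapsto\|\mg\|_2$ is $1$-Lipschitz with respect to the Frobenius norm on $\rnn$, because
\[ \bigl|\|\mg\|_2-\|\mg'\|_2\bigr| \le \|\mg-\mg'\|_2 \le \|\mg-\mg'\|_F . \]
The Frobenius norm is the Euclidean norm on the $n^2$ independent standard Gaussian entries. So the Gaussian concentration inequality for $1$-Lipschitz functions (Tsirelson--Ibragimov--Sudakov) applies and gives
\[ \mathbb{P}\{\|\mg\|_2 > \mathbb{E}\|\mg\|_2 + t\} \le e^{-t^2/2}, \qquad t>0 . \]

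Next I bound the mean by Gordon's inequality (equivalently Slepian's comparison, or Chevet's inequality) for an $m\times n$ standard Gaussian matrix:
\[ \mathbb{E}\|\mg\|_2 \le \sqrt{m}+\sqrt{n} . \]
With $m=n$ this gives $\mathbb{E}\|\mg\|_2\le 2\sqrt{n}$. I will cite this as standard, e.g.\ from Davidson--Szarek or Vershynin's notes, rather than reprove it. This is the only genuinely nontrivial ingredient; if I had to justify it I would write $\|\mg\|_2=\max_{\|u\|=\|v\|=1}u^T\mg v$ and compare this Gaussian process with $u^T\vg+v^T\vh$, where $\vg,\vh$ are independent standard Gaussian vectors.

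Finally I choose $t=\sqrt{2\log(1/\delta)}$, so that $e^{-t^2/2}=\delta$. This gives
\[ \mathbb{P}\{\|\mg\|_2>2\sqrt n+\sqrt{2\log(1/\delta)}\}\le\delta . \]
Since $\log(2/\delta)\ge\log(1/\delta)$, the event in the statement is contained in this one, so the stated bound follows. The $2/\delta$ in the statement presumably comes from a two-sided version of the concentration inequality, which carries a factor $2$; it only weakens the claim. The main obstacle is therefore just invoking the expectation bound correctly; everything else is routine.
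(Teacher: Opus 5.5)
Your proof is correct and is essentially the paper's argument with the black box opened. The paper simply invokes Vershynin's Corollary 5.35, $\mathbb{P}\{\|\mg\|_2>2\sqrt{n}+t\}\le 2e^{-t^2/2}$, and solves $2e^{-t^2/2}=\delta$; that corollary is itself proved by exactly your combination of Gordon's bound $\mathbb{E}\|\mg\|_2\le 2\sqrt{n}$ and Gaussian concentration for $1$-Lipschitz functions. Your one-sided tail gives the slightly sharper $\sqrt{2\log(1/\delta)}$, and as you note this implies the stated bound.
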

\begin{proof}
    From~\cite[Corollary 5.35]{Vershynin2009} follows for $t>0$
    \[\mathbb{P}\left\{ \|\mg\|_2  > 2\sqrt{n} + t\right\} \le 2\exp(-t^2/2).\]
    Set the right hand side equal to $\delta$ and solve for $t$. 
\end{proof}

\subsection{Proof of Theorem~\ref{thm:smoothed}}\label{app_a}
From $\rank(\mx) \ge k$ follows that $\rank(\mx_k^*)=k$.

\paragraph{Step 1: Concentration bounds} Let $\mathcal{E} = \mathcal{E}_1 \cap \mathcal{E}_2$ be an event, where 
\[ \mathcal{E}_1 \equiv \{ \sigma_n(\mz) \ge \Delta_1\}, \qquad  \mathcal{E}_2 \equiv \{\|\mg\|_2 \le \Delta_2\}. \] 
Theorems~\ref{thm:gaussthm1} and~\ref{thm:gaussthm2} imply 
that the complementary events occur with probability
$\mathbb{P}\{\mathcal{E}_1^c\} \le \delta/2$ and $\mathbb{P}\{\mathcal{E}_2^c\} \le \delta/2$. A union bound 
shows that $\mathbb{P}\{\mathcal{E}^c\}\equiv\mathbb{P}\{\mathcal{E}_1^c \cup \mathcal{E}_2^c\} \le \delta$. Thus, $\mathbb{P}(\mathcal{E}) \ge 1 - \delta$. 
The rest of the proof is conditioned on the event $\mathcal{E}$. 

\paragraph{Step 2: Greedy bound for $\mz_k$} Conditioned on the event $\mathcal{E}$, the matrix $\mz$ is nonsingular and $\rank(\mz_k)=k$. Corollary~\ref{c_2} implies 
\[ 0 \le \log\vol(\mz_k^*) - \log\vol(\mz_k) \le \frac1e \log\vol(\mz_k^*/\sigma_n(\mz)). \]
 
\paragraph{Step 3: Bound for $\mz_k^*$} Denote by $\mz_k' \equiv \mx_k^* + \epsilon\mg_k'$ the columns of $\mz$ corresponding to $\mx_k^*$, where $\mg_k'$ represents the columns of $\mg$ corresponding to $\mx_k^*$. The assumptions on $\epsilon$ imply for the event $\mathcal{E}$

\[ \tau \equiv \frac{\|\mg_k'\|_2}{\sigma_k(\mx_k^*)} \le \frac{\|\mg\|_2}{\sigma_k(\mx_k^*) } \le \frac{\epsilon\Delta_2}{\sigma_k(\mx_k^*)} = \eta_2 \le \frac12.  \] 
From the optimality of $\mz_k^*$, Lemma~\ref{lemma:logvolperturb},
and conditioning on~$\mathcal{E}$ follows
\[ \log\vol(\mz_k^*) \ge \log\vol(\mz_k') \ge \log\vol(\mx_k^*) -k\eta_2.\]
Similarly, we derive the upper  bound $\log\vol(\mz_k^*) \le \log(\mx_k^*) + k\eta_2.$

\paragraph{Step 4: Rank of~$\mx_k$} Since $\mz_k$ is the output of Algorithm~\ref{alg_qrcp} applied to $\mz$, 
we have \cite[Theorem 7.2]{GuEis96} 
\[\sigma_k(\mz_k) \ge \frac{\sigma_k(\mz)}{2^k\sqrt{n-k}} \ge \frac{\sigma_k(\mx) - \epsilon\Delta_2}{2^k\sqrt{n-k}}.\]
where second inequality follows from~\eqref{eqn:weyl},
\begin{equation*}
|\sigma_k(\mz)-\sigma_k(\mx)|\le \epsilon\|\mg\|_2 \le \epsilon\Delta_2,
\end{equation*}
and conditioning on the event $\mathcal{E}$. 

Abbreviate $\zeta \equiv 2^k\sqrt{n-k}$. Applying Weyl's inequality~\eqref{eqn:weyl} once again, along with the lower bound for $\sigma_k(\mz_k)$ gives
\[ \sigma_k(\mx_k) \ge \sigma_k(\mz_k) - \epsilon\Delta_2 \ge \zeta^{-1}\sigma_k(\mx) - (1+ \zeta^{-1})\epsilon\Delta_2.\]
Since $\epsilon< \frac{\sigma_k(\mx)}{1 + \zeta} \le \frac{\sigma_k(\mx)}{\Delta_2(1 + 2\zeta)}$, we get $\rank(\mx_k)=k$,
conditioned on~$\mathcal{E}$. From $\rank(\mx) \ge k$ follows $\sigma_k(\mx) > 0$.

Now, let $\mz_k = \mx_k + \epsilon\mg_k$. From  the assumptions on $\epsilon$ follows,
\[ \tau \equiv \frac{\|\mg_k\|_2}{\sigma_k(\mx_k)} \le \frac{\epsilon\Delta_2}{\sigma_k(\mz_k)} = \eta_1 \le   \frac{\epsilon\Delta_2}{(\sigma_k(\mx) - \epsilon\Delta_2)/\zeta}  \le  \frac12.\]
Solving for $\epsilon$ gives $\epsilon \le \frac{\sigma_k(\mx)}{\Delta_2(1+2\zeta)}$. Lemma~\ref{lemma:logvolperturb} 
implies that, conditioned on the event~$\mathcal{E}$,
\[  \log\vol(\mx_k) \ge \log\vol(\mz_k) -  k\eta_1 . \] 

\paragraph{Step 5: Putting everything together} From Step 3, optimality of $\mx_k^*$, and conditioning on the event $\mathcal{E}$ follows
\[ \begin{aligned} 0 \le   \log\vol(\mx_k^*)- \log\vol(\mx_k) \le  & \>\log\vol(\mz_k^*)  + k\eta_2 - \log\vol(\mx_k) \\ \le & \>  \log\vol(\mz_k^*)  - \log\vol(\mz_k) + k(\eta_2 + \eta_1).  \end{aligned}\]
Step 2 implies 
\[0 \le   \log\vol(\mx_k^*)- \log\vol(\mx_k) \le  \frac1e \log\vol(\mz_k^*/\sigma_n(\mz)) + k(\eta_1 + \eta_2). \] 
Plug in the bound for $\sigma_n(\mz)$ from Step 1 and the inequalities $\log\vol(\mz_k^*) \le \log\vol(\mx_k^*) + k\eta_2$ and $1/e \le 1$. Finally, use the fact that $\mathbb{P}(\mathcal{E}) \ge 1 -\delta$.

\subsection{Proof of Theorem~\ref{thm:smoothed2}}\label{app_c}
 The proof is the same as that of Theorem~\ref{thm:smoothed}, but uses Corollary~\ref{c_3} in Step 2, and~\cite[Theorem 3.2]{GuEis96} in Step 4.

\subsection{Proof of Theorem~\ref{thm:smoothed_psd}}\label{app_b}
Weyl's inequality (\ref{eqn:weyl2})
and the symmetric positive semidefiniteness of
$\ma$ imply
\[ \lambda_n(\mh) \ge \lambda_n(\ma) + \epsilon\lambda_n(\mg\mg^T) \ge \epsilon \sigma_n(\mg)^2. \]
Given the event $\mathcal{E}_1 \equiv \{\lambda_n(\mh) \ge \Delta_1 \}$, its complement occurs with probability
$\mathbb{P}(\mathcal{E}_1^c)\le \delta/2$
\cite[Theorem 3.3]{sankar2006smoothed}.
Similarly, for the event $\mathcal{E}_2 \equiv \{\| \mg\mg^T \|_2 \le \Delta_2 \}$, the complement occurs with probability $\mathbb{P}(\mathcal{E}_2^c)\le \delta/2$.
Thus, $\mathcal{E} = \mathcal{E}_1 \cap \mathcal{E}_2$ 
occurs with probability $\mathcal{P}(\mathcal{E}) \ge 1 -\delta$. 

The rest of the proof is similar to that of Theorem~\ref{thm:smoothed}. 

\subsection{Proof of Theorem~\ref{thm:smoothed_psd2}}\label{app_d}
 The proof is similar to that of Theorem~\ref{thm:smoothed_psd} but uses the analysis in~\cite[Section 4]{GuEis96}.  

%% file: ssbib.bib
@article {duersch2017randomized,
    AUTHOR = {Duersch, Jed A. and Gu, Ming},
     TITLE = {Randomized {QR} with column pivoting},
   JOURNAL = {SIAM J. Sci. Comput.},
  FJOURNAL = {SIAM Journal on Scientific Computing},
    VOLUME = {39},
      YEAR = {2017},
    NUMBER = {4},
     PAGES = {C263--C291},
      ISSN = {1064-8275,1095-7197},
          DOI = {10.1137/15M1044680},
}

@book {GovL13,
    AUTHOR = {Golub, G. H. and {Van Loan}, C. F.},
     TITLE = {Matrix computations},
    SERIES = {Johns Hopkins Studies in the Mathematical Sciences},
   EDITION = {Fourth},
 PUBLISHER = {Johns Hopkins University Press, Baltimore, MD},
      YEAR = {2013}
}

@article{taskar2013determinantal,
  title={Determinantal point processes for machine learning},
  author={Kulesza, B and Taskar, A.},
  journal={Found. Trends Mach. Learn.},
  volume={5 (2-3)},
  pages={123-286},
  year={2012},
}

@article{kelmans1983multiplicative,
  title={Multiplicative submodularity of a matrix's principal minor as a function of the set of its rows and some combinatorial applications},
  author={Kelmans, A. K. and Kimelfeld, B. N.},
  journal={Discrete Math.},
  volume={44},
  number={1},
  pages={113--116},
  year={1983},
  publisher={Elsevier}
}

@article{damle2025estimating,
  title={Estimating a matrix's singular values with interpolative decompositions},
  author={Damle, A. and Glas, S. and Townsend, A. and Yu, A.},
  journal={Linear Algebra Appl.},
  volume = {731},
  pages = {306-342},
  year={2026},
}

@article{chen2026optimal,
  title={Optimal Sensor Placement in {G}aussian Processes via Column Subset Selection},
  author={Chen, J. and Ji, H. and Saibaba, A. K.},
  journal={arXiv:2601.20781},
  year={2026}
}

@article{chen2025randomly,
  title={Randomly pivoted {C}holesky: Practical approximation of a kernel matrix with few entry evaluations},
  author={Chen, Y. and Epperly, E. N. and Tropp, J. A. and Webber, R. J.},
  journal={Comm. Pure Appl. Math.},
  volume={78},
  number={5},
  pages={995--1041},
  year={2025},
  publisher={Wiley Online Library}
}

@inproceedings{altschuler2016greedy,
  title={Greedy column subset selection: New bounds and distributed algorithms},
  author={Altschuler, Jason and Bhaskara, Aditya and Fu, Gang and Mirrokni, Vahab and Rostamizadeh, Afshin and Zadimoghaddam, Morteza},
  booktitle={International conference on machine learning},
  pages={2539--2548},
  year={2016},
  organization={PMLR}
}

@article{demmel2015communication,
  title={Communication avoiding rank revealing {QR} factorization with column pivoting},
  author={Demmel, James W and Grigori, Laura and Gu, Ming and Xiang, Hua},
  journal={SIAM J. Matrix Anal. Appl},
  volume={36},
  number={1},
  pages={55--89},
  year={2015},
  publisher={SIAM}
}

@article {gu2004strong,
    AUTHOR = {Gu, M. and Miranian, L.},
     TITLE = {Strong rank revealing {C}holesky factorization},
   JOURNAL = {Electron. Trans. Numer. Anal.},
  FJOURNAL = {Electronic Transactions on Numerical Analysis},
    VOLUME = {17},
      YEAR = {2004},
     PAGES = {76--92},
      ISSN = {1068-9613},
   MRCLASS = {65F05},
  MRNUMBER = {2040798},
}

@article{fornace2024column,
  title={Column and row subset selection using nuclear scores: algorithms and theory for {N}ystr\"om approximation, {CUR} decomposition, and graph {L}aplacian reduction},
  author={Fornace, Mark and Lindsey, Michael},
  journal={arXiv preprint arXiv:2407.01698},
  year={2024}
}

@inproceedings{mirzasoleiman2015lazier,
  title={Lazier than lazy greedy},
  author={Mirzasoleiman, B. and Badanidiyuru, A. and Karbasi, A. and Vondr{\'a}k, J. and Krause, A.},
  booktitle={Proceedings of the AAAI Conference on Artificial Intelligence},
  volume={29},
   year={2015}
}

@article{epperly2025embrace,
  title={Embrace rejection: Kernel matrix approximation by accelerated randomly pivoted {C}holesky},
  author={Epperly, Ethan N and Tropp, Joel A and Webber, Robert J},
  journal={SIAM J. Matrix Anal. Appl},
  volume={46},
  number={4},
  pages={2527--2557},
  year={2025},
  publisher={SIAM}
}

@book{horn2005basic,
TITLE = {The {S}chur complement and its applications},
SERIES = {Numerical Methods and Algorithms},
    VOLUME = {4},
    EDITOR = {Zhang, Fuzhen},
 PUBLISHER = {Springer-Verlag, New York},
      YEAR = {2005},
     PAGES = {xvi+295},
      ISBN = {0-387-24271-6},
        DOI = {10.1007/b105056},
       URL = {https://doi.org/10.1007/b105056},
}

@article{Vershynin2009,
author = {Vershynin, R.},
doi = {10.1017/CBO9780511794308.006},
eprint = {1011.3027},
isbn = {9780511794308},
journal = {Compressed Sensing: Theory and Applications},
pages = {210--268},
title = {{Introduction to the non-asymptotic analysis of random matrices}},
year = {2009}
}

@book {HoJoI,
    AUTHOR = {Horn, R. A. and Johnson, C. R.},
     TITLE = {Matrix analysis},
   EDITION = {Second},
 PUBLISHER = {Cambridge University Press, Cambridge},
      YEAR = {2013}}

@book {Higham2002,
    AUTHOR = {Higham, Nicholas J.},
     TITLE = {Accuracy and stability of numerical algorithms},
   EDITION = {Second},
 PUBLISHER = {Society for Industrial and Applied Mathematics (SIAM),
              Philadelphia, PA},
      YEAR = {2002},
     PAGES = {xxx+680},
      ISBN = {0-89871-521-0},
       DOI = {10.1137/1.9780898718027},
       URL = {https://doi.org/10.1137/1.9780898718027}
}

@article {CM09,
    AUTHOR = {{\c C}ivril, A. and Magdon-Ismail, M.},
     TITLE = {On selecting a maximum volume sub-matrix of a matrix and
              related problems},
   JOURNAL = {Theoret. Comput. Sci.},
    VOLUME = {410},
      YEAR = {2009},
    NUMBER = {47-49},
     PAGES = {4801--4811},
      ISSN = {0304-3975,1879-2294},
        DOI = {10.1016/j.tcs.2009.06.018},
       URL = {https://doi.org/10.1016/j.tcs.2009.06.018},
}

@article {GuEis96,
    AUTHOR = {Gu, M. and Eisenstat, S. C.},
     TITLE = {Efficient algorithms for computing a strong rank-revealing
              {QR} factorization},
   JOURNAL = {SIAM J. Sci. Comput.},
    VOLUME = {17},
      YEAR = {1996},
    NUMBER = {4},
     PAGES = {848--869},
      ISSN = {1064-8275},
       DOI = {10.1137/0917055},
       URL = {https://doi.org/10.1137/0917055},
}

@article {SX2016,
    AUTHOR = {Shin, Y. and Xiu, D.},
     TITLE = {Nonadaptive quasi-optimal points selection for least squares
              linear regression},
   JOURNAL = {SIAM J. Sci. Comput.},
    VOLUME = {38},
      YEAR = {2016},
    NUMBER = {1},
     PAGES = {A385--A411},
      ISSN = {1064-8275,1095-7197},
       DOI = {10.1137/15M1015868},
       URL = {https://doi.org/10.1137/15M1015868},
}

@article {LCS2024,
AUTHOR = {Lauzon, J. T. and Cheung, S. W. and Shin, Y. and Choi, Y. and Copeland, D. M. and Huynh, K.},
     TITLE = {S-{OPT}: a points selection algorithm for hyper-reduction in
              reduced order models},
   JOURNAL = {SIAM J. Sci. Comput.},
     VOLUME = {46},
      YEAR = {2024},
    NUMBER = {4},
     PAGES = {B474--B501},
      ISSN = {1064-8275,1095-7197},
         DOI = {10.1137/22M1484018},
       URL = {https://doi.org/10.1137/22M1484018},
}

@article {Nemhauser1978,
    AUTHOR = {Nemhauser, G. L. and Wolsey, L. A. and Fisher, M. L.},
     TITLE = {An analysis of approximations for maximizing submodular set
              functions. {I}},
   JOURNAL = {Math. Programming},
     VOLUME = {14},
      YEAR = {1978},
    NUMBER = {3},
     PAGES = {265--294},
      ISSN = {0025-5610,1436-4646},
        DOI = {10.1007/BF01588971},
       URL = {https://doi.org/10.1007/BF01588971},
}

@article {BusGo65,
    AUTHOR = {Businger, Peter and Golub, Gene H.},
     TITLE = {{L}inear least squares
              solutions by {H}ouseholder transformations},
   JOURNAL = {Numer. Math.},
     VOLUME = {7},
      YEAR = {1965},
     PAGES = {269--276},
      ISSN = {0029-599X,0945-3245},
         DOI = {10.1007/BF01436084},
       URL = {https://doi.org/10.1007/BF01436084},
}

@article{DasK2018,
  author  = {Das, A. and Kempe, D.},
  title   = {Approximate Submodularity and its Applications: Subset Selection, Sparse Approximation and Dictionary Selection},
  journal = {J. Mach. Learn. Res.},
  year    = {2018},
  volume  = {19},
  number  = {3},
  pages   = {1--34},
  url     = {http://jmlr.org/papers/v19/16-534.html}
}

@article{Corne1977,
author = {Cornuejols, Gerard and Fisher, Marshall L. and Nemhauser, George L.},
title = {Location of Bank Accounts to Optimize Float: An Analytic Study of Exact and Approximate Algorithms},
journal = {Management Sci.},
volume = {23},
number = {8},
pages = {789-810},
year = {1977},
doi = {10.1287/mnsc.23.8.789},
URL = {https://doi.org/10.1287/mnsc.23.8.789},
eprint = {https://doi.org/10.1287/mnsc.23.8.789}
}

@article{sankar2006smoothed,
  title={Smoothed analysis of the condition numbers and growth factors of matrices},
  author={Sankar, Arvind and Spielman, Daniel A and Teng, Shang-Hua},
  journal={SIAM Journal on Matrix Analysis and Applications},
  volume={28},
  number={2},
  pages={446--476},
  year={2006},
  publisher={SIAM}
}

@inproceedings {DEFM15,
    AUTHOR = {Di Summa, M. and Eisenbrand, F. and Faenza, Y. and
              Moldenhauer, C.},
     TITLE = {On largest volume simplices and sub-determinants},
 BOOKTITLE = {Proceedings of the {T}wenty-{S}ixth {A}nnual {ACM}-{SIAM}
              {S}ymposium on {D}iscrete {A}lgorithms},
     PAGES = {315--323},
 PUBLISHER = {SIAM, Philadelphia, PA},
      YEAR = {2015},
       DOI = {10.1137/1.9781611973730.23},
       URL = {https://doi.org/10.1137/1.9781611973730.23},
}

@article {Massei2022,
    AUTHOR = {Massei, S.},
TITLE = {Some algorithms for maximum volume and cross approximation of symmetric semidefinite matrices},
   JOURNAL = {BIT},
      VOLUME = {62},
      YEAR = {2022},
    NUMBER = {1},
     PAGES = {195--220},
      ISSN = {0006-3835,1572-9125},
        DOI = {10.1007/s10543-021-00872-1},
       URL = {https://doi.org/10.1007/s10543-021-00872-1},
}

@article{osinsky2026subsetselectionmatricescolumn,
      title={Subset selection for matrices by column exchange}, 
      author={Osinsky, Alexander  and Kozyrev, Ivan },
      year={2026},
      journal={arXiv preprint arXiv:2604.14418}, 
}

@article {nemhauser1978best,
    AUTHOR = {Nemhauser, G. L. and Wolsey, L. A.},
     TITLE = {Best algorithms for approximating the maximum of a submodular
              set function},
   JOURNAL = {Math. Oper. Res.},
    VOLUME = {3},
      YEAR = {1978},
    NUMBER = {3},
     PAGES = {177--188},
      ISSN = {0364-765X,1526-5471},
       DOI = {10.1287/moor.3.3.177},
}

@incollection{goreinov2010find,
  title={How to find a good submatrix},
  author={Goreinov, Sergei A and Oseledets, Ivan V and Savostyanov, Dimitry V and Tyrtyshnikov, Eugene E and Zamarashkin, Nikolay L},
  booktitle={Matrix Methods: Theory, Algorithms And Applications: Dedicated to the Memory of Gene Golub},
  pages={247--256},
  year={2010},
  publisher={World Scientific}
}

@article{grigori2025randomized,
  title={Randomized strong rank-revealing {QR} for column subset selection and low-rank matrix approximation},
  author={Grigori, Laura and Xue, Zhipeng},
  journal={arXiv preprint arXiv:2503.18496},
  year={2025}
}

@inproceedings{Mirza2013,
 author = {Mirzasoleiman, B. and Karbasi, A. and Sarkar, R. and Krause, A.},
 booktitle = {Advances in Neural Information Processing Systems},
 editor = {C.J. Burges and L. Bottou and M. Welling and Z. Ghahramani and K. Weinberger},
 pages = {},
 publisher = {Curran Associates, Inc.},
 title = {Distributed Submodular Maximization: Identifying Representative Elements in Massive Data},
 url = {https://proceedings.neurips.cc/paper_files/paper/2013/file/84d2004bf28a2095230e8e14993d398d-Paper.pdf},
 volume = {26},
 year = {2013}
}

@inproceedings{ChenZhang2018,
author = {Chen, L. and Zhang, G. and Zhou, H.},
title = {Fast greedy {MAP} inference for determinantal point process to improve recommendation diversity},
year = {2018},
publisher = {Curran Associates Inc.},
address = {Red Hook, NY, USA},
booktitle = {Proceedings of the 32nd International Conference on Neural Information Processing Systems},
pages = {5627–5638},
numpages = {12},
location = {Montr\'{e}al, Canada},
series = {NIPS'18}
}

@incollection {Krause2014,
    AUTHOR = {Krause, A. and Golovin, D.},
     TITLE = {Submodular function maximization},
 BOOKTITLE = {Tractability},
     PAGES = {71--104},
 PUBLISHER = {Cambridge Univ. Press, Cambridge},
      YEAR = {2014},
      ISBN = {978-1-107-02519-6},
}

@article {Harb2012,
    AUTHOR = {Harbrecht, H. and Peters, M. and Schneider, R.},
     TITLE = {On the low-rank approximation by the pivoted {C}holesky
              decomposition},
   JOURNAL = {Appl. Numer. Math.},
    VOLUME = {62},
      YEAR = {2012},
    NUMBER = {4},
     PAGES = {428--440},
      ISSN = {0168-9274,1873-5460},
         DOI = {10.1016/j.apnum.2011.10.001},
       URL = {https://doi.org/10.1016/j.apnum.2011.10.001},
}

@article {Beb2000,
    AUTHOR = {Bebendorf, M.},
     TITLE = {Approximation of boundary element matrices},
   JOURNAL = {Numer. Math.},
    VOLUME = {86},
      YEAR = {2000},
    NUMBER = {4},
     PAGES = {565--589},
      ISSN = {0029-599X,0945-3245},
       DOI = {10.1007/PL00005410},
       URL = {https://doi.org/10.1007/PL00005410},
}

@article {Osinsky23,
    AUTHOR = {Osinsky, A.},
     TITLE = {Volume-based subset selection},
   JOURNAL = {Numer. Linear Algebra Appl.},
     VOLUME = {31},
      YEAR = {2024},
    NUMBER = {1},
     PAGES = {Paper No. e2525, 14},
      ISSN = {1070-5325,1099-1506},
   MRCLASS = {65F20 (65F55)},
  MRNUMBER = {4677350},
       DOI = {10.1002/nla.2525},
       URL = {https://doi.org/10.1002/nla.2525},
}

@article {CKM2020,
    AUTHOR = {Cortinovis, A. and Kressner, D. and Massei, S.},
     TITLE = {On maximum volume submatrices and cross approximation for
              symmetric semidefinite and diagonally dominant matrices},
   JOURNAL = {Linear Algebra Appl.},
      VOLUME = {593},
      YEAR = {2020},
     PAGES = {251--268},
      ISSN = {0024-3795,1873-1856},
       DOI = {10.1016/j.laa.2020.02.010},
       URL = {https://doi.org/10.1016/j.laa.2020.02.010},
}

@article {Papa84,
    AUTHOR = {Papadimitriou, Christos H.},
     TITLE = {The largest subdeterminant of a matrix},
   JOURNAL = {Bull. Soc. Math. Gr\`ece (N.S.)},
      VOLUME = {25},
      YEAR = {1984},
     PAGES = {95--105},
      ISSN = {0373-1391},
  }

@article{Ben1992,
    AUTHOR = {Ben-Israel, A.},
     TITLE = {A volume associated with {$m\times n$} matrices},
      NOTE = {Sixth Haifa Conference on Matrix Theory (Haifa, 1990)},
   JOURNAL = {Linear Algebra Appl.},
     VOLUME = {167},
      YEAR = {1992},
     PAGES = {87--111},
      ISSN = {0024-3795,1873-1856},
         DOI = {10.1016/0024-3795(92)90340-G},
       URL = {https://doi.org/10.1016/0024-3795(92)90340-G},
}

@incollection {Bodek22,
    AUTHOR = {Bodek, K. and Feldman, M.},
     TITLE = {Maximizing sums of non-monotone submodular and linear
              functions: understanding the unconstrained case},
 BOOKTITLE = {30th annual {E}uropean {S}ymposium on {A}lgorithms},
    SERIES = {LIPIcs. Leibniz Int. Proc. Inform.},
    VOLUME = {244},
     PAGES = {Art. No. 23, 17},
 PUBLISHER = {Schloss Dagstuhl. Leibniz-Zent. Inform., Wadern},
      YEAR = {2022},
      ISBN = {978-3-95977-247-1},
          DOI = {10.4230/lipics.esa.2022.23},
       URL = {https://doi.org/10.4230/lipics.esa.2022.23},
}

@inproceedings {Buch14,
    AUTHOR = {Buchbinder, N. and Feldman, M. and Naor, J. and
              Schwartz, R.},
     TITLE = {Submodular maximization with cardinality constraints},
 BOOKTITLE = {Proceedings of the {T}wenty-{F}ifth {A}nnual {ACM}-{SIAM}
              {S}ymposium on {D}iscrete {A}lgorithms},
     PAGES = {1433--1452},
 PUBLISHER = {ACM, New York},
      YEAR = {2014},
      ISBN = {978-1-611973-38-9},
         DOI = {10.1137/1.9781611973402.106},
       URL = {https://doi.org/10.1137/1.9781611973402.106},
}

@inproceedings {Buch25,
    AUTHOR = {Buchbinder, N. and Feldman, M.},
     TITLE = {Extending the extension: deterministic algorithm for
              non-monotone submodular maximization},
 BOOKTITLE = {S{TOC}'25---{P}roceedings of the 57th {A}nnual {ACM}
              {S}ymposium on {T}heory of {C}omputing},
     PAGES = {1130--1141},
 PUBLISHER = {ACM, New York},
      YEAR = {2025},
      ISBN = {979-8-4007-1510-5},
   MRCLASS = {68R05},
  MRNUMBER = {4928504},
       DOI = {10.1145/3717823.3718120},
       URL = {https://doi.org/10.1145/3717823.3718120},
}

@inproceedings{Hemmi22,
author = {Hemmi, S. and Oki, T. and Sakaue, S. and Fujii, K. and Iwata, S.},
title = {Lazy and fast greedy {MAP} inference for determinantal point process},
year = {2022},
isbn = {9781713871088},
publisher = {Curran Associates Inc.},
address = {Red Hook, NY, USA},
booktitle = {Proceedings of the 36th International Conference on Neural Information Processing Systems},
articleno = {201},
numpages = {14},
location = {New Orleans, LA, USA},
series = {NIPS '22}
}

@inproceedings{Gillen12,
 author = {Gillenwater, Jennifer and Kulesza, Alex and Taskar, Ben},
 booktitle = {Advances in Neural Information Processing Systems},
 editor = {F. Pereira and C.J. Burges and L. Bottou and K. Weinberger},
 pages = {},
 publisher = {Curran Associates, Inc.},
 title = {Near-Optimal {MAP} Inference for Determinantal Point Processes},
 url = {https://proceedings.neurips.cc/paper_files/paper/2012/file/6c8dba7d0df1c4a79dd07646be9a26c8-Paper.pdf},
 volume = {25},
 year = {2012}
}

@article{GoK11,
author = {Golovin, Daniel and Krause, Andreas},
title = {Adaptive submodularity: theory and applications in active learning and stochastic optimization},
year = {2011},
issue_date = {September 2011},
publisher = {AI Access Foundation},
address = {El Segundo, CA, USA},
volume = {42},
number = {1},
issn = {1076-9757},
journal = {J. Artif. Int. Res.},
month = sep,
pages = {427–486},
numpages = {60}
}

@inproceedings{GoK10,
	author = {Daniel Golovin and Andreas Krause},
	booktitle = {Proc. Conference on Learning Theory (COLT)},
	title = {Adaptive Submodularity: A New Approach to Active Learning and Stochastic Optimization},
	year = {2010}}

@article{MikOs18,
title = {Rectangular maximum-volume submatrices and their applications},
journal = {Linear Algebra Appl.},
volume = {538},
pages = {187-211},
year = {2018},
issn = {0024-3795},
doi = {https://doi.org/10.1016/j.laa.2017.10.014},
author = {A. Mikhalev and I.V. Oseledets},
}

@article{Rob89,
author = {Robertazzi, T. G. and Schwartz, S. C.},
title = {An Accelerated Sequential Algorithm for Producing {D}-Optimal Designs},
journal = {SIAM J. Sci. Stat. Comput.},
volume = {10},
number = {2},
pages = {341-358},
year = {1989},
doi = {10.1137/0910022},
}

@InProceedings{Sakaue20,
title =  {Guarantees of Stochastic Greedy Algorithms for Non-monotone Submodular Maximization with Cardinality Constraints},  
author = {Sakaue, S.},  
booktitle =  {Proceedings of the Twenty Third International Conference on Artificial Intelligence and Statistics},  
pages =  {11--21},  
year =  {2020},  
volume =  {108},  
month =  {26--28 Aug},  
}

@article{Hash2020,
author = {Hashemi, A. and Ghasemi, M. and Vikalo, H. and Topcu, U.},
year = {2020},
month = {03},
pages = {1-1},
title = {Randomized Greedy Sensor Selection: Leveraging Weak Submodularity},
volume = {PP},
journal = {IEEE Transactions on Automatic Control},
doi = {10.1109/TAC.2020.2980924}
}

@inproceedings{Sham2010,
  title = {Greedy sensor selection: Leveraging submodularity},
  author = {M. Shamaiah and S. Banerjee and H. Vikalo},
  year = {2010},
  doi = {10.1109/CDC.2010.5717225},
  url = {http://dx.doi.org/10.1109/CDC.2010.5717225},
   pages = {2572-2577},
  booktitle = {Proceedings of the 49th IEEE Conference on Decision and Control, CDC 2010, December 15-17, 2010, Atlanta, Georgia, USA},
  publisher = {IEEE},
}

@article{Cali11,
author = {Calinescu, G. and Chekuri, C. and P\'{a}l, M. and Vondr\'{a}k, J.},
title = {Maximizing a Monotone Submodular Function Subject to a Matroid Constraint},
journal = {SIAM J. Comput.},
volume = {40},
number = {6},
pages = {1740-1766},
year = {2011},
doi = {10.1137/080733991},
}

@article{Zhang2016,
  author={Zhang, Zhenliang and Chong, Edwin K. P. and Pezeshki, Ali and Moran, William},
  journal={IEEE Transactions on Automatic Control}, 
  title={String Submodular Functions With Curvature Constraints}, 
  year={2016},
  volume={61},
  number={3},
  pages={601-616},
  doi={10.1109/TAC.2015.2440566}}

@techreport{Street2007,
author = "Streeter, M. and Golovin, D.",
title = "An online algorithm for maximizing submodular functions",
number = "CMU-CS-07-171", 
institution = "Carnegie Mellon University", 
year = "2007"
}
